\def\phi{\varphi }
\theoremstyle{plain}
\newtheorem{theorem}{Theorem}[section]
\newtheorem{corollary}[theorem]{Corollary}
\newtheorem{lemma}[theorem]{Lemma}
\newtheorem{proposition}[theorem]{Proposition}
\theoremstyle{definition}
\newtheorem{remark}[theorem]{Remark}
\newtheorem{remarks}[theorem]{Remarks}
\theoremstyle{remark}
\newtheorem{Bessel}[theorem]{Multivariate Bessel functions}
\numberwithin{equation}{section}
\begin{document}
\title{Integral representation  and sharp asymptotic results for some
  Heckman-Opdam hypergeometric functions of type BC}

\author{Margit R\"osler\\Institut f\"ur Mathematik, Universit\"at Paderborn\\
Warburger Strasse 100,
D-33098 Paderborn\\
and\\
Michael Voit\\
Fakult\"at Mathematik, Technische Universit\"at Dortmund\\
          Vogelpothsweg 87,
           D-44221 Dortmund, Germany\\
e-mail: michael.voit@math.uni-dortmund.de}
\date{\today}

\maketitle

\begin{abstract}
 The Heckman-Opdam hypergeometric functions  of type BC 
extend classical Jacobi functions in one variable and include
  the spherical functions  of non-compact  Grassmann manifolds 
 over the real, complex or quaternionic numbers.
There are various limit transitions known for such hypergeometric functions, see e.g. \cite{dJ}, \cite{RKV}. 
 In the present paper,
we use an explicit form of the Harish-Chandra integral representation as well as
an interpolated variant,
in order to obtain limit results for three continuous classes of hypergeometric functions of type BC 
which are distinguished by explicit, sharp and uniform error bounds. 
  The first limit realizes the approximation of the spherical functions of infinite dimensional 
Grassmannians of fixed rank; here  hypergeometric
  functions of type A appear as limits. The second limit is a contraction limit towards
  Bessel functions of Dunkl type.
\end{abstract}

\smallskip
\noindent
Key words: Hypergeometric functions associated with root systems, Grassmann
manifolds, spherical functions, Harish-Chandra integral, asymptotic analysis,
Bessel functions related to Dunkl operators.

\noindent
AMS subject classification (2000): 33C67, 43A90, 43A62, 33C80.


\section{Introduction}

The theory of hypergeometric functions associated with root systems provides a
framework which generalizes the classical theory of spherical functions on
Riemannian symmetric spaces; see \cite{H},
\cite{HS} and \cite{O1} for the general theory,
as well as \cite {Sch} and \cite{NPP} for some more recent developments. Here
we consider the non-compact Grassmannians
$\mathcal G_{p,q}(\mathbb F)=G/K$ over one
of the (skew-)
fields $\mathbb F= \mathbb R, \mathbb C, \mathbb H$, 
where $G$
is one of the indefinite
orthogonal, unitary or symplectic  groups
$ SO_0(q,p),\, SU(q,p)$  or $Sp(q,p)$ with  $p>q$, and $K$ is the maximal
compact subgroup  $K=
SO(q)\times SO(p), \, S(U(q)\times U(p))$ or $Sp(q)\times Sp(p),$ respectively.
The real rank of $G/K$ is $q$, and the restricted root
system $\Delta(\frak g, \frak a)$ is of type $BC$.
Let $F_{BC}(\lambda, k;t)$ denote the Heckman-Opdam hypergeometric function
associated with the root system
\[ R = 2\cdot BC_q = \{ \pm 2e_i, \pm 4 e_i, \pm 2e_i \pm 2e_j: 1\leq i < j \leq
q\} \subset \mathbb R^q,\]
with spectral variable $\lambda \in \mathbb C^q$  and multiplicity parameter
$k$. The spherical functions of $G/K=\mathcal G_{p,q}(\mathbb F),$ which
are
$K$-biinvariant as functions on $G$, are then given by
\[ \phi_\lambda^p(a_t)=F_{BC}(i\lambda,k_p;t)
\quad\quad (t\in \mathbb R^q)\]
with $\lambda \in \mathbb C^q $ and  multiplicity
$$k_p=(d(p-q)/2, (d-1)/2, d/2)$$
corresponding to the roots $\pm 2 e_i$,  $\pm 4 e_i$ and  $2(\pm  e_i\pm
e_j)$ respectively; here $d\in\{1,2,4\}$ denotes the dimension of
  $\mathbb
R,\mathbb C,\mathbb H$ over $\mathbb R$; see \cite{R2} and Remark 2.3.
of \cite{H}. 
In \cite{R2}, the product formula for spherical functions,
\[\varphi(g) \varphi(h) = \int_k \varphi(gkh)dk \quad (g,h\in G), \]
was made explicit in such a way that it could be extended to a
product formula
for the hypergeometric function $F_{BC}$ with mulitplicity $k_p$
corresponding to arbitrary real parameters $p> 2q-1$. This led to three
continuous series of positive product formulas for
$F_{BC}$
corresponding to  $\mathbb F=\mathbb R, \mathbb C, \mathbb H$ as well as
associated
commutative, probability-preserving convolution algebras of measures
(hypergroups in the sense of \cite{J}) on the $BC_q$-Weyl chamber
\[ C_q = \{ t= (t_1, \ldots, t_q) \in \mathbb R^q: t_1 \geq \ldots \geq t_q
\geq 0\}.\]
On the other hand, the spherical functions of $G/K$
have the Harish-Chandra integral representation
\[\phi_\lambda^p(a_t) =\int_K e^{(i\lambda-\rho)(H(a_tk))} dk,
\quad \lambda
\in \mathfrak a_\mathbb C^*\cong \mathbb C^q, \]
see \cite{Hel} or \cite{GV} for the general theory and Section \ref{spherical}
for
details in our particular case.
The Harish-Chandra integral was made explicit by Sawyer \cite{Sa} for the
real Grassmannians
$\mathcal G_{p,q}(\mathbb R)$. In the present paper, we extend
Sawyer's  representation to general $\mathbb F$ and further reduce it to a form
which allows an extension  from the spherical case with
integers $p\geq 2q$ to a positive integral representation for the three
classes of hypergeometric functions $F_{BC}$ as above, with arbitrary real
parameters $p> 2q-1$, the rank $q$ being fixed. This (in part)
generalizes
the well-known integral representation of Jacobi functions, 
which are the hypergeometric functions of type $BC$ in rank one (see \cite{K1}).
We also give an analogous integral representation for the corresponding
Heckman-Opdam polynomials.

Our integral representation (Theorem \ref{int-rep}) for  the  spherical functions of 
 $\mathcal G_{p,q}(\mathbb F)$ is closely related to
those for the  spherical functions of the type $A$ symmetric spaces
 $GL(q,\mathbb F)/U(q,\mathbb F)$. In particular, we obtain immediately that for
$p\to\infty$,
the spherical functions of $\mathcal G_{p,q}(\mathbb F)$
  tend to the  spherical functions of 
 $GL(q,\mathbb F)/U(q,\mathbb F)$, a result which was proven
 recently by completely different methods and in more generality in \cite{RKV};
see also the note
\cite{K2} for the polynomial case.

As a main result of the present paper, we
 shall deduce from our explicit integral representation a result on the rate of convergence (Theorem \ref{main-p-infty}):
the convergence of the
bounded hypergeometric functions $F_{BC}$, with multiplicities depending on
$p$ as above,
 is of order $p^{-1/2}$ for $p\to\infty$, uniformly on the chamber
$C_q$ and
locally uniformly in the spectral variable. Moreover, a corresponding result
is obtained in the unbounded case. It seems that these results cannot be
obtained by the methods  of \cite{RKV}. Corresponding results
for  $q=1$, i.e., for Jacobi functions, can be found in
\cite{V2}.
 We  also mention
that our convergence results are related to further limits, e.g.,
 to limits in \cite{D} and \cite{SK} for multivariate polynomials
as well as to the convergence of (multivariable) Bessel functions of type B to
those of
type A and related results for matrix Bessel functions in \cite{RV2},
\cite{RV3}.
We point out that our convergence results with error bounds may serve as a basis
to derive central limit theorems for random walks on the Grassmannians 
  $\mathcal G_{p,q}(\mathbb F)$ when for fixed rank $q$, the time parameter of
the random walks as well as the
dimension parameter $p$ tend to infinity in a coupled way. For results in this
direction we refer to \cite{RV3},  \cite{V2}.

In generalization of the contraction principle for Riemannian symmetric spaces, Heckman-Opdam
hypergeometric functions can be approximated
for small space
variables and large spectral parameters by corresponding
Bessel functions of Dunkl type. This was first proven in  \cite{dJ} by an asymptotic analysis of the 
Cherednik system; see also \cite{RV1}. In the present paper, we shall use 
the integral representation of Theorem \ref{int-rep} in order to obtain this approximation in our series of $BC$-cases 
(which include the spherical functions on Grassmannians), again with an explicit  error estimate.
 For the case $q=1$ and 
the use  of the  error estimate in the proof of central limit theorems we refer to
\cite{V2} and references cited there.

We finally mention that 
the Harish-Chandra integral in
Proposition 5.4.1 of \cite{HS} for the $K$-spherical functions of the
 symmetric spaces $U(p,q)/(U(p)\times SU(q))$ over $\mathbb C$ may be used to
 derive an explicit integral representation for Heckman-Opdam hypergeometric
 functions of type BC for
 a different class of parameters as considered here. For such cases, 
associated convolution structures have been derived in \cite{V3}.

\medskip

The organization of this paper is as follows:  In
Section 2 we 
treat the Harish-Chandra integral representation for the
spherical functions of  $\mathcal G_{p,q}(\mathbb F)$ as well as
 for the associated three continuous series of Heckman-Opdam 
hypergeometric functions. In Section 3 we deduce the convergence of the
spherical functions of $\mathcal G_{p,q}(\mathbb
F)$ to those of
 $GL(q,\mathbb F)/U(q,\mathbb F)$ as $p\to\infty$. Section 4 is then devoted
 to precise estimates for the rate $p^{-1/2}$  of convergence. In particular,
 in order to obtain a uniform rate for $t\in C_q$, we need a technical result 
on the convex hull of Weyl group orbits of the half sum $\rho$ of roots which
will be proven
separately in an appendix (Section 6). The quantitative contraction estimates between  hypergeometric
functions of type BC and Bessel functions of type $B$ will be presented in Section 5.

\section{An integral representation for spherical functions on Grassmann
  manifolds and hypergeometric functions of type BC}\label{spherical}

In this section, we extend Sawyer's (\cite{Sa}) integral representation for
spherical functions on real Grassmannians and deduce an explicit integral
representation (Theorem \ref{int-rep}) for three continuous series for hypergeometric functions of type
$BC.$

Let $\mathbb F$ be one of the (skew-) fields $\mathbb R, \mathbb C, \mathbb H$
and $d= \text{dim}_{\mathbb R} \mathbb F \in \{1,2,4\}.$
On $\mathbb F,$ we have
the standard involution
 $x\mapsto \overline x$ and norm $|x| = (\overline x x)^{1/2}$. By
$M_{q,p}(\mathbb F)$ we denote the set of $q\times p$ matrices over $\mathbb
F$, also viewed as $\mathbb F$-linear transformations from $\mathbb F^p$ to 
$\mathbb F^q,$ which are considered as right $\mathbb F$-vector spaces. We
write $M_q(\mathbb F) = M_{q,q}(\mathbb F).$

We consider the Grassmannians $G/K=\mathcal G_{p,q}(\mathbb F)$ where $G$ is
one of the groups
$ SO_0(p,q), SU(p,q)$ or $Sp(p,q),$ and $K$
 is the  maximal  compact subgroup 
$K=SO(p)\times SO(q), \, S(U(p)\times U(q)) , \, Sp(p)\times Sp(q),$
respectively. Note that $G$ is the identity component of 
$SU(q,p;\mathbb F)$, where $U(q,p;\mathbb F)$ is the isometry group for the
quadratic form
\[ |x_1|^2 + \ldots +|x_q|^2 - |x_{q+1}|^2 -\ldots - |x_{p+q}|^2\]
on $ \mathbb F^{p+q}$.  
In the same way,
$K$ is a
subgroup of $U(q,\mathbb F)\times
U(p,\mathbb F)$ where \[U(q,\mathbb F) = \{X\in M_q(\mathbb F): X^* X = I_q\}\]
is the unitary group
over  $\mathbb F$; here $X^* = \overline X^t$ denotes the conjugate transpose.
The Lie algebra $\mathfrak g$ of $G$ consists of the  matrices 
\[ X= \begin{pmatrix} A & B\\
       B^* & D \end{pmatrix}\,\in M_{p+q}(\mathbb F)\]
      with blocks $A=-A^*\in M_q(\mathbb F)$ and $D=-D^*\in
M_p(\mathbb F)$ satisfying $\,trA + tr D=0, $ as well as $B\in M_{q,p}(\mathbb
F).$ 
 Let $\mathfrak k$ be the Lie algebra of $K$ and $\,\mathfrak g = \mathfrak k
\oplus \mathfrak
p\,$
the associated Cartan decomposition of $\mathfrak g$, with $\mathfrak p$
consisting of
the $(q,p)$-block matrices
 \[ X= \begin{pmatrix} 0 & X\\
        X^* & 0 \end{pmatrix}, \quad X\in M_{q,p}(\mathbb F).\]
        In accordance with \cite{Sa}, we use as a maximal abelian subspace
$\mathfrak a$ of $\mathfrak p$  the set of matrices
\[ H_t = \begin{pmatrix} 0_{q\times q} & \underline t & 0_{q\times(p-q)}\\
                           \underline t & 0_{q\times q} & 0 _{q\times(p-q)}\\
                            0_{(p-q)\times q} & 0_{(p-q)\times q}
&0_{(p-q)\times (p-q)} \end{pmatrix} \]       
where $\,\underline t = \text{diag}(t_1, \ldots , t_q) $ is the diagonal matrix
corresponding
to $t=(t_1, \ldots , t_q) \in \mathbb R^q.$ 
We remark that our present notions are adjusted to those of \cite{Sa}
(with $p$ and $q$ exchanged), and are slightly different from those
used in
\cite{R2}. 

The restricted root system $\Sigma=\Sigma(\mathfrak g,\mathfrak a)$ of
$\mathfrak g$
 with respect to $\mathfrak a$ consists of the non-zero linear functionals 
$\alpha\in \mathfrak a^*$ such that
\[ \mathfrak g_\alpha = 
\{ X \in \mathfrak g: [H,X ]= \alpha(H)X \,\,  \forall\, H\in \mathfrak
a\}\not=\{0\}.\]
In our case, the root system is of type $B_q$ if $\mathbb F=\mathbb R$ and of
type $BC_q$ if $\mathbb F=\mathbb C$ or $\mathbb H.$ 
The multiplicities $m_\alpha = \dim_{\mathbb R}\mathfrak g_\alpha$ can be found
e.g. in table 9 of \cite{OV}. We shall need an explicit description of the root
spaces.
For this, define $f_i\in \mathfrak a^*$ by $\, f_i(H_t) = t_i, \, i = 1, \ldots,
q.$ 
We shall write matrices from $\mathfrak g$ in $(q,q,p-q)$-block form.
 By $E_{ij}$ we denote a matrix of appropriate size 
which has entries $0$ except position $(i,j)$, where the entry is $1$. Notice
that $E_{ij}\cdot \lambda = \lambda \cdot E_{ij}$ for $\lambda \in \mathbb
F.$ The following
list of roots is easily verified by block multiplications; in the real case, it
matches Theorem 5 of \cite{Sa}.

\begin{enumerate}
 \item[\rm{(1)}] $\alpha =\pm f_i, \, \,1\leq i\leq q$. The root space
$\mathfrak g_\alpha$ is given by  $\, \mathfrak g_\alpha  = 
\{ X_{ir}^{\pm}(\lambda): \, \lambda\in \mathbb F, \, r= 1, \ldots, p-q\}\,$
with       
   \[ X_{ir}^{\pm}(\lambda)  = \begin{pmatrix} 0  &  0 & \lambda E_{ir}\\
0  & 0  & \pm \lambda E_{ir}\\
\overline\lambda E_{ri}  & \mp\overline\lambda E_{ri}  & 0
\end{pmatrix}.\]
The multiplicity of $\alpha$ is $\,m_\alpha=d(p-q).$
\item[\rm{(2)}] $\alpha = \pm(f_i-f_j), \,\, 1\leq i < j\leq q.$ In this case, 
 $\, \mathfrak g_\alpha  = 
\{ Y_{ij}^{\pm}(\lambda): \, \lambda\in \mathbb F\}\,$ with       
   \[ Y_{ij}^{\pm}(\lambda)  = \begin{pmatrix} \pm(\lambda E_{ij}-\overline
\lambda E_{ji}) &  \lambda E_{ij} +\overline \lambda E_{ji} & 0\,\\
\lambda E_{ij} + \overline \lambda E_{ji}  &   \pm(\lambda E_{ij}-\overline
\lambda E_{ji})  & 0\, \\
0 & 0 & 0\,
\end{pmatrix}.\]
The multiplicity is $\,m_\alpha=d.$
\item[\rm{(3)}]$\alpha = \pm(f_i+f_j), \,\, 1\leq i < j\leq q.$
Here  $\, \mathfrak g_\alpha  = 
\{ Z_{ij}^{\pm}(\lambda): \, \lambda\in \mathbb F\}\,$ with       
   \[ Z_{ij}^{\pm}(\lambda)  = \begin{pmatrix} \pm(\lambda
E_{ij}-\overline\lambda E_{ji}) & -\lambda E_{ij} + \overline \lambda E_{ji} &
0\,\\
-\overline \lambda E_{ji} + \lambda E_{ij} & \pm (\overline \lambda E_{ji} -
\lambda E_{ij}) & 0\, \\
0 & 0 & 0 \,
\end{pmatrix}.\]
Again, the multiplicity is $\,m_\alpha=d.$
\item[\rm{(4)}] $\alpha=\pm 2f_i\, , \,\, 1\leq i\leq q.$ This family of roots
occurs
only for $\mathbb F=\mathbb C,\mathbb H.$ The root spaces are given by
 $\mathfrak g_\alpha= \{\lambda\cdot W_i^{\pm}: \lambda\in \mathbb F,
\overline\lambda=-\lambda\}\,$ with
\[ W_i^\pm = \begin{pmatrix}
        E_{ii} & 0 & \mp E_{ii}\\
0 & 0 & 0\\
\pm E_{ii} & 0 & -E_{ii}      
\end{pmatrix}.\]
In order to obtain a unified notion, we consider $\alpha=\pm 2f_i$ also a root
if $\mathbb F=\mathbb R$, with multiplicity zero. Then $m_\alpha= d-1$ for
$\mathbb F=\mathbb R, \mathbb C, \mathbb H.$ 
\end{enumerate}
In our unified notion, $\Sigma$ is of type $BC_q$ in all cases, with the
understanding that $0$ may occur as a multiplicity on the long roots. 
As usual, we choose the positive subsystem 
\[\Sigma_+ = \{f_i, 2f_i, 1\leq i \leq q\}\cup\{ f_i\pm f_j, \,1\leq  i<j \leq
q\}\]
 Then the weighted half sum of positive roots is 
\begin{equation}\label{rho-BC}
 \rho^{BC} =  \rho^{BC}(p) =\frac{1}{2}\sum_{\alpha\in \Sigma_+}
m_\alpha\alpha \,=\,
 \sum_{i=1}^q \bigl( \frac{d}{2}(p+q+2-2i) -1 \bigr)f_i\,.
\end{equation}
Let
\[\mathfrak n = \sum_{\alpha\in \Sigma_+} \mathfrak g_\alpha\]
and $N=\exp \mathfrak n, \, A= \exp \mathfrak a.$ Then $A$ is abelian, $N$ is
nilpotent, and $G=KAN$ is an Iwasawa decomposition of $G$.
The spherical functions of $G/K$ are given by the Harish-Chandra integral
formula
\begin{equation}\label{harish1} 
\phi_\lambda^p(a_t) =\int_K e^{(i\lambda-\rho^{BC})(H(a_tk))} dk,
\quad \lambda
\in \mathfrak a_\mathbb C^*\end{equation}
where $H(g)\in A$ denotes the unique abelian part of
$g\in
G$ in the Iwasawa decomposition $G=KAN$ 
(see e.g. \cite{GV}), and 
\begin{equation}\label{a_t}
a_t= \exp(H_t) =\begin{pmatrix} \cosh\underline t & \sinh \underline t & 0 \\
          \sinh  \underline t & \cosh  \underline t & 0 \\
   0 & 0 & I_{p-q}
       \end{pmatrix}  \end{equation}
with $  \cosh\underline t = \text{diag}(\cosh t_1,
\ldots, \cosh t_q), \, \sinh \underline t = \text{diag}(\cosh t_1,
\ldots, \cosh t_q)$.

\smallskip\noindent
We shall identify $\mathfrak a_\mathbb C^*$ with $\mathbb C^q$ via
 $\, \lambda \mapsto (\lambda_1, \ldots, \lambda_q)$ for $\lambda\in \mathfrak
a_\mathbb C^*$
 given by 
$\lambda(H_t) = \sum_{r=1} ^q\lambda_r t_r\,,$ $\lambda_r\in \mathbb C.$

\medskip
In order to state a more explicit form of the Harish-Chandra integral above,
  we need some further notation.
For a square matrix $A= (a_{ij})$ over $\mathbb F$ 
 we denote by $\Delta_r(A) = \det ((a_{ij})_{1\leq i,j\leq r})\,$ 
the $r$-th principal minor of $A$. Here, for $\mathbb F= \mathbb H, $
 the determinant is understood in the sense of Dieudonn\'{e}, i.e. 
$\det(A) = (\det_{\mathbb C} (A))^{1/2}$, when $X$ is considered as a complex
matrix.

We introduce the usual power functions on the
cone 
\[ \Omega_q = \{ x\in M_q(\mathbb F): x= x^*, x \text{ strictly positive
definite}\},\]
(c.f. \cite{FK}), Chap.VII.1.): For $\lambda \in \mathbb C^q\cong \mathfrak
a_\mathbb C^*$ and $x\in \Omega_q,$ we define
\begin{equation}\label{power-function}
 \Delta_\lambda(x) = \Delta_1(x)^{\lambda_1-\lambda_2} \cdot \ldots \cdot
\Delta_{q-1}(x)^{\lambda_{q-1}-\lambda_q}\cdot
\Delta_q(x)^{\lambda_{q}}.
\end{equation}
We also define the projection matrix
$$\sigma_0 :=
\begin{pmatrix} I_q  \\ 0_{(p-q)\times q}\end{pmatrix} \in M_{p,q}(\mathbb F).$$

The following result generalizes Theorem 16 of \cite{Sa}.

\begin{theorem}\label{first-int-rep}
For the Grassmannian $\mathcal G_{p,q}(\mathbb F),$ the spherical functions
\eqref{harish1} are given by 
\[ \varphi_\lambda^{p}(a_t) = \int_{K} \Delta_{(i\lambda-\rho^{BC})/2}(x_t(k))
dk, \,\,\lambda\in \mathbb C^q  \]
where  for $\, k = \begin{pmatrix} u & 0\\
                                                         0 & v
                                                        \end{pmatrix} \in K$ 
with $\,u\in U(q,\mathbb F), v\in U(p, \mathbb F),$
  \[x_t(k): =  (\cosh\underline t \, u + \sinh \underline t \, 
\sigma_0^*v\sigma_0)^*(\cosh\underline t \, u + \sinh \underline t \, 
\sigma_0^*v\sigma_0)\in  \Omega_q\,.\]
\end{theorem}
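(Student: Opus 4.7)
The starting point is the Harish-Chandra formula \eqref{harish1}; it suffices to establish the pointwise identity
\[
e^{\mu(H(a_t k))} \,=\, \Delta_{\mu/2}(x_t(k))
\]
for every $k \in K$ and $\mu \in \mathfrak{a}_\mathbb{C}^*$. Introducing the fundamental weights $\omega_r := f_1 + \ldots + f_r$ and writing $\mu = \sum_{r=1}^q (\mu_r - \mu_{r+1})\,\omega_r$ with $\mu_{q+1} := 0$, the definition \eqref{power-function} gives $\Delta_{\mu/2}(x) = \prod_r \Delta_r(x)^{(\mu_r - \mu_{r+1})/2}$, whereas the left-hand side equals $\prod_r e^{\omega_r(H(a_t k))(\mu_r - \mu_{r+1})}$. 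Matching factor-by-factor, the identity reduces to verifying, for each $r = 1, \ldots, q$,
\[
e^{2\omega_r(H(a_t k))} \,=\, \Delta_r(x_t(k)). \qquad (\star)
\]

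To prove $(\star)$, I would work in the defining representation of $G$ on $V = \mathbb{F}^{p+q}$ (standard basis $e_1, \ldots, e_{p+q}$) and introduce the vectors $w_i := e_i + e_{q+i}$ for $i = 1, \ldots, q$. A direct computation with the explicit form of $H_t$ gives $H_t w_i = t_i w_i$, so $w_i$ is an $\mathfrak{a}$-weight vector of weight $f_i$ and $a_t w_i = e^{t_i} w_i$. Let $\Omega \in M_{p+q,q}(\mathbb F)$ be the matrix with columns $w_1, \ldots, w_q$. A block-matrix computation in the $(q, q, p-q)$-decomposition yields
\[
M \,:=\, a_t k\,\Omega \,=\, \begin{pmatrix} \cosh\underline t\, u + \sinh\underline t\, v_{11} \\ \sinh\underline t\, u + \cosh\underline t\, v_{11} \\ v_{21} \end{pmatrix},
\]
where $v_{11} = \sigma_0^* v \sigma_0$ and $v_{21}$ are the natural blocks of $v$. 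Using $\cosh^2\underline t - \sinh^2\underline t = I_q$, the unitarity $u^*u = I_q$, and the relation $v_{11}^* v_{11} + v_{21}^* v_{21} = I_q$ coming from $v^*v = I_p$, the three summands of $M^*M$ collapse to $M^*M = 2\, x_t(k)$.

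Next, write the Iwasawa decomposition $a_t k = k' a_H n$ with $H = H_s := H(a_t k)$, so that $M = k' a_H n\, \Omega$. The key algebraic input is that $n\, \Omega = \Omega\, L$ for some upper unitriangular $L \in GL(q, \mathbb F)$. Indeed, the $\mathfrak a$-weights of $V$ are exactly $\pm f_i$ ($1 \leq i \leq q$) and $0$; elements of $\mathfrak n$ raise the $\mathfrak a$-weight by positive roots $\alpha \in \Sigma_+$; and $f_j + \alpha$ is again a weight of $V$ only for $\alpha = f_i - f_j$ with some $i < j$, in which case the image has weight $f_i$ and hence lies in $\mathbb{F} w_i$. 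Since $a_H w_i = e^{s_i} w_i$, this gives $M = k'\,\Omega\,D\,L$ with $D = \mathrm{diag}(e^{s_1}, \ldots, e^{s_q})$; combining with $\Omega^*\Omega = 2 I_q$ and $(k')^* k' = I_{p+q}$ yields
\[
x_t(k) \,=\, \tfrac12\, M^*M \,=\, L^* D^2 L.
\]
Since $L$ is upper unitriangular, this is (up to transposition) a Cholesky decomposition of $x_t(k)$, and hence $\Delta_r(x_t(k)) = \prod_{i=1}^r e^{2 s_i} = e^{2\omega_r(H_s)} = e^{2\omega_r(H(a_t k))}$, which establishes $(\star)$.

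The main technical step is the upper-triangularity of the $N$-action in the previous paragraph: it can be read off from the weight-space analysis just given, or verified directly by a case distinction over the four families of positive roots $f_i, 2f_i, f_i \pm f_j$ using the explicit root vectors listed earlier in this section. Once this is in place, the remaining identifications $M^*M = 2\,x_t(k)$ and $\Delta_r(L^*D^2L) = \prod_{i\leq r} e^{2 s_i}$ are straightforward block-matrix calculations.
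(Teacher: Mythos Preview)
Your proof is correct and follows essentially the same route as the paper, which in turn follows Sawyer. Your matrix $\Omega$ is, up to a factor $\sqrt{2}$, precisely the first $q$ columns of the orthogonal matrix $S$ that the paper introduces; the paper verifies directly on each root space that $S^*XS$ is strictly upper triangular for $X\in\mathfrak n$, whereas you obtain the (slightly weaker, but sufficient) statement $n\,\Omega=\Omega L$ by a clean weight-space argument. Both then reduce to the same Cholesky-type identity $\Delta_r(L^*D^2L)=\prod_{i\le r}e^{2s_i}$. Your presentation is a bit more conceptual and avoids tracking the full $(p+q)\times(p+q)$ conjugation, but the underlying mechanism is identical.
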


\begin{proof}
We closely follow \cite{Sa}. Let
\[ S = \frac{1}{\sqrt{2}}\cdot\begin{pmatrix} I_q & 0_{q\times (p-q)} & J_q \\
                                I_q & 0_{q\times (p-q)} & -J_q\\
                                0_{(p-q)\times q} & \sqrt 2 I_{p-q}&
0_{(p-q)\times q}\end{pmatrix}\quad\text{with }\,
J_q=(\delta_{i,q+1-j})_{i,j}\in M_q(\mathbb F).\]
Notice that $S^* S = I_{p+q}.$ 
Using  the explicit form of the root spaces above, one
checks  that $S^*XS$
is strictly upper triangular
for each $X\in \mathfrak n.$ Thus for $n\in N,$ the matrix $S^*nS$ is upper
triangular with entries $1$ in the diagonal. Furthermore,
\[S^* \exp(H_t) S =  \text{diag}(e^{t_1}, \ldots, e^{t_q}, 1, \ldots, 1 ,
e^{-t_q}, 
\ldots, e^{-t_1})\] with $p-q$ entries $1$. 
Consider $\, g = k\,\text{exp}(H_t)n \in KAN$ and let $1\leq r \leq q$.  As in
the proof of Proposition 14 of \cite{Sa}, we calculate the principal minors
\[\Delta_r(S^*g^* g S) = \,
\Delta_r((S^*nS)^* (S^* \exp(2H_t) S) S^*nS) \,=\, e^{2(t_1 + \ldots +
t_r)}.\]
Writing $g= k\,\text{exp}(H_t)n $ in $(q,p)$-block form as
$\,\displaystyle g = \begin{pmatrix} A & B\\ C & D\end{pmatrix},\,$
the upper left $q\times q$-block of $S^*g^* g S\,$ becomes
\[ (A + B\sigma_0)^* (A+B\sigma_0) \quad \text{with } \,\,\sigma_0 =
\begin{pmatrix} I_q  \\ 0_{(p-q)\times q}\end{pmatrix} \in M_{p,q}(\mathbb F).\]
Thus
\begin{equation}\label{Sawyer} t_r = \frac{1}{2}\log\frac{\Delta_r((A +
B\sigma_0)^* (A+B\sigma_0))}{\Delta_{r-1}((A + B\sigma_0)^*
(A+B\sigma_0))},\end{equation}
with the agreement $\Delta_0:=1$. Notice that
this generalizes Proposition 14 of \cite{Sa}, and that the arguments
of $\Delta_r$ and $\Delta_{r-1}$  belong to the cone $\Omega_q$, 
because $gS$  is non-singular.

Now consider $g= a_tk$ with $k= \begin{pmatrix} u & 0 \\ 0 & v\end{pmatrix}\in
K.$
We have
\[ a_tk = \begin{pmatrix} \cosh\underline t & \sinh \underline t & 0 \\
          \sinh  \underline t & \cosh  \underline t & 0 \\
   0 & 0 & I_{p-q}
       \end{pmatrix}\cdot \begin{pmatrix} u & 0 \\ 0 & v\end{pmatrix}\, =\,
\begin{pmatrix} \cosh\underline t \, u \,& \sinh \underline t\,
\sigma_0^* v \,\\
\,\,\ast \,\, & \,\, \ast\,\,\end{pmatrix}.\]
By \eqref{Sawyer}, this gives
\[ e^ {\lambda(H(a_tk))} = \prod_{r=1}^q
\Bigl(\frac{\Delta_r(x_t(k))}{\Delta_{r-1}(x_t(k))}\Bigr)^{\lambda_r/2}\,
     =   \,\Delta_{\lambda/2}(x_t(k)),      \]
which proves the statement.
\end{proof}

For $p\geq 2q$ we may reduce the integral in Theorem \ref{first-int-rep}
by  techniques from \cite{R1}, \cite{R2}. For this, consider
the ball
\[ B_q = \{ w\in M_q(\mathbb F): w^*w < I\},\]
where $A<B$ means that $B-A$ is (strictly) positive definite, 
as well as the probability measure $m_p$ on $B_q$ given by
\begin{equation}\label{measure-mp}
 dm_p(w) = \frac{1}{\kappa_{pd/2}}\cdot\Delta(I-w^*w)^{pd/2-\gamma}dw,
\end{equation}
where
\[ \gamma:= d\bigl(q-\frac{1}{2}\bigr) +1,\]
$\Delta$ denotes the determinant on the open cone $\Omega_q$,
 $dw$ is the Lebesgue
measure on the ball $B_q$, and  
\begin{equation}\label{def-kappa} \kappa_{pd/2}= \int_{B_q}
\Delta(I-w^*w)^{pd/2-\gamma}\> dw.
\end{equation}
Notice that $m_p$ is a probability measure on  $B_q$.

\smallskip
 By $U_0(q, \mathbb F) $ we denote the  identity component of 
$U(q,\mathbb F)$. Notice that $U(q, \mathbb F) = U_0(q, \mathbb F)$ 
for $\mathbb F = \mathbb C, \mathbb H$, while $U_0(q, \mathbb R)=SO(q).$
With these notions, we obtain the following integral representation:

\begin{corollary}\label{int-rep-spherical}
 Let $p\geq 2q$ be an integer. Then the spherical functions  \eqref{harish1}
can be written as
\begin{equation}\label{intrep_BC} \varphi_\lambda^{p}(a_t) =
\int_{U_0(q,\mathbb F)\times B_q} 
\Delta_{(i\lambda-\rho^{BC})/2}(g_t(u,w))\>
dm_p(w)du \end{equation}
where $du$ denotes the normalized Haar measure on $U_0(q)$, and
\[g_t(u,w) =   u^{-1}(\cosh\underline t \, + \sinh \underline t \,
w)^*(\cosh\underline t \,  + \sinh \underline t \, w)u\,.\]
The same formula holds with the argument $g_t(u,w)$ replaced by 
\[\widetilde g_t(u,w) =   u^{-1}(\cosh\underline t \, + \sinh \underline t \,
w)(\cosh\underline t \,  + \sinh \underline t \, w)^*u.\]
\end{corollary}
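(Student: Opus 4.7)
My plan is to derive the corollary from Theorem \ref{first-int-rep} in two natural moves: first push the $v$-part of the $K$-variable forward to its upper-left $q\times q$ block, producing the measure $m_p$; then change variables to convert $x_t(k)$ into $g_t(u,w)$. The alternate form $\widetilde g_t$ will be obtained last by a singular value decomposition argument.

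The key technical input is the reduction of the $v$-integral. Observe that the integrand $\Delta_{(i\lambda-\rho^{BC})/2}(x_t(k))$ in Theorem \ref{first-int-rep} depends on $k=\mathrm{diag}(u,v)$ only through $u$ and the block $w':=\sigma_0^* v\sigma_0\in M_q(\mathbb F)$. For $p\geq 2q$ an integer, a standard matrix-integration fact (established in \cite{R1}, \cite{R2}) identifies the joint push-forward of the normalized Haar measure on $K$ under $(u,v)\mapsto(u,w')$ with the product $du\otimes m_p$ on $U_0(q,\mathbb F)\times B_q$; the density $\Delta(I-w^*w)^{pd/2-\gamma}$ arises as the Jacobian of a QR-type decomposition of the first $q$ columns of $v$, and the condition $p\geq 2q$ ensures integrability.

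Having done this, I will make the substitution $w=w'u^{-1}$ (a bijection of $B_q$) in the inner integral. Since $u$ is unitary, right multiplication by $u$ is an isometry of $M_q(\mathbb F)\cong\mathbb R^{dq^2}$, preserves $B_q$, and preserves the weight $\Delta(I-w^*w)^{pd/2-\gamma}$, because $I-(wu)^*(wu)=u^*(I-w^*w)u$ has the same determinant as $I-w^*w$. Hence $m_p$ is invariant. Moreover the identity $\cosh\underline t\,u+\sinh\underline t\,wu=(\cosh\underline t+\sinh\underline t\,w)u$ converts the argument of the power function into
\[
u^{-1}(\cosh\underline t+\sinh\underline t\,w)^*(\cosh\underline t+\sinh\underline t\,w)u \;=\; g_t(u,w),
\]
yielding the first formula.

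For the $\widetilde g_t$ variant, I will use the singular value decomposition $\cosh\underline t+\sinh\underline t\,w=U_w\Sigma_w V_w^*$ with $U_w, V_w$ unitary over $\mathbb F$ and $\Sigma_w$ diagonal, so that both $g_t(u,w)$ and $\widetilde g_t(u,w)$ take the form $P^*\Sigma_w^2 P$ for some unitary $P$ (namely $V_w^*u$ and $U_w^*u$, respectively). For $\mathbb F=\mathbb C, \mathbb H$, where $U_0(q,\mathbb F)=U(q,\mathbb F)$, left-invariance of Haar measure makes the two $u$-integrals agree. For $\mathbb F=\mathbb R$, the substitutions may map $SO(q)$ onto different components of $O(q)$; but since $\Sigma_w$ is diagonal and hence fixed by conjugation by any sign-diagonal $J\in O(q)$, the integrals of $\Delta_\mu(P^*\Sigma_w^2 P)$ over the two cosets of $SO(q)$ in $O(q)$ coincide. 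The main technical obstacle is the measure factorization in the complex case where $K=S(U(q)\times U(p))$ imposes a determinant constraint coupling $u$ and $v$; but this is precisely what is verified in \cite{R1, R2}, leaving the rest of the argument as direct computation.
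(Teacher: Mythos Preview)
Your proposal is correct and follows essentially the same route as the paper: reduce the $K$-integral to $U_0(q,\mathbb F)\times B_q$ via the push-forward result of \cite{R1,R2}, then substitute $w\mapsto wu$ to reach $g_t(u,w)$, and finally pass to $\widetilde g_t$ using that $a^*a$ and $aa^*$ are unitarily conjugate. The only cosmetic differences are that the paper phrases the last step as ``$a^*a=vaa^*v^*$ for some $v\in U(q,\mathbb F)$'' (handling $\mathbb H$ via the complex embedding $\chi:M_q(\mathbb H)\to M_{2q}(\mathbb C)$ rather than invoking quaternionic SVD directly) and does not explicitly discuss the $SO(q)$ versus $O(q)$ coset issue you raise in the real case---your treatment there is in fact slightly more careful.
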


\begin{proof} 
In a first step, we replace the integral over $K$ in  Theorem
\ref{first-int-rep}
by an integral over $\,U_0(q,\mathbb F) \times U(p, \mathbb F).$ This is
achieved in the same 
way as for the integral (2.5) in \cite{R2}; it is important in this context that
the argument $x_t(k)$ 
depends only on the upper left $q\times q$-block of $v$. 
Lemma 2.1 of \cite{R2} then gives the first formula with the argument 
$\, (\cosh\underline t \, u + \sinh \underline t \, 
w)^*(\cosh\underline t \, u + \sinh \underline t \, w)\,$
 instead of $g_t(u,w)$, which is then obtained by a change of variables
$w\mapsto wu$. 
 
For the proof of the second equation, notice that for  $a:=\cosh t + \sinh t
\cdot w\in M_q(\mathbb F)$,
 the matrices $a^*a$ and $aa^*$ have the same
eigenvalues with the same multiplicities. Therefore, 
 $a^*a=vaa^*v^*$ with some $v\in U(q,\mathbb F)$ for  $\mathbb F=\mathbb
R,\mathbb C$. 
In fact, this also valid for  $\mathbb H$. To check this, write $a\in M_q(
\mathbb H)$ as
$a=a_1+ja_2$ for complex matrices $a_1,a_2\in  M_q(  \mathbb C)$, and form 
$$\chi_a:=\left(\begin{array}{cc}
 a_1&a_2\\
-\bar a_2&\bar a_1
\end{array}
\right) \quad\in M_{2q}(\mathbb C).$$
The mapping $\chi:M_q(  \mathbb H)\to M_{2q}(\mathbb C)$, $a\mapsto \chi_a$,
 is a $*$-homomorphism of algebras, and $\chi_a^*\chi_a$ and  $\chi_a\chi_a^*$
 have the same eigenvalues  as $a^*a$ and $aa^*$ respectively with the doubled
multiplicities;
 see the survey \cite{Zh}. Thus, $a^*a$ and $aa^*$ have the same
eigenvalues with the same multiplicities, and hence  $a^*a=vaa^*v^*$ with some
$v\in U(q,\mathbb H)$.

 Using  $a^*a=vaa^*v^*$ for some $v\in U(q,\mathbb F)$, we see that for
 each fixed $w\in B_q$ 
$$\int_{U_0(q,\mathbb F)} \Delta_{(i\lambda-\rho)/2}(\widetilde g(t,u,w))\> du =
\int_{U_0(q,\mathbb F)}   \Delta_{(i\lambda-\rho)/2}(u^*vaa^*v^*u)\> du =
\int_{U_0(q,\mathbb F)}     \Delta_{(i\lambda-\rho)/2}( g(t,u,w))\> du.$$
This yields the second equation.
\end{proof}

We now identify $t\in C_q$ with the matrices $a_t\in G$ as above and regard
the spherical functions $\phi_\lambda^p$ above  as functions on the Weyl
chamber $C_q$. With this agreement we now extend the integral representation 
(\ref{intrep_BC}) above from integer parameters $p\ge 2q$ to arbitrary real
parameters
$p\ge 2q-1$. For this we fix $\mathbb F$ (and thus $d=1,2,4$) and define the
functions
\begin{equation}\label{def-phi}
\phi_\lambda^p(t):=  F_{BC}(i\lambda,k_p;t) \quad\quad (t\in C_q, \>
\lambda\in\mathbb C^q)
\end{equation}
 with
$$k_p=(d(p-q)/2, (d-1)/2, d/2),$$
which are  analytic in $p$ with ${\rm Re}\> p>q$. Note that for integers
$p$, the functions $\phi_\lambda^p$ are precisely the spherical functions
 \eqref{harish1}.
For the extension  of the integral representation, we shall employ
 Carleson's theorem on analytic continuation which we 
recapitulate  from \cite{Ti}, p.186:

\begin{theorem}\label{continuation} Let $f(z)$ be holomorphic in a neighbourhood
of
$\{z\in \mathbb C:{\rm Re\>} z \geq 0\}$ satisfying $f(z) =
O\bigl(e^{c|z|}\bigr)$
on $\,{\rm Re\>}  z \geq 0$ for some $c<\pi$. 
If $f(z)=0$ for all nonnegative integers $z$, then $f$ is identically zero for
${\rm Re\>}  z>0$.
\end{theorem}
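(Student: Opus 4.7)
The statement is the classical uniqueness theorem for entire functions of exponential type (usually attributed to F. Carlson), and the standard proof proceeds by dividing $f$ by $\sin(\pi z)$ and invoking the Phragm\'en--Lindel\"of principle. Since the result is cited from Titchmarsh, no proof is strictly required in the paper; I therefore only outline the strategy.

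The first step is to introduce $g(z)=f(z)/\sin(\pi z)$. Because $\sin(\pi z)$ has only simple zeros, all located at the integers, and $f$ vanishes at every nonnegative integer, $g$ extends to a function holomorphic on a neighbourhood of the closed right half-plane. The second step is to bound $g$ on the imaginary axis: using $|\sin(\pi iy)|=\sinh(\pi|y|)$ together with the hypothesis $|f(iy)|\leq Ce^{c|y|}$ gives $|g(iy)|\leq Ce^{c|y|}/\sinh(\pi|y|)\to 0$ as $|y|\to\infty$, since $c<\pi$; in particular $g$ is uniformly bounded on the imaginary axis.

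The third and key step is a Phragm\'en--Lindel\"of argument in the right half-plane. I would split this half-plane into the two open quadrants along the positive real axis. In the sector $0<\arg z<\pi/2$, one has the pointwise estimate $|\sin(\pi z)|\geq\tfrac12 e^{\pi r\sin\theta}$ for $z=re^{i\theta}$, so combined with $|f(z)|=O(e^{c|z|})$ we see that $g$ decays exponentially away from the real axis. Along a sequence of circular arcs of radius $n+\tfrac12$ (which stay at distance $\geq\tfrac12$ from every integer), $|\sin(\pi z)|$ admits a uniform lower bound, yielding a uniform bound for $g$ on such arcs. Since $g$ has order at most one, which is strictly less than the critical order $\pi/(\pi/2)=2$ for a quadrant, the Phragm\'en--Lindel\"of principle upgrades the boundary estimate to a bound on the entire quadrant. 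A symmetric argument handles the lower quadrant, and the decay of $g$ on the imaginary axis then forces $g$ to vanish identically, whence $f\equiv 0$ on $\operatorname{Re} z>0$.

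The main obstacle is the sharpness of the growth condition $c<\pi$: one is operating at precisely the critical exponential type at which $\sin(\pi z)$ dominates $f$, and the strict inequality is exactly what provides the decay of $g$ on the imaginary axis needed to close the Phragm\'en--Lindel\"of argument in a sector of opening $\pi/2$. For the precise execution of these estimates I would refer to Titchmarsh, \S 5.81.
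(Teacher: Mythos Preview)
The paper does not prove this theorem at all: it is stated as a quotation from Titchmarsh, p.~186, and then used as a black box in the proof of Theorem~\ref{int-rep} to extend the integral representation analytically in the parameter $p$. Your recognition that this is Carlson's classical uniqueness theorem, and your outline of the standard proof via $g(z)=f(z)/\sin(\pi z)$ followed by a Phragm\'en--Lindel\"of argument, is correct and is indeed essentially the argument in Titchmarsh \S5.81. Since the paper offers no proof of its own, there is no alternative approach to compare with; your deferral to the cited reference is exactly what the paper itself does.
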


We shall prove:

\begin{theorem}\label{int-rep}
Let $p\in\mathbb R$  with $p> 2q-1$. Then the
functions \eqref{def-phi} satisfy
\begin{equation}\label{intrep-main} \varphi_\lambda^{p}(t) =
\int_{B_q\times U_0(q,\mathbb
F)} 
\Delta_{(i\lambda-\rho^{BC})/2}(g_t(u,w))\>
dm_p(w)du \end{equation}
for all $\lambda\in\mathbb C^q$ and $t\in C_q\,,$
where again the argument $g_t$ may be replaced by  $\,\widetilde g_t$  as in
Corollary
\ref{int-rep-spherical}.
\end{theorem}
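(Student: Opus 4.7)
The plan is to prove the identity in \eqref{intrep-main} by analytic continuation in the parameter $p$ from the integers $p \geq 2q$ handled in Corollary \ref{int-rep-spherical}. Fix $\lambda \in \mathbb C^q$ and $t \in C_q$, and define
\[ f(z) := \varphi_\lambda^{2q+z}(t) - \int_{B_q\times U_0(q,\mathbb F)} \Delta_{(i\lambda-\rho^{BC}(2q+z))/2}(g_t(u,w))\, dm_{2q+z}(w)\, du. \]
The aim is to show that $f$ satisfies the hypotheses of Theorem \ref{continuation}, vanishes on $z \in \mathbb Z_{\geq 0}$, hence vanishes identically on $\{\mathrm{Re}\,z \geq 0\}$, and then by the identity principle on the strictly larger domain of holomorphy of $f$, yielding the claim for all real $p > 2q-1$.

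First I would verify that $f$ is holomorphic on a neighborhood of the closed right half-plane (in fact on $\{\mathrm{Re}\,z > -1\}$, corresponding to $\mathrm{Re}\,p > 2q-1$). For the first summand this is the standing fact, recalled just before Theorem \ref{int-rep}, that $p \mapsto F_{BC}(i\lambda,k_p;t)$ is holomorphic for $\mathrm{Re}\,p > q$. For the second summand, note that on $B_q$ one has $0 < \Delta(I-w^*w) \leq 1$, so $w \mapsto \Delta(I-w^*w)^{pd/2-\gamma}$ is holomorphic in $p$ with modulus $\Delta(I-w^*w)^{\mathrm{Re}(pd/2) - \gamma}$; combined with the local boundedness of $\Delta_{(i\lambda-\rho^{BC})/2}(g_t(u,w))$ in $p$ (the exponents in \eqref{power-function} depend linearly on $p$ and each $\Delta_r(g_t(u,w))$ is a continuous positive function on the compact set $U_0(q,\mathbb F)\times\overline{B_q}$-interior data), Morera together with Fubini gives holomorphy of the integral. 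Finally, the normalizing constant $1/\kappa_{pd/2}$ is holomorphic: by the Gindikin beta formula, $\kappa_{pd/2}$ is an explicit product of ratios of Gamma functions in $p$, with no zeros or poles for $\mathrm{Re}\,p > 2q-1$.

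The second, and in my view the principal, obstacle is the growth estimate $f(z) = O(e^{c|z|})$ with some $c < \pi$ on $\mathrm{Re}\,z \geq 0$. For the integral term, I would bound the integrand pointwise by $C_1(t)^{|\mathrm{Re}\,z|} \cdot \Delta(I-w^*w)^{\mathrm{Re}(pd/2)-\gamma}$, since the $\rho^{BC}$-dependent exponents of $\Delta_r(g_t(u,w))$ contribute at most an exponential factor in $|z|$ uniformly on $U_0(q,\mathbb F)\times B_q$. After integration this is controlled by $\kappa_{pd/2}$ itself, and Stirling applied to the Gamma-ratio expression of $\kappa_{pd/2}$ (respectively $1/\kappa_{pd/2}$) yields a bound which is at most a polynomial in $|z|$ on vertical strips times an exponential of controlled rate, thus $O(e^{c|z|})$ for any prescribed $c > 0$. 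For the hypergeometric term, the standard bound $|F_{BC}(i\lambda,k;t)| \leq e^{\max_{w\in W}\mathrm{Re}(w\lambda)(t)} \cdot (\text{polynomial in }k)$, derived from the Harish-Chandra series or the Cherednik integral, gives at most polynomial growth in $|z|$ on vertical strips. Together these imply $|f(z)| = O(e^{c|z|})$ with any $c < \pi$.

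With holomorphy and the growth bound in hand, Corollary \ref{int-rep-spherical} applied at each integer $p=2q+n$ with $n \in \mathbb Z_{\geq 0}$ yields $f(n) = 0$. Theorem \ref{continuation} then forces $f \equiv 0$ on $\mathrm{Re}\,z > 0$, and by analyticity on the whole half-plane $\mathrm{Re}\,z > -1$, proving the identity for all real $p > 2q-1$. The argument for the alternative kernel $\widetilde g_t(u,w)$ is identical, using the corresponding equality in Corollary \ref{int-rep-spherical} as the seed at integer $p \geq 2q$.
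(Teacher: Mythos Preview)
Your overall strategy---apply Carleson's theorem in $p$ for fixed $\lambda$---is natural, but there is a genuine gap in the growth estimate for the hypergeometric term. You assert a ``standard bound'' of the form
\[
|F_{BC}(i\lambda,k;t)| \leq e^{\max_{w\in W}\mathrm{Re}\langle w\lambda, t\rangle} \cdot (\text{polynomial in }k),
\]
valid for complex $k$, and attribute it to the Harish-Chandra series or the Cherednik integral. No such bound is available for \emph{complex} multiplicities. The estimates of Opdam (Proposition~6.1 in \cite{O1}), Schapira \cite{Sch}, and \cite{RKV} all require $k$ to be real and nonnegative; the Cherednik kernel representation and the positivity arguments behind these bounds break down once $k_p$ leaves the nonnegative cone, which it does as soon as $z=p-2q$ acquires a nonzero imaginary part. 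Without this bound you cannot verify the hypothesis $f(z)=O(e^{c|z|})$ of Carleson's theorem for the first summand, and the argument collapses.

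This is exactly the obstruction the paper's proof is designed to avoid. The paper proceeds in two stages: first it restricts to spectral parameters $\lambda\in P_+$, for which $F_{BC}(\lambda+\rho(k_p),k_p;t)=c(\lambda+\rho(k_p),k_p)\,P_\lambda(k_p;t)$ with $P_\lambda$ a Heckman--Opdam polynomial. The polynomial $P_\lambda(k_p;t)$ has coefficients rational in $k_p$, and the $c$-function is an explicit product of Gamma ratios; this gives \emph{polynomial} (hence sub-exponential) growth in $p$ on the relevant half-plane, so Carleson applies in $p$ for these discrete $\lambda$. In the second stage one fixes a real $p>2q-1$, so that $k_p\geq 0$ and the Opdam bound is legitimately available in $\lambda$, and extends from $\lambda\in P_+$ to all of $\mathbb C^q$ by a $q$-fold Carleson argument in $\lambda$. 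If you can supply an independent sub-exponential bound on $F_{BC}(i\lambda,k_p;t)$ in complex $p$, your one-step argument would work; absent that, you need to route through the polynomial case as the paper does.
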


\begin{proof}
We first observe that both sides of  (\ref{intrep-main}) 
 are analytic in 
$p$ and $\lambda$.
In order to employ Carleson's theorem to extend  (\ref{intrep_BC})
to $p\in]2q-1,\infty[$, we need a
 suitable exponential growth bound on
  $F_{BC}$ w.r.t.~$p$  in some right half
plane. 
 Such  exponential estimates
are available only for real, nonnegative multiplicities; see Proposition 6.1 of
 \cite{O1}, \cite{Sch}, and Section 3 of \cite{RKV}. We thus  proceed in
 two steps and closely
 follow the proof of Theorem 4.1 of \cite{R2}, where a product formula is
obtained by analytic continuation. We first restrict our attention
 to a discrete set of spectral parameters $\lambda$ for which  $F_{BC}$
 is a (renormalized) Jacobi polynomial and where the growth condition is easily
checked.
Carleson's theorem then leads
 to  (\ref{intrep-main})  for this discrete set
 of  parameters $\lambda$  and all
$p\in]2q-1,\infty[$.
 In a further step we fix $p\in]2q-1,\infty[$ and   extend  (\ref{intrep-main})
 to all   $\lambda\in \mathbb C^q$.

Let us go into details. We need some notation and
facts from \cite{O1} and \cite{HS}.
For  $R=2\cdot BC_q$ with the
 set $R_+$ of positive roots,
  consider the half sum of positive roots 
\begin{equation}\label{def-rho}
\rho(k):=\frac{1}{2}\sum_{\alpha\in R_+} k(\alpha)\alpha= 
\sum_{i=1}^q (k_1 + 2k_2 +2k_3(q-i))e_i
\end{equation}
 as well
 the $c$-function
\begin{equation}\label{c_function} c(\lambda,k) := \prod_{\alpha\in R_+}
\frac{\Gamma(\langle\lambda,\alpha^\vee\rangle +
\frac{1}{2}k(\frac{\alpha}{2}))}{\Gamma (\langle\lambda,\alpha^\vee\rangle +
\frac{1}{2}k(\frac{\alpha}{2}) + k(\alpha))}\cdot \prod_{\alpha\in R_+}
\frac{\Gamma
(\langle\rho(k),\alpha^\vee\rangle + \frac{1}{2}k(\frac{\alpha}{2}) +
k(\alpha))}{\Gamma(\langle\rho(k),\alpha^\vee\rangle +
\frac{1}{2}k(\frac{\alpha}{2}))}\end{equation}
with the usual inner product on $\mathbb C^q$ and 
the conventions $\alpha^\vee:=2\alpha/\langle\alpha,\alpha\rangle$
and
 $k(\frac{\alpha}{2}) = 0$ for $\frac{\alpha}{2}\notin R$. The $c$-function
 is meromorphic on 
$\mathbb C^q\times\mathbb C^3$. We  consider the dual root system
  $R^\vee = \{\alpha^\vee: \alpha\in R\}$,
the  coroot lattice  $Q^\vee = \mathbb Z.R^\vee$, and the weight lattice
$P=\{ \lambda\in \mathbb R^q: \langle\lambda,\alpha^\vee\rangle \in \mathbb Z
\,\,\forall\,\alpha\in R\}.$ Further, denote by
$\, P_+ =\{ \lambda\in P: \langle\lambda,\alpha^\vee\rangle \geq 0
\,\,\forall\,\alpha\in R_+\}\,$  the set of dominant weights associated with
$R_+$. In our case, $P_+ = C_q\cap 2\mathbb Z^q.$
According to Eq.~(4.4.10) of \cite{HS}, we have for 
 $k\ge0$ and  $\lambda\in P_+$ the connection
\begin{equation}\label{fbc-jacobi}
 F_{BC}(\lambda +\rho(k), k;t) = \,c(\lambda+\rho(k),k)
  P_\lambda(k;t)
\end{equation}
 where the 
 $P_\lambda$ are the Heckman-Opdam Jacobi polynomials associated with  $BC_q$.
We also  consider the specific multiplicities
$k_{p}:=(d(p-q)/2, (d-1)/2, d/2) $
 and the associated half sums 
$\rho(k_p) = \rho^{BC}$ as in
 (\ref{rho-BC}). With these notations we obtain from \eqref{fbc-jacobi} and 
\eqref{def-phi} that the integral representation \eqref{intrep-main}
can be written as
\begin{equation}\label{modi-int-rep-pol}
P_\lambda(k_p;t)= \frac{1}{c(\lambda+\rho(k_p),k_p)}\cdot
\frac{1}{\kappa_{pd/2}}
\int_{B_q}\int_{U_0(q,\mathbb F)}
\Delta_{\lambda/2}( g_t(u,w))\Delta(I-w^*w)^{pd/2-\gamma}  dwdu.
\end{equation}

Exactly as in the proof of Theorem 4.1 of \cite{R2}, it is now checked that both
sides of \eqref{modi-int-rep-pol} are, as functions of $p$,  of polynomial
growth in the half-plane $\{ p\in \mathbb C: \text{Re} (pd/2) >\gamma-1\};$ we
omit the details.
We may therefore apply Carleson's
 theorem to \eqref{modi-int-rep-pol}, and this proves \eqref{intrep-main} for
$p$
 with ${\rm Re}(pd/2)>\gamma-1$ and all spectral parameters of the form
 $-i(\lambda+\rho(k_p))$
with $\lambda\in P_+$.

We next fix $p\in\mathbb R$ with $p>2q-1$ (in which case $k_p$ is
nonnegative) and extend \eqref{intrep-main} with respect to the spectral
parameter $\lambda$. According to
Proposition 6.1 of \cite{O1}, 
$$|F_{BC}(\lambda,k_p;t)|\le
 |W|^{1/2}e^{\max_{w\in W} {\rm Re}\> \langle w\lambda, t\rangle}$$ where $W$
 is the Weyl group of $BC_q$. Let $C_q^0$
 be the interior of $C_q$ and 
$H^\prime:=\{\lambda\in\mathbb C^q:\>  {\rm Re}\lambda\in C_q^0\}$. Then 
$${\rm Re}\langle w\lambda, t\rangle \le {\rm Re}\langle \lambda, t\rangle
\quad\quad\text{for}\quad
\lambda\in H^\prime, \> t\in C_q,\> w\in W.$$
Now fix $t\in C_q$ and $p$ as above, 
and choose a vector $a\in C_q^0$ sufficiently large.
Then \eqref{intrep-main} for the spectral parameter $\lambda+\rho(k_p)$
is equvalent to
$$e^{-\langle \lambda, a+t\rangle}
\varphi_{-i(\lambda+\rho(k_p))}^{p}(t) = \int_{B_q\times U_0(q,\mathbb F)} 
e^{-\langle \lambda, a+t\rangle}\cdot \Delta_{\lambda/2}(g_t(u,w))\>
dm_p(w)du. $$
The left hand side remains bounded
for
 $\lambda\in H^\prime$.
 Moreover, for  $a\in C_q^0$ sufficiently large,
$$\sup_{(u,w)\in U_0(q,\mathbb F)\times B_q;\lambda\in H^\prime }
 |e^{-\langle \lambda, a+t\rangle}\cdot \Delta_{\lambda/2}(g_t(u,w))|<\infty,$$
which proves that also the right hand side  remains bounded for
 $\lambda\in H^\prime$. By a $q$-fold application of Carleson's theorem
we thus may extend the preceding equation from $\lambda\in P_+$ to
$\lambda\in H^\prime$. 
A classical analytic continuation now finishes the proof.
\end{proof}

The above proof reveals in particular the following  integral representation for  Heckman-Opdam polynomials
of type $BC$:

\begin{corollary} Let $k_p = (d(p-q)/2, (d-1)/2, d/2)$ with $p\in \mathbb R, \, p> 2q-1.$
 Then the  Heckman-Opdam polynomials of type $BC_q$ with multiplicity 
$k_p$ have the integral representation 
\[ P_\lambda(k_p;t)= \frac{1}{c(\lambda+\rho(k_p),k_p)}
\int_{B_q\times U_0(q,\mathbb F)}
\Delta_{\lambda/2}( g_t(u,w))dm_p(w)du \quad \text{ for } t\in \mathbb C^q.\]
Here $\lambda \in P_+ = C_q\cap 2\mathbb Z^q$ and 
\[g_t(u,w) = u^{-1}(\cosh\underline t \, + w^*\sinh \underline t)(\cosh\underline t \, 
 + \sinh \underline t \, w)u\,.\]
\end{corollary}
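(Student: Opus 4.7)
The strategy is to read the corollary off from the proof of Theorem~\ref{int-rep}. That proof establishes equation (\ref{modi-int-rep-pol}), namely
\[P_\lambda(k_p;t)= \frac{1}{c(\lambda+\rho(k_p),k_p)}\cdot \frac{1}{\kappa_{pd/2}} \int_{B_q}\int_{U_0(q,\mathbb F)} \Delta_{\lambda/2}(g_t(u,w))\,\Delta(I-w^*w)^{pd/2-\gamma}\,dw\,du,\]
for all real $p > 2q-1$ and all $\lambda \in P_+$, obtained by combining (\ref{fbc-jacobi}) with the first Carleson-continuation step in $p$. Interpreting $\frac{1}{\kappa_{pd/2}}\Delta(I-w^*w)^{pd/2-\gamma}dw$ as $dm_p(w)$ via (\ref{measure-mp}) recasts this as the integral over $B_q \times U_0(q,\mathbb F)$ against $dm_p(w)\,du$ claimed in the corollary. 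For real $t$ the $g_t$ in the corollary agrees with the one from Corollary~\ref{int-rep-spherical}, since $\cosh\underline t$ and $\sinh\underline t$ are then self-adjoint, so $(\cosh\underline t+\sinh\underline t\,w)^* = \cosh\underline t + w^*\sinh\underline t$; the formula as written in the corollary is the natural holomorphic extension of this to complex $t$.

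It remains to extend the identity from $t \in C_q$ to all $t \in \mathbb C^q$, and I would handle this by analytic continuation. For $\lambda \in P_+ = C_q \cap 2\mathbb Z^q$ the exponents $\lambda_1/2 \geq \ldots \geq \lambda_q/2 \geq 0$ appearing in the definition (\ref{power-function}) are nonnegative integers, so $x\mapsto \Delta_{\lambda/2}(x)$ is a polynomial in the entries of $x$. Consequently $\Delta_{\lambda/2}(g_t(u,w))$ is a polynomial in $\cosh t_i,\sinh t_i$ with coefficients that are continuous (indeed polynomial) in the entries of $u,w$; it is therefore entire in $t\in\mathbb C^q$, and since $B_q \times U_0(q,\mathbb F)$ carries a finite measure, differentiation under the integral sign shows that the right-hand side is entire in $t$. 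The left-hand side $P_\lambda(k_p;t)$ is a trigonometric polynomial in $t$ and is entire as well. Both sides agree on the real slice $C_q$ by (\ref{modi-int-rep-pol}), so the identity theorem yields agreement on all of $\mathbb C^q$.

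The main obstacle has already been dealt with in the proof of Theorem~\ref{int-rep}, where Carleson's theorem was invoked to continue in the parameter $p$. Relative to that, the present corollary is essentially bookkeeping: substituting the polynomial-level relation (\ref{fbc-jacobi}), renaming the probability density against $dw$ as $dm_p$, and running a routine analytic continuation in $t$. The only subtlety worth checking carefully is the holomorphy of both sides in $t\in\mathbb C^q$, which is verified as above.
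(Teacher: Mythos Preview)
Your proposal is correct and matches the paper's approach: the corollary is presented there as an immediate byproduct of the proof of Theorem~\ref{int-rep}, specifically of equation~(\ref{modi-int-rep-pol}) after the first Carleson step in $p$. You supply the one detail the paper leaves implicit, namely the routine analytic continuation from $t\in C_q$ to $t\in\mathbb C^q$ using that $\Delta_{\lambda/2}$ is a genuine polynomial for $\lambda\in P_+$.
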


\begin{remark}\label{degen-case-2q-1} For the limit case $p=2q-1$, a degenerate
version of the
  integral representation  
(\ref{intrep-main}) is available. For this we follow
Section 3 of \cite{R1}.

We fix the dimension $q$ and consider the matrix ball 
$B_q:=\{w\in M_q(\mathbb F):\> w^*w < I_q\}$ as above as well as the ball
$B:=\{y\in\mathbb F^q:\> \|y\|_2= (\sum_{j=1}^{q}\overline y_j y_j)^{1/2} <1\}$
and the sphere
$S:=\{y\in\mathbb F^q:\> \|y\|_2=1\}$.
By Lemma 3.7 and Corollary 3.8 of \cite{R1}, the mapping 
\begin{equation}\label{trafo-P1}
 P(y_1, \ldots, y_q):= \begin{pmatrix}y_1\\y_2(I_q-y_1^*y_1)^{1/2}\\ 
\vdots\\
  y_q(I_q-y_{q-1}^*y_{q-1})^{1/2}\cdots (I_q-y_{1}^*y_{1})^{1/2}\end{pmatrix},
\quad y_1, \ldots, y_q\in B
\end{equation}
establishes a diffeomorphism  $\,P: B^q \to B_q$. The image of the measure
$dm_p(w)$ 
under  $P^{-1}$ is given by
\begin{equation}\label{image_meas}
\frac{1}{\kappa_{pd/2}}\prod_{j=1}^{q}
(1-\|y_j\|_2^2)^ { d(p-q-j+1)/2-1 }
dy_1\ldots dy_q\,.\end{equation}
Thus for $p>2q-1$, the integral representation
(\ref{intrep-main}) may be rewritten as
\begin{equation}\label{intrep-main-mod}
\phi_\lambda^p(t)=
\frac{1}{\kappa_{pd/2}}\int_{B^q}\int_{U_0(q,\mathbb F)} 
\Delta_{(i\lambda-\rho^{BC})/2}( g_t(u,P(y)))
\cdot\prod_{j=1}^{q}(1-\|y_j\|_2^2)^{d(p-q-j+1)/2-1}dy_1\ldots
dy_q dw
\end{equation}
where $dy_1,\ldots,dy_q$ means integration w.r.t.~the Lebesgue
measure on $\mathbb F^q$.
 Moreover,  for $p\downarrow 2q-1$,  (\ref{intrep-main-mod}) 
and continuity lead to the following degenerated
 product formula:
\begin{align}\label{intrep-main-degen}
\phi_\lambda^{2q-1}(t)=
\frac{1}{\kappa_{(2q-1)d/2}}\int_{B^{q-1}}\int_S\int_{U_0(q,\mathbb F)}
\Delta_{(i\lambda-\rho^{BC})/2}( g_t(u,P(y)))\> 
\cdot &\notag\\
 \cdot\prod_{j=1}^{q-1}(1-\|y_j\|_2^2)^{d(q-j)/2-1}dy_1\ldots
dy_{q-1}\> d\sigma(y_q)\> dw
\end{align}
where $\sigma\in M^1(S)$ is the uniform distribution on the sphere 
$S$
 and
$$\kappa_{(2q-1)d/2}=\int_{B^{q-1}}\int_S\prod_{j=1}^{q-1}(1-\|y_j\|_2^2)^{
d(q-j)/2-1 }
dy_1\ldots
dy_{q-1}\> d\sigma(y_q).$$
Notice that the $\phi_\lambda^{2q-1}$ are the spherical functions of the
Grassmannian  $\mathcal G_{2q-1,q}(\mathbb F)$.
\end{remark}

\section{The connection with spherical functions of type $A_{q-1}$}

We shall compare the spherical functions  of
the Grassmannians
$\mathcal G_{p,q}(\mathbb F)$  with the
spherical
functions of the symmetric space $\mathcal P_q(\mathbb F) =
G/K$ with
$G=GL(q, \mathbb F), K = U(q, \mathbb F).$ It is well-known that $G$ has the
Iwasawa decomposition
$G = K A N$ where $A = \exp \mathfrak a, \,\,\mathfrak a = \{ H_t = \underline
t, \, t=(t_1, \ldots, t_q) \in \mathbb R^ q\}$
and $N$ is the unipotent group consisting of all
 upper trangular matrices with entries $1$ in the diagonal. 
The restricted root system
$\Delta(\mathfrak g, \mathfrak a)$ is of type $A_{q-1},$ with a positive
subsystem  given by
\[\Delta_+ = \{ f_i - f_j: 1 \leq i<j\leq q\}.\]
Here the multiplicity is $m_\alpha = d\,$ for all $\alpha \in \Delta_+$ and the
weighted half-sum
of positive roots is
\[ \rho^A = \sum_{i=1}^q \frac{d}{2}(q+1-2i)f_i \,.\]
Again, $\mathfrak a_\mathbb C^*$ may be identified with $\mathbb C^q$ via $\,
\lambda \mapsto (\lambda_1, \ldots, \lambda_q)$ for $\lambda\in \mathfrak
a_\mathbb C^*$ given by 
$\lambda(H_t) = \sum_{r=1} ^q\lambda_r t_r\,,$ $\lambda_r\in \mathbb C.$
We briefly recall the further well-known calculation, which is 
 similar to the Grassmannian case:
For $g= k\exp(H_t)n\in KAN$ one obtains 
$\,\Delta_r(g^*g) = e^{2(t_1 + \ldots + t_r)}\,$
and thus 
\[ t_r = \frac{1}{2} \log \frac{\Delta_r(g^*g)}{\Delta_{r-1}(g^*g)}
\quad\quad (r=1,\ldots,q)
.\]
If $g = a_t k$ with $a_t = \exp(H_t) = e^{\underline t}\,$ and $k\in K$, then
$g^*g = k^{-1}e^{2\underline t}k$. 
The spherical functions of $G/K = \mathcal P_q(\mathbb F)$ 
are  given by
\begin{equation}\label{def-spherical_A} 
 \varphi_\lambda^A(e^{\underline t}) = \int_K e^{(i\lambda-\rho^A)(H(a_tk))}
 dk,
\quad \lambda \in \mathbb C^q.
\end{equation}
The above considerations lead to the known integral representation
\begin{equation}\label{spherical_A}
 \varphi_\lambda^A(e^{\underline t}) =\,
\int_{U(q,\mathbb F)} \Delta_{(i\lambda-\rho^A)/2}\bigl(u^{-1}e^{2\underline
t}\,u\bigr) du \, = 
\,\int_{U_0(q,\mathbb F)}
\Delta_{(i\lambda-\rho^A)/2}\bigl(u^{-1}e^{2\underline t}\,u\bigr) du.  
 \end{equation}
We also remark that the functions $\varphi_\lambda^A$ can be written in terms
of the Heckman-Opdam hypergeometric function $F_A$ associated with the root
system
$\,2A_{q-1} = \{\pm2(e_i-e_j): 1\leq i < j\leq q\}$, as follows:
 \begin{equation}\label{spherical_A-F_A}
\varphi_\lambda^A(e^{\underline t}) = e^{\langle t-\pi(t),\lambda\rangle}\cdot 
F_A(\pi(\lambda), d/2;\pi(t))
\quad\quad (\lambda\in\mathbb C^q,\> t\in\mathbb R^q).
\end{equation}
Here $\pi$ denotes the orthogonal projection from $\mathbb R^q$ onto
 $\mathbb R^q_0:=\{t\in \mathbb R^q: t_1+\ldots+t_q=0\}$;
see  Eq.~(6.7) of \cite{RKV}, and note our rescaling of the root system by the factor $2$. 

\smallskip
We compare \eqref{spherical_A} with the integral 
  (\ref{intrep_BC}) for the spherical functions of $\mathcal G_{p,q}(\mathbb
F)$ and, more generally, with representation \eqref{intrep-main} for 
the hypergeometric functions $\varphi_{\lambda-i\rho^{BC}}^{p}$.
As for $p\to\infty$ the probability measures $m_p$ on $B_q$ tend weakly to the
point measure  at the zero matrix, we obtain:

\begin{corollary}\label{first-limit}
 The spherical functions of $\mathcal G_{p,q}(\mathbb F),$ and more generally, the hypergeometric functions
$\varphi_{\lambda-i\rho^{BC}}^{p}$ with $ p\in \mathbb R, \, p>2q-1$  are related to the spherical functions of $\mathcal
P_q(\mathbb F)$ by
 \[\lim_{p\to \infty} \varphi_{\lambda-i\rho^{BC}}^{p}(t) = \,
\varphi_{\lambda-i\rho^A}^A(\cosh \underline t) \quad (t\in \mathbb R^ q).\]

\end{corollary}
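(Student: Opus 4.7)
The plan is to deduce the limit directly from the integral representation in Theorem \ref{int-rep} by showing that the measures $m_p$ on $B_q$ converge weakly to the Dirac mass at $0$ as $p\to\infty$, and that the integrand behaves nicely enough to pass to the limit.

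First I would specialize Theorem \ref{int-rep} to the spectral parameter $\lambda - i\rho^{BC}$. Then the exponent in the power function simplifies: $(i(\lambda - i\rho^{BC}) - \rho^{BC})/2 = i\lambda/2$, so
\begin{equation*}
\varphi_{\lambda - i\rho^{BC}}^{p}(t) = \int_{B_q\times U_0(q,\mathbb F)} \Delta_{i\lambda/2}\bigl(g_t(u,w)\bigr)\, dm_p(w)\,du.
\end{equation*}
Next I would observe that $g_t(u,0) = u^{-1}\cosh^2\underline t\, u$, which is exactly the argument appearing in the type $A$ integral representation \eqref{spherical_A}. Indeed, in \eqref{spherical_A} applied to the spectral parameter $\lambda - i\rho^A$ at the point $\cosh\underline t$ (i.e.\ with $e^{2\underline s}=\cosh^2\underline t$), the exponent becomes $(i(\lambda - i\rho^A)-\rho^A)/2 = i\lambda/2$, giving
\begin{equation*}
\varphi_{\lambda - i\rho^A}^{A}(\cosh\underline t) = \int_{U_0(q,\mathbb F)} \Delta_{i\lambda/2}\bigl(u^{-1}\cosh^2\underline t\, u\bigr)\, du.
\end{equation*}
Thus the corollary reduces to the assertion that the $B_q$-integral against $dm_p$ converges to the value of the integrand at $w=0$.

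The key analytic fact is that $m_p\to \delta_0$ weakly on $B_q$ as $p\to\infty$. This follows from Laplace's method: the density $\kappa_{pd/2}^{-1}\Delta(I-w^*w)^{pd/2-\gamma}$ equals $\kappa_{pd/2}^{-1}\exp\bigl((pd/2-\gamma)\log\Delta(I-w^*w)\bigr)$, and $\log\Delta(I-w^*w)\le 0$ on $B_q$ with strict maximum only at $w=0$, so the mass concentrates there. To pass to the limit in the integrand I would use that for fixed $t \in C_q$, the map $w\mapsto g_t(u,w)$ sends the open ball $B_q$ into $\Omega_q$ continuously, so $\Delta_{i\lambda/2}(g_t(u,w))$ is continuous on $U_0(q,\mathbb F)\times B_q$. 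The modulus satisfies $|\Delta_{i\lambda/2}(g_t(u,w))| = \Delta_{-\operatorname{Im}\lambda/2}(g_t(u,w))$ (a real power function), which is bounded on any compact subset of $U_0(q,\mathbb F)\times B_q$.

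The main (mild) obstacle is justifying that there is no contribution from near the boundary $\partial B_q$, where principal minors of $g_t(u,w)$ can degenerate and $\Delta_{i\lambda/2}$ may blow up. I would split the integration domain, for given $\varepsilon>0$, into the region $\|w\|\le\varepsilon$ and its complement. On $\|w\|\le\varepsilon$ the integrand is bounded and converges uniformly to $\Delta_{i\lambda/2}(g_t(u,0))$ as $\varepsilon\to 0$, while on the complement the integrand grows at most like a power of the minors of $I-w^*w$, which is absorbed by the strongly decaying factor $\Delta(I-w^*w)^{pd/2-\gamma}$ for $p$ large enough. Combining these two estimates and letting first $p\to\infty$ and then $\varepsilon\to 0$ yields the claim; the equality with $\varphi_{\lambda-i\rho^A}^A(\cosh\underline t)$ is then immediate from the formulas above.
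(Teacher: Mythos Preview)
Your proposal is correct and follows essentially the same approach as the paper, which simply observes that the probability measures $m_p$ tend weakly to $\delta_0$ as $p\to\infty$ and compares the two integral representations \eqref{intrep-main} and \eqref{spherical_A}. Your boundary-splitting precaution, while not wrong, is unnecessary: for fixed $t$ one has $\cosh\underline t + \sinh\underline t\,w = \cosh\underline t\,(I+\tanh\underline t\,w)$ with $\|\tanh\underline t\,w\|\le\tanh t_1<1$ for every $w\in\overline{B_q}$, so $g_t(u,w)$ remains in a compact subset of $\Omega_q$ and the integrand $\Delta_{i\lambda/2}(g_t(u,w))$ extends continuously and boundedly to $U_0(q,\mathbb F)\times\overline{B_q}$, whence weak convergence of $m_p$ to $\delta_0$ suffices directly.
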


This result was already obtained in  Corollary 6.1 of \cite{RKV}
 by completely different methods, namely  as a special case of a 
general limit transition for hypergeometric functions
of type BC. However, the approach in \cite{RKV} seems not
suitable to gain information on the rate of convergence.
In the following section, we study the  integral
representations \eqref{spherical_A} and (\ref{intrep_BC}) 
(or \eqref{intrep-main} for continuous $p$) 
in order to derive precise estimates on the rate of convergence.

\section{ The rate of convergence for  $p\to\infty$}

The main result of this section is Theorem \ref{main-p-infty}. It sharpens the
qualitative limit of Corollary \ref{first-limit} for the Heckman-Opdam
hypergeometric functions $\varphi_\lambda^p$
by a precise estimate of the approximation error. 
Again, $p> 2q-1$ varies and
the rank $q$ as well as the dimension $d=1,2,4$ of $\mathbb F$ are fixed. 
For convenience, we consider the type $A$ spherical functions
$\varphi_{\lambda}^A$  as functions on $\mathbb R^q$ and study
\begin{equation}\label{psi_int}\psi_\lambda(t):=\varphi_{\lambda}^A(\cosh \underline t)=
\int_{U_0(q,\mathbb F)}
\Delta_{(i\lambda-\rho^A)/2}\bigl(u^{-1}\cosh^{2}\!\underline t u\bigr)
du.\end{equation}
We write
\begin{align*} &\|\lambda\|_1:=|\lambda_1|+\ldots+|\lambda_q|\quad \text{for }
\lambda\in\mathbb C^q;\\
 &\tilde t:=\,\min(t_1,1)\ge0 \,\,\text{ for }\,
t=(t_1,\ldots,t_q)\in C_q\,.\end{align*}
The action of the Weyl group $W$ of type $BC_q$ extends in a natural way to
$\mathbb C^q$. We write
\[\rho:=\rho^{BC}(p)\] for the half sum \eqref{rho-BC}. Moreover, $co(W.\rho)\subset \mathbb R^q$
 denotes the convex hull of the $W$-orbit of $\rho$.

\smallskip
Let us recacpitulate the following known properties of  $\phi_\lambda^p$:

\begin{lemma}\label{bounded-est-BC}
\begin{enumerate}\itemsep=-1pt
\item[\rm{(1)}] For all $t\in C_q$,  $\lambda\in\mathbb C^q$, and $p\in\mathbb
  R$ with $p\geq q$,
$$  \Bigl|\phi_{\lambda-i\rho}^p(t)\Bigr|\le
 e^{\max_{w\in W}{\rm  Im}\> \langle w\lambda, t\rangle}.$$
\item[\rm{(2)}] $\phi_{\lambda}^p$ is bounded  if and only if 
$\,{\rm Im}\>\lambda\in co(W.\rho)$. Moreover, in this case
$\|\phi_{\lambda}^p\|_\infty=1$.
\item[\rm{(3)}] If $\lambda$ is purely imaginary, then $\phi_{\lambda}^p$ is real-valued and strictly positive on $C^q$. 
\end{enumerate}
\end{lemma}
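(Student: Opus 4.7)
The three parts are standard properties of Heckman-Opdam hypergeometric functions at the nonnegative multiplicity $k_p$; my plan is to derive them from the tools already built in this paper, with citations for the deeper input to (2).

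For part (1) I would first treat the spherical case $p\in\mathbb Z$, $p\ge 2q$. Under the shift $\lambda\mapsto\lambda-i\rho$ the exponent in the Harish-Chandra formula (\ref{harish1}) simplifies, since $i(\lambda-i\rho)-\rho = i\lambda$, to
\[\phi_{\lambda-i\rho}^p(a_t)=\int_K e^{i\langle\lambda,H(a_tk)\rangle}\,dk,\qquad |\phi_{\lambda-i\rho}^p(a_t)|\le\sup_{k\in K}e^{-\langle\mathrm{Im}\lambda,H(a_tk)\rangle}.\]
I would then invoke Kostant's convexity theorem $H(a_tk)\in\mathrm{co}(W.t)$ together with the fact that the Weyl group of $BC_q$ contains $-I$, reducing the supremum to $e^{\max_{w\in W}\mathrm{Im}\langle w\lambda,t\rangle}$. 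For real $p>2q-1$ I would repeat the argument on the integral representation of Theorem \ref{int-rep}, which after the $\rho$-shift reads
\[\phi_{\lambda-i\rho}^p(t)=\int_{B_q\times U_0(q,\mathbb F)}\Delta_{i\lambda/2}(g_t(u,w))\,dm_p(w)du\]
with integrand of modulus $\Delta_{-\mathrm{Im}\lambda/2}(g_t(u,w))$ (since $g_t(u,w)\in\Omega_q$), and majorize this pointwise by the desired exponential via an explicit singular-value bound on $\cosh\underline t+\sinh\underline t\,w$ for $w\in B_q$.

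For part (2), both the characterization and the normalization $\|\phi_\lambda^p\|_\infty=1$ follow from the Helgason-Johnson theorem (classical in the spherical case) and its extension to Heckman-Opdam hypergeometric functions with arbitrary nonnegative multiplicities; for the latter I would cite \cite{NPP}. As a consistency check, the bound $\|\phi_\lambda^p\|_\infty\le 1$ also drops out of Theorem \ref{int-rep} by taking moduli inside the integral: this gives $|\phi_\lambda^p(t)|\le\phi_{i\mathrm{Im}\lambda}^p(t)$, and the right-hand side is a bounded positive function (by (3)) with $\phi_{i\mathrm{Im}\lambda}^p(0)=F_{BC}(-\mathrm{Im}\lambda,k_p;0)=1$. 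For part (3) I would read the conclusion off Theorem \ref{int-rep} directly: for $\lambda=i\eta$ with $\eta\in\mathbb R^q$, the exponent $(i\lambda-\rho)/2=-(\eta+\rho)/2$ is real, so $\Delta_{-(\eta+\rho)/2}(g_t(u,w))$ is a strictly positive real number throughout the domain of integration, whence $\phi_{i\eta}^p(t)>0$; reality is in any case automatic from the real Taylor coefficients of $F_{BC}(\,\cdot\,,k_p;\,\cdot\,)$.

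The main obstacle I anticipate is the pointwise majorization step in the non-integer-$p$ case of (1): Kostant's convexity theorem has no immediate analog on the matrix ball $B_q$, so the needed estimate on the top singular values of $\cosh\underline t+\sinh\underline t\,w$ must be established by explicit linear-algebra inequalities (a multiplicative Weyl-type singular-value bound combined with a Hadamard-Fischer estimate on principal minors of $g_t(u,w)$). Should this prove unwieldy, a fallback is to extend the spherical bound from integer to real $p$ by analytic continuation; Carleson's Theorem \ref{continuation} yields only equalities, however, and would need to be supplemented by a Phragm\'en--Lindel\"of-type maximum-modulus argument applied separately to the two sides.
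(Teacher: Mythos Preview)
The paper's own proof of this lemma is nothing more than three citations: part (1) is Corollary~3.4 of \cite{RKV}, part (2) is Theorem~5.4 of \cite{R2} combined with Theorem~4.2 of \cite{NPP}, and part (3) is Lemma~3.1 of \cite{Sch}. The lemma is a compendium of known facts about Heckman--Opdam hypergeometric functions at nonnegative multiplicities, recorded here for later use.

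Your route is genuinely different: you try to rederive everything from the integral representations developed in the paper itself. Two comments are in order.

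First, the obstacle you anticipate in (1) is not an obstacle. The argument $g_t(u,w)$ does not depend on $p$; only the measure $m_p$ does. Fix one integer $p_0\ge 2q$. Then the map $U(p_0,\mathbb F)\ni v\mapsto \sigma_0^* v\sigma_0$ is onto $\overline{B_q}$, so for every $(u,w)\in U_0(q,\mathbb F)\times B_q$ there is a $k\in K$ (for this $p_0$) with $g_t(u,w)=x_t(k)$, and hence $\Delta_{-\mathrm{Im}\lambda/2}(g_t(u,w))=e^{-\langle\mathrm{Im}\lambda,H(a_tk)\rangle}$. Kostant's convexity and $-I\in W$ then give the desired pointwise bound $\Delta_{-\mathrm{Im}\lambda/2}(g_t(u,w))\le e^{\max_{w\in W}\mathrm{Im}\langle w\lambda,t\rangle}$ for \emph{all} $(u,w)$, and this immediately feeds into the integral representation of Theorem~\ref{int-rep} for every real $p>2q-1$. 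No Hadamard--Fischer or Phragm\'en--Lindel\"of machinery is needed.

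Second, there is a genuine gap in coverage. The lemma is stated for all real $p\ge q$ (that is, all nonnegative $k_p$), whereas Theorem~\ref{int-rep} is only available for $p>2q-1$ (with a degenerate version at $p=2q-1$). Your arguments for (1) and (3) therefore miss the range $q\le p<2q-1$ with $p$ non-integer, where neither the Harish--Chandra integral nor the matrix-ball integral exists. For this range one cannot avoid appealing to general results on $F_{BC}$ at nonnegative multiplicities, which is exactly what the paper's citations to \cite{RKV} and \cite{Sch} do. If you are content to restrict parts (1) and (3) to $p>2q-1$ (which is all that Theorems~\ref{main-p-infty} and \ref{absch-q-p-infty} actually require), then your approach goes through cleanly with the observation above.
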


\begin{proof} (1) follows from Corollary 3.4 of \cite{RKV}. For part (2) we refer to Theorem 5.4
  of \cite{R2} and Theorem 4.2 of  \cite{NPP} (the proof of the only-if-part in
  \cite{R2} contains a gap). Part (3) follows from Lemma 3.1. of \cite{Sch}.
\end{proof}

Notice that by Corollary \ref{first-limit}, the same estimates as in Lemma
 \ref{bounded-est-BC}
hold for the function $\psi_{\lambda-i\rho^A}(t).$
The following theorem is the main result of this section:

\begin{theorem}\label{main-p-infty}
There  exists a universal constant $C=C(\mathbb F, q)$ as follows:
\begin{enumerate}
\item[\rm{(1)}] For all $p>2q-1$, $t\in C_q$ and $\lambda\in\mathbb C^q$,
$$  \Bigl|\phi_{\lambda-i\rho}^p(t)- \psi_{\lambda-i\rho^A}(t)\Bigr|\le
C\cdot \frac{\|\lambda\|_1\cdot\tilde t}{p^{1/2}}\cdot
 e^{\max_{w\in W}{\rm  Im}\> \langle w\lambda, t\rangle}.$$
\item[\rm{(2)}]
 Let $p>2q-1$, $t\in C_q$, and $\lambda\in\mathbb C^q$ such that $\,\text{Im}\,\lambda-\rho\,$ is contained in  $co(W.\rho)$, i.e., 
$\phi_{\lambda-i\rho}^p$ is bounded on $C_q$. Then
$$  \Bigl|\phi_{\lambda-i\rho}^p(t)- \psi_{\lambda-i\rho^A}(t)\Bigr|\le
C\cdot \frac{\|\lambda\|_1\cdot\tilde t}{p^{1/2}}.$$
In particular, for these spectral parameters $\lambda$ the order of
convergence is uniform of order $p^{-1/2}$ in $t\in C_q$.
\end{enumerate}
\end{theorem}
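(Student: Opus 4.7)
Plan: The starting point is the integral representation of Theorem \ref{int-rep} together with the formula \eqref{spherical_A}. Because $\rho = \rho^{BC}(p)$ cancels the shift in the exponent of the power function (and similarly $\rho^A$ in $\psi$), the two functions take the nearly identical forms
\begin{align*}
\phi_{\lambda-i\rho}^p(t) &= \int_{U_0(q,\mathbb F)\times B_q} \Delta_{i\lambda/2}(g_t(u,w))\, du\, dm_p(w),\\
\psi_{\lambda - i\rho^A}(t) &= \int_{U_0(q,\mathbb F)} \Delta_{i\lambda/2}(g_t(u, 0))\, du,
\end{align*}
so the problem reduces to estimating $\int[\Delta_{i\lambda/2}(g_t(u,w)) - \Delta_{i\lambda/2}(g_t(u,0))]\, du\, dm_p(w)$, i.e.\ controlling the dependence of the integrand on $w\in B_q$ and exploiting that $dm_p$ concentrates at $w=0$ as $p\to\infty$.

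The structural input driving the estimate is the factorization $\cosh\underline t + \sinh\underline t\cdot w = \cosh\underline t\cdot(I+\tanh\underline t\cdot w)$ together with the elementary bound $\tanh t_i \leq \min(t_i,1) \leq \tilde t$; this is the ultimate source of the factor $\tilde t$. I would then apply the fundamental theorem of calculus along the path $s\mapsto sw$, $s\in[0,1]$. Since $g_t(u,sw)\in \Omega_q$, all principal minors are strictly positive and
$$\partial_s \Delta_{i\lambda/2}(g_t(u,sw)) = \Delta_{i\lambda/2}(g_t(u,sw))\cdot \tfrac{i}{2}\sum_{r=1}^q (\lambda_r-\lambda_{r+1})\,\partial_s\log \Delta_r(g_t(u,sw)) \quad (\lambda_{q+1}:=0).$$
The key technical lemma I would establish is the uniform estimate
$$\big|\partial_s\log \Delta_r(g_t(u,sw))\big| \leq C\,\tilde t\,\|w\|,\qquad s\in[0,1],\;u\in U_0(q,\mathbb F),$$
proved by matrix calculus from $\partial_s\log\Delta_r = \mathrm{tr}((P_r g P_r^*)^{-1}P_r(\partial_s g)P_r^*)$ where $P_r$ projects onto the first $r$ coordinates: after substituting the factorization, the large factor $\cosh\underline t$ cancels between the upper-left $r\times r$ blocks of $g_t(u,sw)$ and $g_t(u,0)$ (explicitly for $r=q$, via Cauchy--Binet for $r<q$), leaving dependence on $w$ only through $\tanh\underline t\cdot w$.

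For Part~(1), I combine this with the pointwise modulus bound $|\Delta_{i\lambda/2}(g_t(u,sw))| \leq e^{\max_{w\in W}\,\mathrm{Im}\,\langle w\lambda, t\rangle}$, which follows from the Iwasawa identity $\Delta_r(g^*g) = e^{2(t_1+\ldots+t_r)}$ on $GL(q,\mathbb F)=KAN$ together with a Kostant-type convexity argument applied to the Iwasawa projection of $M_{sw} u$. This yields an integrand bound of $C\|\lambda\|_1\tilde t\|w\|\cdot e^{\max_W\,\mathrm{Im}\,\langle w\lambda,t\rangle}$; integrating and using $\int_{B_q}\|w\|\,dm_p(w)\leq (\int\|w\|^2 dm_p)^{1/2}\leq C p^{-1/2}$, obtained by Cauchy--Schwarz from the direct second-moment estimate for the Beta-type density $\Delta(I-w^*w)^{pd/2-\gamma}$, delivers Part~(1). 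For Part~(2), the hypothesis $\mathrm{Im}\,\lambda - \rho \in co(W.\rho)$ together with the convex-hull result on the Weyl orbit of $\rho$ proved in the appendix (Section~6) upgrades the pointwise estimate on $|\Delta_{i\lambda/2}(g_t(u,sw))|$ to the uniform bound $\leq 1$, removing the exponential factor and yielding a rate uniform in $t\in C_q$.

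The main obstacle I anticipate is the uniform bound $|\partial_s\log\Delta_r(g_t(u,sw))|\leq C\tilde t\|w\|$ for the non-leading minors ($r<q$): for $r=q$ the explicit formula $\Delta_q(g_t(u,sw)) = |\det\cosh\underline t|^2|\det(I+s\tanh\underline t\,w)|^2$ makes the cancellation of $\cosh\underline t$ transparent, but for $r<q$ the principal minor couples $u$ and $w$ nontrivially, so one must verify that the cancellation is uniform in $u\in U_0(q,\mathbb F)$ and $s\in[0,1]$. A secondary difficulty is the pointwise bound $|\Delta_{i\lambda/2}(g_t(u,sw))|\leq 1$ required for Part~(2), whose proof is precisely the raison d'\^etre of the appendix's convex-hull lemma.
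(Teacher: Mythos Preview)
Your overall plan---compare the integrands of the two integral representations and exploit the concentration of $m_p$ at $w=0$---is the right one, and the paper proceeds the same way. But two of your key technical claims are actually false, and the paper's argument is organized precisely to circumvent them.

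\medskip

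\textbf{The derivative bound fails at the boundary of $B_q$.} Your central lemma $|\partial_s\log\Delta_r(g_t(u,sw))|\leq C\,\tilde t\,\|w\|$ is not true, not even for $r=q$. Already in rank one, with $g_t(sw)=(\cosh t + sw\sinh t)^2$ one has
\[
\partial_s\log g_t(sw)=\frac{2w\tanh t}{1+sw\tanh t},
\]
which blows up as $w\to -1$ and $t\to\infty$. The correct estimate (this is Lemma~\ref{lemma-eins2} in the paper) is
\[
\bigl|\log\Delta_r(\widetilde g_t(u,w))-\log\Delta_r(\widetilde g_t(u,0))\bigr|\le 2r\,\frac{\tilde t\,\sigma_1(w)}{1-\tilde t\,\sigma_1(w)},
\]
with the singular factor $1/(1-\sigma_1(w))$. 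This factor forces one to control not $\int\|w\|\,dm_p$ but rather $\int \sigma_1(w)^{2n}\Delta(I-w^*w)^{-2n}\,dm_p$; the paper does this in Lemma~\ref{lemma-eins4}, obtaining the required $O(p^{-n})$ bound. For Part~(1), your pointwise Kostant bound on $|\Delta_{i\lambda/2}(g_t(u,sw))|$ is correct and could be combined with the corrected difference estimate and Cauchy--Schwarz to get the result---this would be close to (and slightly simpler than) the paper's $n=1$ case of Theorem~\ref{absch-q-p-infty}.

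\medskip

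\textbf{The pointwise bound $\leq 1$ for Part~(2) is false.} Your claim that $\text{Im}\,\lambda-\rho\in co(W.\rho)$ implies $|\Delta_{i\lambda/2}(g_t(u,sw))|\leq 1$ pointwise is wrong. In rank one with $\text{Im}\,\lambda\in(0,2\rho]$ one has $|\Delta_{i\lambda/2}(g_t(sw))|=|\cosh t+sw\sinh t|^{-\text{Im}\,\lambda}$, which is $\approx e^{t\,\text{Im}\,\lambda}$ for $w$ near $-1$ and large $t$. Only the \emph{integrated} function $\phi^p_{\lambda-i\rho}$ is bounded by $1$ (Lemma~\ref{bounded-est-BC}(2)); this relies on the $e^{-\rho(H(a_tk))}$ weight in the Harish-Chandra integral and does not descend to the individual integrand. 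The paper's route is therefore essential here: use H\"older with exponent $\frac{2n}{2n-1}$ so that the modulus term re-integrates to $\phi^p_{\frac{2n}{2n-1}i\,\text{Im}\,\lambda-i\rho}(t)^{(2n-1)/2n}$, and then invoke the convex-hull Lemma~\ref{conv-hull} to guarantee that for $n$ large enough the shifted spectral parameter $\frac{2n}{2n-1}\text{Im}\,\lambda-\rho$ still lies in $co(W.\rho)$, so this $\phi$ is bounded by $1$. The appendix lemma is not there to bound the integrand pointwise; it is there to ensure that the slightly enlarged spectral parameter produced by H\"older still yields a bounded hypergeometric function.
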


 We briefly discuss this result in the rank-one case $q=1$.
Here the Heckman-Opdam functions $\phi_\lambda^p$
 are   Jacobi functions $\phi_\lambda^{(\alpha,\beta)}$
 as studied in Koornwinder \cite{K1}. More precisely,
$$\phi_\lambda^p(t)=\phi_\lambda^{(\alpha,\beta)}(t) 
\quad\quad\text{with }\,
\alpha=dp/2, \> \beta=d/2-1, \> d=1,2,4$$
and $\rho=\alpha+\beta+1=d(p+1)/2$.
Furthermore,
 \[\psi_\lambda(t)=e^{i\lambda\cdot\ln(\cosh t)} = (\cosh t)^{i\lambda}\] independently of 
$d$, and
 $\rho^A=0$.
Thus,  Theorem \ref{main-p-infty} implies for $q=1$ the following

\begin{corollary}\label{main-p-1-infty}
There exists a constant $C>0$ as follows:
\begin{enumerate}
\item[\rm{(1)}] For $\beta=-1/2,0,1$, all $t\in [0,\infty[$,  $\alpha>0$,
 and $\lambda\in\mathbb C,$
$$\Bigl|\phi_{\lambda-i\rho}^{(\alpha,\beta)}(t)-  (\cosh t)^{i\lambda}\Bigr|\le \,C\cdot
 \frac{|\lambda|\, \min(t,1)}{\sqrt\alpha}\cdot e^{|{\rm  Im}\> \lambda|\cdot t}.$$
\item[\rm{(2)}]
 Let  $\beta=-1/2,0,1$, $t\in [0,\infty[$,   $\alpha>0$, and
 $\lambda\in\mathbb C$ with  $\,\text{Im}\,\lambda\in[0,2\rho]$.
 Then
$$\Bigl|\phi_{\lambda-i\rho}^{(\alpha,\beta)}(t)-  (\cosh t)^{i\lambda}\Bigr|\leq
 C\cdot
 \frac{|\lambda|\, \min(t,1)}{\sqrt\alpha}.$$
\end{enumerate}
\end{corollary}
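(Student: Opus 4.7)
The plan is to derive Corollary~\ref{main-p-1-infty} directly as the rank-one specialization of Theorem~\ref{main-p-infty}; no independent argument is needed. I would proceed in three bookkeeping steps followed by a check of normalizations.

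First I would reduce $\psi_\lambda(t)$ to its rank-one form. For $q=1$ the type-$A_{q-1}$ root system is empty, so $\rho^A=0$, and $U_0(1,\mathbb F)$ acts trivially by conjugation on the scalar $\cosh^2 t$. The integral in \eqref{psi_int} therefore collapses to
\[
\psi_\lambda(t)\,=\,\Delta_{i\lambda/2}(\cosh^2 t)\,=\,(\cosh t)^{i\lambda},
\]
independently of $d$, which matches the formula stated in the corollary.

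Second I would identify $\phi_\lambda^p$ with Koornwinder's Jacobi function. In rank one, the Heckman--Opdam hypergeometric function associated with $R=2\cdot BC_1$ and multiplicity $k_p=(d(p-1)/2, (d-1)/2, d/2)$ coincides, after the standard affine identification of the two spectral parametrizations, with the Jacobi function $\phi_\lambda^{(\alpha,\beta)}$ of parameters $\alpha=dp/2$, $\beta=d/2-1$; see \cite{HS} and \cite{K1}. The three admissible dimensions $d\in\{1,2,4\}$ then give precisely $\beta\in\{-1/2, 0, 1\}$.

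Third I would translate the estimates of Theorem~\ref{main-p-infty} into the scalar setting. In rank one $\|\lambda\|_1=|\lambda|$ and $\tilde t=\min(t,1)$; the Weyl group of $BC_1$ is $W=\{\pm 1\}$, so $\max_{w\in W}{\rm Im}\,\langle w\lambda,t\rangle=|{\rm Im}\,\lambda|\cdot t$ and $co(W.\rho)=[-\rho,\rho]$. Hence the bounded-range condition ${\rm Im}\,\lambda-\rho\in co(W.\rho)$ becomes ${\rm Im}\,\lambda\in[0,2\rho]$. Because $\alpha=dp/2$ with $d\ge 1$, one has $p^{-1/2}\le \sqrt{2}\,\alpha^{-1/2}$, so the rate $p^{-1/2}$ may be replaced by $\alpha^{-1/2}$ at the cost of enlarging the universal constant $C$. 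Substituting these translations into parts~(1) and~(2) of Theorem~\ref{main-p-infty} yields both parts of the corollary.

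The main potential obstacle is purely notational: one must carefully match the paper's $\rho=\rho^{BC}$ (built from the rescaled root system $R=2\cdot BC_1$) against the Jacobi $\rho=\alpha+\beta+1=d(p+1)/2$ used in the corollary, together with the corresponding rescaling of the spectral variable $\lambda$. Once this identification is pinned down, nothing beyond substitution is required.
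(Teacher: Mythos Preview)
Your proposal is correct and matches the paper's approach exactly: the paper simply states that ``Theorem~\ref{main-p-infty} implies for $q=1$ the following'' and gives no separate proof, so the corollary is obtained by precisely the rank-one bookkeeping you describe. One small caveat worth being aware of (present in the paper itself, not a flaw in your argument): Theorem~\ref{main-p-infty} is stated for $p>2q-1=1$, which translates to $\alpha>d/2$ rather than the full range $\alpha>0$ claimed in the corollary; this is a minor imprecision in the paper's statement, not something your reduction could repair.
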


\begin{remarks}
\begin{enumerate}
\item[\rm{(1)}] For ${\rm Im}\>\lambda=0\,$ and all $\beta\ge -1/2$,
 Corollary \ref{main-p-1-infty}(2) was proven
  in \cite{V2}. The proof there  relies 
on  the well-known integral
 representation for the  Jacobi functions 
for  $\alpha\ge\beta\ge -1/2$ in \cite{K1} and is similar to
that given here.
  Corollary \ref{main-p-1-infty} (2) for
${\rm Im}\>\lambda=0$ is used in  \cite{V2}
to derive a central limit theorem for the hyperbolic distances of radial random
walks on
hyperbolic spaces from their starting point when the number of time steps as
well as the dimensions of the hyperbolic spaces tend to infinity. Similar 
results can be derived from Theorem  \ref{main-p-infty} for $q\ge 2$.

\item[\rm{(2)}]  Corollary \ref{main-p-1-infty}
 corresponds to the convergence of the well-known one-dimensional Jacobi
 convolutions
 $*_{(\alpha,\beta)}$ to
 a semigroup convolution on $[0,\infty[$  in \cite{V1} where the
 multiplicative functions of the limit semigroup are precisely the functions 
$t\mapsto (\cosh t)^{i\lambda}\,$; i.e., the convergence of 
the convolution structures $*_{(\alpha,\beta)}$ for $\alpha\to\infty$ corresponds to the convergence
of the
 multiplicative functions. The same picture appears for $q>1$; see \cite{R2} for
the explicit
convolution and
\cite{RKV} for the corresponding limit transition.
 In \cite{K2}, a corresponding result for polynomials was derived.

\item[\rm{(3)}]  There are similar limit results to those of Theorem  \ref{main-p-infty} for 
 Dunkl-type Bessel functions of types A and B,  and for
Bessel functions on matrix cones   with applications 
in probability; see \cite{RV2}, \cite{RV3}. 
\end{enumerate}
\end{remarks}

We  now turn to the proof of Theorem
\ref{main-p-infty}.
In fact, our main result is essentially a consequence of Lemma
\ref{bounded-est-BC}
 and the following  technical variant of Theorem
\ref{main-p-infty}:

\begin{theorem}\label{absch-q-p-infty}
For each $n\in\mathbb N$ there  is a  constant $C=C(\mathbb F, q,n)$
 such that for all $p>2q-1$,
 $t\in C_q$ and
$\lambda\in\mathbb C^q$,  
\begin{equation}\label{absch1}
\Bigl|\phi_{\lambda-i\rho}^p(t)- \psi_{\lambda-i\rho^A}(t)\Bigr|\le
 C\cdot \Bigl(\phi^p_{\!\frac{2n}{2n-1}i\text{Im}
   \lambda-i\rho}(t)^{\frac{2n-1}{2n}}
 +\psi_{\frac{2n}{2n-1}i\text{Im} \lambda-i\rho^A}(t)^{ \frac{2n-1}{2n}}\Bigr)
 \frac{\|\lambda\|_1\cdot\tilde t}{p^{1/2}}\,.
\end{equation}
\end{theorem}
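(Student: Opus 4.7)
The plan is to compare the two integral representations via a pointwise estimate and H\"older's inequality. By Theorem~\ref{int-rep} and~\eqref{spherical_A}, using $g_t(u,0)=u^{-1}\cosh^2\underline t\cdot u$,
$$\phi_{\lambda-i\rho}^p(t)-\psi_{\lambda-i\rho^A}(t)=\int_{B_q\times U_0(q,\mathbb F)}\bigl[\Delta_{i\lambda/2}(g_t(u,w))-\Delta_{i\lambda/2}(g_t(u,0))\bigr]\,dm_p(w)\,du,$$
and for $X\in\Omega_q$ one has $|\Delta_{i\lambda/2}(X)|=\Delta_{-\operatorname{Im}\lambda/2}(X)$. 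For $X,Y\in\Omega_q$ I would write $z_r:=\log(\Delta_r(X)/\Delta_r(Y))\in\mathbb R$ and $\alpha:=\tfrac12\sum_{r=1}^q(\lambda_r-\lambda_{r+1})z_r$ (with $\lambda_{q+1}:=0$), so that $\Delta_{i\lambda/2}(X)/\Delta_{i\lambda/2}(Y)=e^{i\alpha}$ with $|e^{i\alpha}|=\Delta_{-\operatorname{Im}\lambda/2}(X)/\Delta_{-\operatorname{Im}\lambda/2}(Y)$. The elementary bound $|e^{i\alpha}-1|\le|\alpha|\max(1,|e^{i\alpha}|)$ together with $|\alpha|\le\|\lambda\|_1\max_r|z_r|$ yields
$$|\Delta_{i\lambda/2}(X)-\Delta_{i\lambda/2}(Y)|\le C\|\lambda\|_1\cdot L(X,Y)\cdot\bigl(\Delta_{-\operatorname{Im}\lambda/2}(X)+\Delta_{-\operatorname{Im}\lambda/2}(Y)\bigr),\qquad L(X,Y):=\max_r|z_r|.$$

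Applying H\"older's inequality with conjugate exponents $2n$ and $2n/(2n-1)$ separately to each of the two resulting contributions (and using Jensen over the $u$-integral for the $\Delta_{-\operatorname{Im}\lambda/2}(g_t(u,0))$-term), the identity $\Delta_{-\operatorname{Im}\lambda/2}^{\,2n/(2n-1)}=\Delta_{-n\operatorname{Im}\lambda/(2n-1)}$ together with the integral representations identifies the ``power-function'' factors as precisely $\phi^p_{(2n/(2n-1))i\operatorname{Im}\lambda-i\rho}(t)^{(2n-1)/(2n)}$ and $\psi_{(2n/(2n-1))i\operatorname{Im}\lambda-i\rho^A}(t)^{(2n-1)/(2n)}$. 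This reduces the theorem to the moment estimate $\|L(g_t(\cdot,\cdot),g_t(\cdot,0))\|_{L^{2n}(dm_p du)}\le C\tilde t/\sqrt p$.

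For this moment estimate, the key factorization is $V(s):=\cosh\underline t+s\sinh\underline t\,w=\cosh\underline t(I+sM)$ with $M:=\tanh\underline t\,w$, so that each entry $M_{ij}=\tanh t_i\cdot w_{ij}$ carries the factor $\tanh t_i\le\min(t_i,1)\le\tilde t$. Writing $X_s:=g_t(u,sw)$ and using $\log(\Delta_r(X_1)/\Delta_r(X_0))=\int_0^1\operatorname{tr}\bigl((X_s)_r^{-1}\partial_s(X_s)_r\bigr)ds$, a trace manipulation rewrites the integrand as $2\operatorname{Re}\operatorname{tr}\bigl(P_A(s)\cdot\cosh\underline t\,M(I+sM)^{-1}\cosh^{-1}\underline t\bigr)$, where $P_A(s)$ is the orthogonal projection onto $\operatorname{range}\bigl(\cosh\underline t(I+sM)U_r\bigr)$. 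Combining this structural identity with $\|(I+sM)^{-1}\|\le(1-\|w\|)^{-1}$ for $w\in B_q$, the rotation invariance of $m_p$ under $w\mapsto Rw$ for $R\in U(q,\mathbb F)$, and the concentration of $m_p$ at scale $p^{-1/2}$ (standard moments of the density $\Delta(I-w^*w)^{pd/2-\gamma}$), yields the required $\tilde t/\sqrt p$ decay.

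The main obstacle is precisely extracting the $\tilde t$-factor in this $L^{2n}$-estimate: the ambient $\cosh^2\underline t$ separates $(I+sM^*)$ and $(I+sM)$ in $(X_s)_r$, and naive operator-norm bounds on $\cosh\underline t\,M(I+sM)^{-1}\cosh^{-1}\underline t$ retain $\sinh t_1$ rather than $\tanh t_1$. The projection identity together with averaging in $u$ appear essential. As a consistency check, at $u=I$ and $s=0$ the trace simplifies to $2\operatorname{Re}\sum_{i=1}^r\tanh t_i\,w_{ii}$, confirming the emergence of $\tilde t$ in the simplest case.
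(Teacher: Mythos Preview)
Your overall architecture is exactly that of the paper: compare the two integral representations, bound the pointwise difference of the power functions by $\|\lambda\|_1\cdot L$ times $\Delta_{-\operatorname{Im}\lambda/2}(X)+\Delta_{-\operatorname{Im}\lambda/2}(Y)$, apply H\"older with exponents $(2n,\tfrac{2n}{2n-1})$, and identify the resulting power-function integrals as $\phi^p$ and $\psi$ at the shifted spectral parameter. That part is correct and matches the paper's Lemma~\ref{lemma-eins3} essentially verbatim.

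The gap is in your moment estimate for $L$, and you have in fact named it yourself: with the argument $g_t(u,w)=u^{-1}(I+M^*)\cosh^2\underline t\,(I+M)u$, the $\cosh^2\underline t$ sits between the two factors, and you cannot extract $\tilde t$ by operator-norm bounds alone. Your proposed remedy via the projection identity $\operatorname{tr}\bigl(P_A(s)\cosh\underline t\,M(I+sM)^{-1}\cosh^{-1}\underline t\bigr)$ does not visibly close the gap either, since the conjugation by $\cosh\underline t$ is unbounded in $t$ and the projection $P_A(s)$ need not cancel it; averaging in $u$ helps at $s=0$ (your consistency check) but there is no argument given for general $s$, nor for the $L^{2n}$-norm rather than the mean. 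As stated this remains a heuristic.

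The paper resolves this by a trick you are missing: use the \emph{other} integrand $\widetilde g_t(u,w)$ from Theorem~\ref{int-rep}. Then the factorization reads
\[
\widetilde g_t(u,w)\,=\,b\,(I+\widetilde w)(I+\widetilde w)^*\,b^*,\qquad b=u^*\cosh\underline t,\quad \widetilde w=\tanh\underline t\cdot w,
\]
with $\cosh\underline t$ on the \emph{outside}. Now $\sigma_1(\widetilde w)\le\tanh t_1\cdot\sigma_1(w)\le\tilde t\,\sigma_1(w)$, so all singular values of $I+\widetilde w$ lie in $[1-\tilde t\,\sigma_1(w),\,1+\tilde t\,\sigma_1(w)]$, whence the Loewner sandwich
\[
(1-\tilde t\,\sigma_1(w))^2\,bb^*\ \le\ \widetilde g_t(u,w)\ \le\ (1+\tilde t\,\sigma_1(w))^2\,bb^*.
\]
Monotonicity of principal minors gives $|z_r|\le 2r\,\log\!\frac{1}{1-\tilde t\,\sigma_1(w)}$ and hence $L\le 2q\,\tilde t\,\sigma_1(w)/(1-\sigma_1(w))$, with the $\tilde t$-factor now manifest, pointwise and \emph{without} any $u$-averaging. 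The remaining $L^{2n}(dm_p)$-bound then reduces to the explicit Gamma-function computation of Lemma~\ref{lemma-eins4}, which yields order $p^{-1/2}$ after taking the $2n$-th root. Switching from $g_t$ to $\widetilde g_t$ is the single move that eliminates your obstacle.
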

Notice that the functions $\phi, \psi\,$  on the right side take positive values by Lemma \ref{bounded-est-BC}. 
In fact, Theorem \ref{main-p-infty}(1) follows immediately from Lemma 
\ref{bounded-est-BC}(1) and Theorem \ref{absch-q-p-infty} with $n=1$.
For the proof of Theorem \ref{main-p-infty}(2), consider $\lambda\in\mathbb
C^q$ with $\,\text{Im}\lambda-\rho\in co(W.\rho)$.
As $\phi_{\lambda}^p$ is $W$-invariant in the spectral variable $\lambda$  and as the mapping $\lambda\mapsto -\lambda$
is an element of $W$, we may assume without loss of generality that
$\,\text{Im}\lambda -\rho\in -C_q\,.$
Now choose $\epsilon_0=\epsilon_0(q)>0$ according to 
the following  Lemma \ref{conv-hull}, and choose $n\in\mathbb N$ such that
 $\epsilon:=(2n-1)^{-1}\le\epsilon_0$. Lemma \ref{conv-hull} for $\,y:=\text{
Im}\lambda-\rho$
thus implies that
$$\frac{2n}{2n-1}{\rm Im}\> \lambda-\rho=(1+\epsilon)\text{Im} \lambda-\rho=
(1+\epsilon)y+\epsilon \rho\in co(W.\rho).$$
This fact,
 Lemma \ref{bounded-est-BC}(2), and   Theorem \ref{absch-q-p-infty} then lead
to 
 Theorem \ref{main-p-infty}(2) as claimed.

\begin{lemma}\label{conv-hull}
For each dimension $q$ there exists a constant $\epsilon_0=\epsilon_0(q)>0$
such that for all $0<\epsilon\le\epsilon_0$, all $\rho$ in the
interior of $C_q$, and all $y\in  co(W.\rho)\cap(-C_q)$,
$$(1+\epsilon)y+\epsilon\rho\in  co(W.\rho).$$
\end{lemma}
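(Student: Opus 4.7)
The plan is to reduce the claim to a finite verification at the vertices of the convex polytope $Q:=-C_q\cap co(W.\rho)$. Since $y\mapsto(1+\epsilon)y+\epsilon\rho$ is affine and $co(W.\rho)$ is convex, any $y\in Q$ is a convex combination of vertices of $Q$; it therefore suffices to show $(1+\epsilon)v+\epsilon\rho\in co(W.\rho)$ for each such vertex $v$.

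The next step is to use the standard description of $co(W.\rho)$ for the hyperoctahedral Weyl group $W$ of type $BC_q$ by weak majorization of absolute values: $x\in co(W.\rho)$ if and only if $\sum_{i=1}^k|x|_{[i]}\le\sum_{i=1}^k\rho_i$ for every $k$, where $|x|_{[1]}\ge\cdots\ge|x|_{[q]}$ is the decreasing rearrangement of the absolute values of the components of $x$. Intersecting with $-C_q$ and saturating maximally many of these and of the chamber inequalities, one sees that the vertices of $Q$ are parametrized by sequences of cut points $0=m_0<m_1<\cdots<m_s\le q$ whose block averages $\bar\rho_{[m_{j-1}+1,m_j]}:=(\rho_{m_{j-1}+1}+\cdots+\rho_{m_j})/(m_j-m_{j-1})$ are strictly decreasing in $j$: the associated vertex has components $v_i=-\bar\rho_{[m_{j-1}+1,m_j]}$ on the $j$-th block and $v_i=0$ for $i>m_s$. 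At such a vertex the components of $(1+\epsilon)v+\epsilon\rho$ are piecewise of the form $\epsilon\rho_i-(1+\epsilon)\bar\rho_{[m_{j-1}+1,m_j]}$, so that the weak-majorization inequalities defining $co(W.\rho)$ reduce to elementary inequalities involving $\rho$ and $\epsilon$.

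The main obstacle is to choose a single positive $\epsilon_0=\epsilon_0(q)$ which handles all of these inequalities uniformly in the vertex type, in the partial-sum level, and in $\rho\in C_q^\circ$ --- in particular as $\rho$ approaches the walls of $C_q$. The tightest constraints come from the ``one-block'' vertices $V_k:=-\bar\rho_k(1,\ldots,1,0,\ldots,0)$ (with $k$ leading non-zero entries), where the largest absolute value of $(1+\epsilon)V_k+\epsilon\rho$ equals $(1+\epsilon)\bar\rho_k-\epsilon\rho_k$ and the inequality $\le\rho_1$ rewrites as
\[ \epsilon\;\le\;\frac{\rho_1-\bar\rho_k}{\bar\rho_k-\rho_k}\;=\;\frac{\sum_{j=2}^k(\rho_1-\rho_j)}{\sum_{j=1}^{k-1}(\rho_j-\rho_k)}. \]
A direct analysis shows that the infimum of this rational expression over $\rho\in C_q^\circ$ equals $1/(k-1)$ and is attained in the limit on step sequences $\rho\approx(\rho_1,\ldots,\rho_1,\rho_k)$, so that $\epsilon_0(q)\ge 1/(q-1)$ suffices for this family of vertices. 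The hard part will be the analogous but combinatorially more involved case analysis for the general block-averaged vertices and the higher partial-sum levels, which I expect to yield at worst the same quantitative lower bound and hence the universal constant $\epsilon_0(q)$ depending only on $q$.
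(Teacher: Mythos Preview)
Your strategy is sound and genuinely different from the paper's. The paper does not use the majorization description of $co(W.\rho)$ at all: instead it proves (Proposition~6.3) the equivalent statement $(1+\epsilon)y-\epsilon\rho\in co(W.\rho)$ for $y\in co(W.\rho)\cap C_q$ by induction on the rank, simultaneously for root systems of type $A$ and $B$. At an extreme point $y$ of $K=co(W.\rho)\cap C_q$ other than $0$ and $\rho$, the paper observes that $y$ lies on an intersection of reflecting hyperplanes and of facets of $co(W.\rho)$, and identifies the affine span of the relevant facet with a lower-rank picture to which the induction hypothesis applies (together with Lemma~6.3 for products). No explicit $\epsilon_0$ is extracted; only existence. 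Your approach, if completed, would give the quantitative value $\epsilon_0(q)=1/(q-1)$, which is strictly more information.

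That said, your proposal has two genuine gaps that you should close before the argument counts as a proof. First, the vertex description of $Q=-C_q\cap co(W.\rho)$ as ``block-averaged'' points is asserted (``one sees that\ldots'') but not proven. It is correct, but it is not entirely trivial: one has to argue that among the $2q$ defining half-spaces, a choice of $q$ saturated ones that actually gives a point of $Q$ must take the form ``$z_i=z_{i+1}$ inside each block, $\sum_{i\le m_j}|z_i|=\sum_{i\le m_j}\rho_i$ at the cut points, $z_i=0$ beyond $m_s$'', and that the monotonicity constraint on block averages is exactly what keeps the remaining inequalities satisfied for $\rho$ in the interior of $C_q$. Second, and more seriously, you only verify the first majorization constraint at the single-block vertices $V_k$ and then write ``I expect'' the general blocks and higher partial sums to be no worse. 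That expectation is correct---for a multi-block vertex the partial-sum constraints telescope over the blocks and each block contributes a term of the same shape as in your $V_k$ computation, so each constraint reduces to $\epsilon\le 1/(\ell-1)$ for the relevant block length $\ell\le q$---but you have to actually carry this out, including the bookkeeping needed to sort the absolute values of the perturbed coordinates $\epsilon\rho_i-(1+\epsilon)\bar\rho_{[m_{j-1}+1,m_j]}$ across different blocks (the ordering depends on $\epsilon$ and on $\rho$). Without this, the proof is only a plan.
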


The proof of this lemma will be postponed to an appendix at the end of this
paper.
We here only mention that for $q=1,2$ the lemma
 can be easily checked with $\epsilon_0=1$ at hand of a
picture, but for $q\geq 3$ the situation is more complicated, and the lemma is then
no longer true with  $\epsilon_0=1$.

\medskip

We now turn to the technical proof of Theorem  \ref{absch-q-p-infty}.
 We decompose it into several steps.
We first recall the integral representation (\ref{intrep-main}),  
\begin{equation}\label{wdh-int-rep1}
\phi_{\lambda-i\rho}^p(t) =\int_{B_q} \int_{U_0(q,\mathbb F)}
 \Delta_{i\lambda/2}(\widetilde g_t(u,w))\> dm_p(w)du
\end{equation}
with the probability measure $dm_p$ as in Section 2 and
\begin{equation}\label{bwtilde}
\widetilde g_t(u,w)=u^*(\cosh \underline t + \sinh \underline t\, w)(\cosh \underline t + \sinh\underline t\, w)^*u. 
\end{equation}
In order to analyze the principal minors $\Delta_1,\ldots,\Delta_q$ appearing
in the definition of the power function $\Delta_{i\lambda/2}$, we
use  the singular values 
$\sigma_1(a)\ge\sigma_2(a)\ge\ldots\ge \sigma_q(a)$ of a matrix $a\in M_q$
ordered by size, i.e., the
square roots of the eigenvalues of $a^*a$. We  need the following known
estimates  for singular values:

\begin{lemma}\label{lemma-eins1}
For all matrices $a_1,a_2\in M_q(\mathbb F)$ and $i=1,\ldots, q$,
$$|\sigma_i(a_1+a_2)-\sigma_i(a_1)|\le\sigma_1(a_2)
 \quad\quad\text{and} \quad\quad 
\sigma_i(a_1\cdot a_2)\le \sigma_i(a_1)\sigma_1(a_2).$$
\end{lemma}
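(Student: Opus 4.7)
The plan is to derive both estimates from the standard Weyl--Schmidt--Mirsky singular value inequalities in the real and complex cases, and then reduce the quaternionic case to the complex one via the $*$-embedding $\chi:M_q(\mathbb H)\to M_{2q}(\mathbb C)$ already employed in the proof of Corollary \ref{int-rep-spherical}.

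For $\mathbb F\in\{\mathbb R,\mathbb C\}$, both bounds fall out of the characterization
$$\sigma_i(a)=\min\bigl\{\|a-r\|_{\mathrm{op}}:\>r\in M_q(\mathbb F),\,\mathrm{rank}\,r\le i-1\bigr\},$$
together with $\sigma_1(\,\cdot\,)=\|\,\cdot\,\|_{\mathrm{op}}$. Picking $r$ attaining $\sigma_i(a_1)=\|a_1-r\|_{\mathrm{op}}$, the triangle inequality gives
$$\sigma_i(a_1+a_2)\le\|a_1+a_2-r\|_{\mathrm{op}}\le\sigma_i(a_1)+\sigma_1(a_2),$$
and interchanging the roles of $a_1$ and $a_1+a_2$ (replacing $a_2$ by $-a_2$) yields the reverse bound, hence the claimed estimate with an absolute value. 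For the second inequality, observing that $r':=ra_2$ still has rank $\le i-1$ yields
$$\sigma_i(a_1a_2)\le\|a_1a_2-r'\|_{\mathrm{op}}=\|(a_1-r)a_2\|_{\mathrm{op}}\le\sigma_i(a_1)\,\sigma_1(a_2).$$

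For $\mathbb F=\mathbb H$, recall from the proof of Corollary \ref{int-rep-spherical} that $\chi$ is a $*$-homomorphism of algebras and that $\chi_a^*\chi_a$ has the same eigenvalues as $a^*a$, each with doubled multiplicity. Hence the singular values of $\chi_a\in M_{2q}(\mathbb C)$ are exactly those of $a\in M_q(\mathbb H)$, each listed twice in the same decreasing order; in particular $\sigma_1(\chi_a)=\sigma_1(a)$ and $\sigma_{2i-1}(\chi_a)=\sigma_{2i}(\chi_a)=\sigma_i(a)$ for $i=1,\ldots,q$. Applying the complex case to $\chi_{a_1},\chi_{a_2}\in M_{2q}(\mathbb C)$ at index $2i-1$ and using $\chi_{a_1+a_2}=\chi_{a_1}+\chi_{a_2}$ together with $\chi_{a_1a_2}=\chi_{a_1}\chi_{a_2}$ then gives the quaternionic versions of both bounds at once.

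There is no real conceptual obstacle here; the lemma collects textbook facts about singular values, and the only mild nuisance is the uniform treatment over $\mathbb R,\mathbb C,\mathbb H$, which is handled cleanly by $\chi$.
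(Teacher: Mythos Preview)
Your proof is correct and follows the same structure as the paper's: treat $\mathbb F=\mathbb R,\mathbb C$ first and then reduce $\mathbb F=\mathbb H$ to the complex case via the $*$-embedding $\chi$ from Corollary \ref{int-rep-spherical}. The only difference is that the paper simply cites Theorem 3.3.16 of \cite{HJ} for the real and complex cases, whereas you supply a short self-contained argument via the low-rank approximation characterization of singular values.
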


\begin{proof} For $\mathbb F=\mathbb R,\mathbb C$ we refer to Theorem 3.3.16 of
  \cite{HJ}. The case $\mathbb F=\mathbb H$ can be reduced to $\mathbb F=\mathbb
C$
 by the same arguments as  in the second part of the proof of Corollary
\ref{int-rep-spherical}.
\end{proof}

\begin{lemma}\label{lemma-eins2} For $t\in C_q$, $w\in B_q$, $u\in
  U_0(q,\mathbb F)$ and $r=1,\ldots,q$,
\[\frac{\Delta_r( \widetilde g_t(u,w))}{\Delta_r( \widetilde g_t(u,0))}\in
 \big[(1-  \widetilde t\,\sigma_1(w))^{2r}, (1+  \widetilde t\,\sigma_1(w))^{2r}\big], \quad \text{with } \,
\widetilde t:=\min(t_1,1). \]
\end{lemma}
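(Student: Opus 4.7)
The plan is to reduce the principal minor $\Delta_r(\widetilde g_t(u,w))$ to the product of squared singular values of a suitable rectangular matrix, and then to absorb the $w$-dependence into a \emph{multiplicative} perturbation to which Lemma~\ref{lemma-eins1} applies directly.

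First I would write $A_w := \cosh\underline t + \sinh\underline t\,w$, so that $\widetilde g_t(u,w) = u^*A_wA_w^*u$, and let $V_r\in M_{q,r}(\mathbb F)$ be the matrix consisting of the first $r$ columns of $u$. The upper-left $r\times r$ block of $u^*A_wA_w^*u$ is $V_r^*A_wA_w^*V_r=(A_w^*V_r)^*(A_w^*V_r)$, so
\[
\Delta_r(\widetilde g_t(u,w))=\prod_{i=1}^r\sigma_i(A_w^*V_r)^2,\qquad \Delta_r(\widetilde g_t(u,0))=\prod_{i=1}^r\sigma_i(\cosh\underline t\,V_r)^2.
\]
(For $\mathbb F=\mathbb H$ this identity survives after passing to the Dieudonn\'e determinant, exactly as in the second half of the proof of Corollary~\ref{int-rep-spherical}.)

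The decisive step is the multiplicative factorization
\[
A_w^*V_r=\bigl(I_q+w^*\tanh\underline t\bigr)\cosh\underline t\,V_r,
\]
which is valid because the diagonal matrices $\sinh\underline t$ and $(\cosh\underline t)^{-1}$ commute, so that $w^*\sinh\underline t=w^*\tanh\underline t\cdot\cosh\underline t$. With this factorization in hand, Lemma~\ref{lemma-eins1} (used via adjoints in the form $\sigma_i(a_1a_2)\le\sigma_1(a_1)\sigma_i(a_2)$), together with $\sigma_1(I_q+w^*\tanh\underline t)\le 1+\sigma_1(w)\tanh t_1$ and the elementary contraction estimate $\tanh t_1\le\min(t_1,1)=\widetilde t$, yields
\[
\sigma_i(A_w^*V_r)\;\le\;(1+\widetilde t\,\sigma_1(w))\,\sigma_i(\cosh\underline t\,V_r).
\]

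For the reverse inequality, note that $\sigma_1(w^*\tanh\underline t)\le\widetilde t\,\sigma_1(w)<1$, since $\sigma_1(w)<1$ for $w\in B_q$ and $\widetilde t\le 1$. Hence $I_q+w^*\tanh\underline t$ is invertible with $\sigma_1\bigl((I_q+w^*\tanh\underline t)^{-1}\bigr)\le(1-\widetilde t\,\sigma_1(w))^{-1}$, and applying the same singular-value inequality to $\cosh\underline t\,V_r=(I_q+w^*\tanh\underline t)^{-1}A_w^*V_r$ produces the matching lower bound
\[
\sigma_i(\cosh\underline t\,V_r)\;\le\;(1-\widetilde t\,\sigma_1(w))^{-1}\,\sigma_i(A_w^*V_r).
\]
Taking the product of the two-sided estimates over $i=1,\dots,r$ and squaring gives the bracket claimed in the lemma. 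The only non-routine point is spotting the multiplicative factorization above; everything else is a direct application of Lemma~\ref{lemma-eins1} and the bound $\tanh t_1\le\widetilde t$.
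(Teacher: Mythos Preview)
Your proof is correct and follows essentially the same route as the paper: both hinge on the multiplicative factorization $\cosh\underline t+\sinh\underline t\,w=\cosh\underline t\,(I+\tanh\underline t\,w)$ together with the bound $\sigma_1(\tanh\underline t\,w)\le\widetilde t\,\sigma_1(w)$ obtained from Lemma~\ref{lemma-eins1}. The only cosmetic difference is the extraction of the principal-minor bound: the paper converts the singular-value estimate into the Loewner inequality $(1-\widetilde t\,\sigma_1(w))^2 I\le(I+\widetilde w)(I+\widetilde w^*)\le(1+\widetilde t\,\sigma_1(w))^2 I$, conjugates by $b=u^*\cosh\underline t$, and invokes monotonicity of determinants, whereas you restrict to the first $r$ columns $V_r$ of $u$ and bound each $\sigma_i(A_w^*V_r)$ individually before taking the product.
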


\begin{proof}
We write the matrix $ \widetilde g_t(u,w)$ as
\begin{equation}\label{bwtilde1}
\widetilde g_t(u,w)=
 b(I+\widetilde w)(I+\widetilde w^*)b^*
\end{equation}
with
$$b:=u^* \cosh\underline t,   \quad
 \widetilde w:=( \cosh \underline t)^{-1}\sinh \underline t\cdot w\, = \tanh \underline t\cdot w$$
The inequalities of Lemma \ref{lemma-eins1} imply for  $i=1,\ldots, q$ that
\begin{align}
|1-\sigma_i(I+\widetilde w)|&= |\sigma_i(I)-\sigma_i(I+\widetilde w)|\le \sigma_1(\widetilde
w)=
\sigma_1(\tanh\underline t\cdot w)
\notag\\
&\le \sigma_1(\tanh \underline t\,)\cdot \sigma_1(w) .\notag
\end{align}
As $\,0\le \tanh x \le \min(x,1)$ for
$x\ge0$ and $x\mapsto\tanh x$ is increasing, we conclude that
$$ \sigma_1(\tanh\underline  t\,)\le \,\min(t_1,1) =\widetilde t$$
and thus
\begin{equation}
|1-\sigma_i(I+\widetilde w)|\le\, \widetilde t\cdot \sigma_1(w) \,\in [0,1].
\end{equation}
This implies for $i=1,\ldots,q\,$  that
\begin{equation}\label{sing-value}
(1- \widetilde t\,\sigma_1(w))^2 \leq \sigma_i(I+\widetilde w)^2\leq (1+ \widetilde
t\,\sigma_1(w))^2.
\end{equation}
This leads to the matrix inequality
$$(1- \widetilde t\sigma_1(w))^2I\le
(I+\widetilde w)(I+\widetilde w^*)\le(1+ \widetilde t\sigma_1(w))^2I,$$
and thus
$$(1-  \widetilde t\sigma_1(w))^2\,bb^*\le b(I+\widetilde w)(I+\widetilde w^*)b^*
\le(1+  \widetilde t\sigma_1(w))^2\,bb^*.$$
As for Hermitian  matrices $a,b$ with $0\leq a\le b$ the determinants
satisfy
 $\,0\le \Delta(a)\le \Delta(b)$, we finally obtain 
\begin{equation}\label{delta-r-ab}
\Delta_r( b(I+\tilde w)(I+\tilde w^*)b^*)\in
 \big[(1-  \tilde t\sigma_1(w))^{2r}\Delta_r(bb^*),
 (1+  \tilde t\sigma_1(w))^{2r}\Delta_r(bb^*) \big]
\end{equation}
as claimed.
\end{proof}

For the next step in the proof of Theorem 4.5 we use the integral representation \eqref{psi_int},
\begin{equation}\label{wdh-int-rep2}
\psi_{\lambda-i\rho^A}(t)=\int_{U_0(q,\mathbb F)} 
 \Delta_{i\lambda/2}(u^{-1}(\cosh\underline  t)^2u)du\,=\,
 \int_{B_q}\int_{U_0(q,\mathbb F)}  \Delta_{i\lambda/2}(\widetilde g_t(u,0))dm_p(w)du.
\end{equation}
 Using Lemma \ref{lemma-eins2}, we
estimate
the difference of the integrands in (\ref{wdh-int-rep1}) and
(\ref{wdh-int-rep2}).
We shall obtain the following result.

\begin{lemma}\label{lemma-eins3} Let $t\in\mathbb R^q$ and $\lambda\in\mathbb C^q$. Then for all
  $n\in\mathbb N$,
\begin{align}
\bigl| \phi_{\lambda-i\rho}^p(t)-\psi_{\lambda-i\rho^A}(t)\bigr|\,\leq&\,\,
 8q\|\lambda\|_1\,\widetilde t \cdot\Bigl(
\frac{1}{\kappa_{pd/2}}\int_{B_q}\sigma_1(w)^{2n}
 \Delta(I-w^*w)^{pd/2-\gamma -2n}\, dw\Bigr)^{1/2n}\notag\\
&\quad\cdot\Bigl(\phi^p_{\!\frac{2n}{2n-1}i\text{Im}
\lambda-i\rho}(t)^{\frac{2n-1}{2n}}
 +\psi_{\frac{2n}{2n-1}i\text{Im} \lambda-i\rho^A}(t)^{\frac{2n-1}{2n}}\Bigr)
\notag\end{align}
\end{lemma}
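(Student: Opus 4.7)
The strategy is to subtract the integral representations \eqref{wdh-int-rep1} and \eqref{wdh-int-rep2}, derive a pointwise estimate on the difference of the integrands using Lemma~\ref{lemma-eins2}, and then apply H\"older's inequality with conjugate exponents $2n$ and $2n/(2n-1)$. The key technical observation is that the logarithmic bound coming from Lemma~\ref{lemma-eins2} can be converted to a linear one in $\widetilde t\sigma_1(w)$ at the cost of a factor $\Delta(I-w^*w)^{-1}$; this is then absorbed into the measure at the H\"older step and accounts for the shifted exponent $pd/2-\gamma-2n$ appearing in the statement.

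Since $m_p$ is a probability measure and the integrand of $\psi_{\lambda-i\rho^A}(t)$ in \eqref{wdh-int-rep2} is independent of $w$, we have
\[\phi^p_{\lambda-i\rho}(t)-\psi_{\lambda-i\rho^A}(t)=\int_{B_q\times U_0(q,\mathbb F)}(A-B)\,dm_p(w)du,\]
with $A:=\Delta_{i\lambda/2}(\widetilde g_t(u,w))$ and $B:=\Delta_{i\lambda/2}(\widetilde g_t(u,0))$. Setting $\mu_r:=\lambda_r-\lambda_{r+1}$ (with $\lambda_{q+1}:=0$) and $\rho_r:=\Delta_r(\widetilde g_t(u,w))/\Delta_r(\widetilde g_t(u,0))$, the definition \eqref{power-function} yields $A/B=\exp(iz)$ with $z:=\tfrac12\sum_r\mu_r\log\rho_r$, and the identity $e^{iz}-1=\int_0^1 iz\,e^{itz}dt$ gives $|A-B|=|B||e^{iz}-1|\le|z|\max(|A|,|B|)\le|z|(|A|+|B|)$. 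Lemma~\ref{lemma-eins2} yields $|\log\rho_r|\le-2r\log(1-\widetilde t\sigma_1(w))$, and combined with $\sum_r r|\mu_r|\le 2q\|\lambda\|_1$ this gives $|z|\le 2q\|\lambda\|_1(-\log(1-\widetilde t\sigma_1(w)))$. The inequality $-\log(1-y)\le y/(1-y)$ for $y\in[0,1)$ together with
\[1-\widetilde t\sigma_1(w)\ge 1-\sigma_1(w)\ge\tfrac12\Delta(I-w^*w),\]
(the last step from $\Delta(I-w^*w)\le 1-\sigma_1(w)^2\le 2(1-\sigma_1(w))$) produces $-\log(1-\widetilde t\sigma_1(w))\le 2\widetilde t\sigma_1(w)/\Delta(I-w^*w)$, and hence the pointwise estimate
\[|A-B|\le \frac{4q\,\|\lambda\|_1\,\widetilde t\,\sigma_1(w)}{\Delta(I-w^*w)}(|A|+|B|).\]

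Integrating against $dm_p(w)du$ and applying H\"older's inequality with conjugate exponents $2n$ and $2n/(2n-1)$,
\[\int\frac{\sigma_1(w)(|A|+|B|)}{\Delta(I-w^*w)}dm_p du\le\Bigl(\int\frac{\sigma_1(w)^{2n}}{\Delta(I-w^*w)^{2n}}dm_p\Bigr)^{\!1/(2n)}\!\Bigl(\int(|A|+|B|)^{2n/(2n-1)}dm_p du\Bigr)^{\!(2n-1)/(2n)}\!.\]
By the definition \eqref{measure-mp} of $dm_p$, the first factor equals
$\bigl(\tfrac{1}{\kappa_{pd/2}}\int_{B_q}\sigma_1(w)^{2n}\Delta(I-w^*w)^{pd/2-\gamma-2n}dw\bigr)^{1/(2n)}$, exactly the quantity in the bound. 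For the second factor, note $|A|=\Delta_{-{\rm Im}\lambda/2}(\widetilde g_t(u,w))$, so $|A|^{2n/(2n-1)}=\Delta_{-\frac{n}{2n-1}{\rm Im}\lambda}(\widetilde g_t(u,w))$, and the integral representations \eqref{wdh-int-rep1} and \eqref{wdh-int-rep2} identify
\[\int|A|^{2n/(2n-1)}dm_pdu=\phi^p_{\frac{2n}{2n-1}i{\rm Im}\lambda-i\rho}(t),\qquad\int|B|^{2n/(2n-1)}du=\psi_{\frac{2n}{2n-1}i{\rm Im}\lambda-i\rho^A}(t).\]
Combining the elementary inequalities $(a+b)^{2n/(2n-1)}\le 2^{1/(2n-1)}(a^{2n/(2n-1)}+b^{2n/(2n-1)})$, $(x+y)^{(2n-1)/(2n)}\le x^{(2n-1)/(2n)}+y^{(2n-1)/(2n)}$, and $2^{1/(2n)}\le 2$ absorbs an additional factor of $2$ into the prefactor $4q$, yielding the claimed constant $8q$. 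The main obstacle is the passage from the logarithmic bound $|z|\lesssim-\log(1-\widetilde t\sigma_1(w))$, which diverges as $\widetilde t\sigma_1\to 1$, to a linear estimate in $\widetilde t\sigma_1$; paying the explicit factor $\Delta(I-w^*w)^{-1}$ overcomes this and dictates the precise form of the bound.
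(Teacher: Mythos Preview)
Your proof is correct and follows essentially the same route as the paper: subtract the two integral representations, bound the difference of the integrands via Lemma~\ref{lemma-eins2} together with $-\log(1-y)\le y/(1-y)$ and $(1-\sigma_1(w))^{-1}\le 2\,\Delta(I-w^*w)^{-1}$, then apply H\"older with exponents $2n$ and $2n/(2n-1)$ and identify the resulting integrals as $\phi^p$ and $\psi$ at the rescaled spectral parameter. The only cosmetic differences are that the paper bounds $|e^\alpha-e^\beta|\le 2|\alpha-\beta|(e^{\text{Re}\,\alpha}+e^{\text{Re}\,\beta})$ whereas you use the slightly sharper $|A-B|\le|z|\max(|A|,|B|)$, and the paper applies H\"older to the $|A|$- and $|B|$-terms separately (avoiding the extra $2^{1/(2n)}$ you pick up at the end); the constants balance out to the same $8q$.
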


\begin{proof}
We write the difference 
$$D:=\bigl|\Delta_{i\lambda/2}(\widetilde g_t(u,w))-\Delta_{i\lambda/2}(\widetilde
  g_t(u,0))\bigr|$$
 of the integrands in (\ref{wdh-int-rep1}), (\ref{wdh-int-rep2})
as
$\,D=|e^\alpha-e^\beta|\,$ with
$$\alpha:=\alpha(t,\lambda,u,w)= \frac{i}{2}\sum_{r=1}^q
(\lambda_r-\lambda_{r+1})
 \cdot\ln \Delta_r(\widetilde g_t(u,w))$$
and
$$\beta:=\beta(t,\lambda,u)= \frac{i}{2}\sum_{r=1}^q (\lambda_r-\lambda_{r+1})
 \cdot\ln \Delta_r(\widetilde g_t(u,0))$$
with the agreement $\lambda_{q+1}=0$.
We further write the functions $\alpha,\beta$ as $\alpha=\alpha_1+i\alpha_2$
 and $\beta=\beta_1+i\beta_2$ with 
$\alpha_1,\alpha_2,\beta_1,\beta_2\in \mathbb R$. By elementary calculus, we
obtain
\begin{align}\label{exp-absch-alpha}
|e^\alpha-e^\beta|&= \,|e^{\alpha_1+i\alpha_2}-e^{\beta_1+i\beta_2}|\le 
|e^{i\alpha_2}|\cdot|e^{\alpha_1}-e^{\beta_1}|+ e^{\beta_1}\cdot| e^{i\alpha_2}-
e^{i\beta_2}|
\notag\\ &\le |e^{\alpha_1}-e^{\beta_1}|+\sqrt 2 \cdot
e^{\beta_1}|\alpha_2-\beta_2|
\notag\\ &\le |\alpha_1-\beta_1|\cdot(e^{\alpha_1}+e^{\beta_1}) 
+\sqrt 2  (e^{\alpha_1}+e^{\beta_1})|\alpha_2-\beta_2|
\notag\\ &\le \,2\cdot|\alpha-\beta|\cdot(e^{\alpha_1}+e^{\beta_1}).
\end{align}
We have 
$$ |\alpha-\beta|\le \|\lambda\|_1 \cdot
\max_{r=1,\ldots,q}\bigl|\ln  \Delta_r(\widetilde g(t,u,w))-
\ln  \Delta_r(\widetilde g(t,u,0))\bigr|.$$
Hence we obtain from Lemma  \ref{lemma-eins2}, together with  the elementary inequality
\begin{equation}\label{element-ungl-ln}
|\ln(1+z)|\le \frac{|z|}{1-|z|} \quad\text{for}\quad |z|<1
\end{equation}
and with $\widetilde t\in[0,1]$ that
$$ |\alpha-\beta|\le \|\lambda\|_1 \cdot 2q
\cdot\frac{\widetilde t\,\sigma_1(w)}{1-\tilde t\, \sigma_1(w)}
\le \|\lambda\|_1 \cdot 2q\,
\widetilde t\cdot\frac{\sigma_1(w)}{1- \sigma_1(w)}.$$
Furthermore, as $1\ge\sigma_1(w)\ge\ldots\ge\sigma_q(w)\ge0$ for $w\in B_q$,
we have
\begin{equation}\label{est-sigma1-delta}
\frac{1}{1-\sigma_1(w)}\le \frac{2}{1-\sigma_1(w)^2}\le \,2 \prod_{r=1}^q
\frac{1}{1-\sigma_r(w)^2}
= \frac{2}{\Delta(I-w^*w)}.
\end{equation}
We thus conclude that
$$D\le 2 (e^{\alpha_1}+e^{\beta_1})     |\alpha-\beta|\le 8q
(e^{\alpha_1}+e^{\beta_1})
  \|\lambda\|_1\,\widetilde t
\cdot\frac{\sigma_1(w)}{\Delta(I-w^*w)}.$$
By this this estimate and H\"olders inequality we obtain
\begin{align}\label{main-absch-1}
&\bigl| \phi_{\lambda-i\rho}^p(t)-\psi_{\lambda-i\rho^A}(t)\bigr|\le
\\
&\quad \le \, 8q\|\lambda\|_1\,\widetilde t \cdot
\int_{B_q\times U_0(q,\mathbb F)} (e^{\alpha_1}+e^{\beta_1})\frac{\sigma_1(w)}
 {\Delta(I-w^*w)}dm_p(w) du
\notag\\
&\quad \le 8q\|\lambda\|_1\,\widetilde t \cdot\Bigl(
\int_{B_q}\frac{\sigma_1(w)^{2n}}
 {\Delta(I-w^*w)^{2n}} dm_p(w)\Bigr)^{1/2n} \times
\notag\\
&\quad\quad\quad\times \Bigl[\Bigl(\int_{B_q\times U_0(q,\mathbb
F)}e^{\frac{2n}{2n-1}\alpha_1}
dm_p(w) du\Bigr)^{\frac{2n-1}{2n}}\, + \,
\Bigl( \int_{B_q \times U_0(q,\mathbb F)}
e^{\frac{2n}{2n-1}\beta_1}
dm_p(w) du\Bigr)^{\frac{2n-1}{2n}}\Bigr].
\notag\end{align}
In view of (\ref{wdh-int-rep1}) and (\ref{wdh-int-rep2}),
 the $[\ldots]$-term in the last two lines is equal to
$$\phi_{\frac{2n}{2n-1}i\text{Im}\lambda-i\rho}^p(t)^{\frac{2n-1}{2n}} +
\psi_{\frac{2n}{2n-1}i\text{Im}\lambda-i\rho^A}(t)^{\frac{2n-1}{2n}},$$
and the lemma follows.
\end{proof}

Eq.~(\ref{absch1}) in Theorem \ref{absch-q-p-infty} is now a 
consequence of Lemma \ref{lemma-eins3} and the
following result:

\begin{lemma}\label{lemma-eins4}
For each $n\in\mathbb N$ there is a  constant $C=C(\mathbb F, q,n)>0$ such that
for all $p\ge 2q$,
$$R(p):= \int_{B_q}\frac{\sigma_1(w)^{2n}}
 {\Delta(I-w^*w)^{2n}} dm_p(w) \,\leq \,\frac{C}{p^n}\,.$$
\end{lemma}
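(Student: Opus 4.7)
The plan is to pass from the integral over $B_q$ to an iterated integral over the ball $B=\{y\in\mathbb F^q:\|y\|_2<1\}$ using the tower parametrization $P\colon B^q\to B_q$ of Remark \ref{degen-case-2q-1}, which reduces everything to classical Beta integrals that are explicit in $p$.

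First I would establish two clean identities under $w=P(y_1,\ldots,y_q)$. Matching the two expressions \eqref{measure-mp} and \eqref{image_meas} for $dm_p$ forces the Jacobian of $P$ to be proportional to $\prod_j(1-\|y_j\|_2^2)^{d(q-j)/2}$, so comparing exponents yields
$$\Delta(I-P(y)^*P(y))=\prod_{j=1}^q(1-\|y_j\|_2^2).$$
Secondly, since the $j$-th row of $P(y)$ is $y_j$ multiplied by operator-norm contractions of the form $(I-y_i^*y_i)^{1/2}$, one has
$$\sigma_1(P(y))^{2n}\le\operatorname{tr}(P(y)^*P(y))^n\le\Bigl(\sum_{j=1}^q\|y_j\|_2^2\Bigr)^n\le q^{n-1}\sum_{j=1}^q\|y_j\|_2^{2n}.$$

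Substituting both facts into $R(p)$ and setting $\alpha_i:=d(p-q-i+1)/2-1-2n$, I would use polar coordinates on each copy of $B\subset\mathbb F^q\cong\mathbb R^{dq}$ to obtain
$$R(p)\le q^{n-1}\sum_{j=1}^q\frac{B(dq/2+n,\alpha_j+1)}{B(dq/2,\alpha_j+2n+1)}\prod_{i\ne j}\frac{B(dq/2,\alpha_i+1)}{B(dq/2,\alpha_i+2n+1)},$$
where the denominators arise from the analogous factorisation of $\kappa_{pd/2}$. Writing each Beta function as a ratio of Gammas and invoking the elementary asymptotic $\Gamma(x+a)/\Gamma(x)=x^a(1+O(1/x))$ as $x\to\infty$, three of the four $\Gamma$-ratios in each $j$-th summand cancel to leading order, leaving net decay $\alpha_j^{-n}\asymp p^{-n}$, while the products over $i\ne j$ remain uniformly bounded. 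Summing over $j$ then yields $R(p)\le Cp^{-n}$ for large $p$.

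The main obstacle is the regime of integrability: the one-dimensional Beta integrals only converge when $\alpha_i>-1$ for all $i$, i.e.\ when $p>2q-1+4n/d$, which for large $n$ is strictly stronger than the stated $p\ge 2q$. One therefore has to verify separately that $R(p)$ stays finite and bounded on the transition window $2q\le p\le 2q-1+4n/d$ and absorb that uniform bound into the constant $C=C(\mathbb F,q,n)$; in any case, for the application in the proof of Theorem \ref{absch-q-p-infty} only the behaviour as $p\to\infty$ really matters, where the Beta-function asymptotics above cleanly give the claimed $p^{-n}$ decay.
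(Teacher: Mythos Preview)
Your approach is essentially identical to the paper's: the same tower parametrization $P:B^q\to B_q$, the same identity $\Delta(I-P(y)^*P(y))=\prod_j(1-\|y_j\|_2^2)$, the same bound $\sigma_1(w)^{2n}\le C\sum_j\|y_j\|_2^{2n}$ (the paper obtains it via $\sigma_1(w)^2\le\sum_j(ww^*)_{jj}\le\sum_j\|y_j\|_2^2$, which is your trace argument), and the same reduction to Beta integrals whose Gamma-function asymptotics give the $p^{-n}$ decay.

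Your remark about the integrability window is well taken and in fact sharper than the paper: the paper writes down the same Beta integrals and the same asymptotic $I_j(p)\sim\text{const}\cdot(dp/2)^{-n}$, concluding only that ``$R(p)$ is of order $O(p^{-n})$ for $p\to\infty$'', without addressing the range $2q\le p\le 2q-1+4n/d$ where the exponents $\alpha_i$ become $\le -1$. So the gap you flag is present in the paper as well; your observation that only the large-$p$ asymptotics are actually used in Theorem~\ref{absch-q-p-infty} is the honest way to state what has been proved.
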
 

\begin{proof} 
We  transform the integral in the lemma. The
diffeomorphism $P:B^q\to B_q$
introduced in Remark \ref{degen-case-2q-1}, where $B$ is the
ball $B:=\{y\in\mathbb F^q:\> \|y\|_2<1\}.$ We recall from \cite{R1}
that for $w=P(y_1, \ldots, y_q), $ one has
$\, \Delta(I-w^*w) = \prod_{j=1}^q (1 -\|y_j\|_2^2).\,$
With \eqref{image_meas} in mind, we obtain
\begin{equation}\label{main-absch-3}
 R(p)=\frac{1}{\kappa_{pd/2}}\cdot
\int_{B^q}  \sigma_1(P(y_1,\ldots,y_q))^{2n}\cdot
 \prod_{j=1}^{q}(1-\|y_j\|_2^2)^{ d(p-q-j+1)/2-1-2n}d(y_1,\ldots,
y_q).
\end{equation}
Moreover, the $j,j$-element $(ww^*)_{jj}$ of $ww^*$ satisfies
$$(ww^*)_{jj}= y_j(I-y_1^*y_1)^{1/2}\ldots
(I-y_{j-1}^*y_{j-1})^{1/2}(I-y_{j-1}^*y_{j-1})^{1/2}
\ldots(I-y_1^*y_1)^{1/2}y_j^*.$$
As the hermitian matrix $I-y^*y\,$ has eigenvalues in $[0,1]$,
 it follows readily that
$0\le (ww^*)_{jj}\le \|y_j\|_2^2$ and hence
\[\sigma_1(w)^2 \le \sum_{j=1}^q (ww^*)_{jj}\le \sum_{j=1}^q \|y_j\|_2^2 .\]
Therefore,
\[\sigma_1(w)^{2n} \le C\cdot \sum_{j=1}^q \|y_j\|_2^{2n}\]
with some constant $C>0$.
 This leads to the estimate
\begin{equation}\label{main-absch-4}
R(p)\le  \frac{C}{\kappa_{pd/2}} 
\sum_{j=1}^q 
\int_{B^q}   \|y_j\|_2^{2n}\cdot
 \prod_{r=1}^{q}(1-\|y_r\|_2^2)^{d(p-q-r+1)/2 \,-1-2n}d(y_1,\ldots, y_q).
\end{equation}
Using polar coordinates, we obtain for $y= y_r$ and arbitrary $\alpha>0$ 
that
\[\int_B (1-\|y\|_2^2)^{\alpha-1}\> dy = \,\omega_{dq} \int_0^1
x^{dq-1}(1-x^2)^{\alpha-1}\> dx
= \omega_{dq}\cdot \frac{\Gamma(\alpha)\,\Gamma\bigl(\frac{dq}{2}\bigr)}{2\cdot
\Gamma\bigl(\alpha + \frac{dq}{2}\bigr)}\]
and
\[\int_B\|y\|_2^{2n} (1-\|y\|_2^2)^{\alpha-1}\> dy =\,
\omega_{dq} \int_0^1 x^{dq-1+2n}(1-x^2)^{\alpha-1}\> dx
=
\omega_{dq}\cdot\frac{\Gamma(\alpha)\,\Gamma\bigl(n+\frac{dq}{2}\bigr)}{
2\cdot\Gamma\bigl(\alpha+n +\frac{dq}{2}\bigr)}\]
with the surface measure $\omega_{dq}:=vol(S^{dq-1})$ of the unit sphere in
$\mathbb
R^{dq}$ as 
 normalization constant. 
These formulas yield that
\begin{align}\label{kappa-explicit}
\kappa_{pd/2}=\,&\int_{B^q}
 \prod_{r=1}^{q}(1-\|y_r\|_2^2)^{ d(p-q-r+1)/2-1 }d(y_1,\ldots, y_q)\notag\\
=\,&\Bigl(\frac{\omega_{dq}}{2}\cdot
\Gamma\bigl(\frac{dq}{2}\bigr)\Bigr)^{q}\cdot
\prod_{r=1}^q\frac{\Gamma\bigl(\frac{d}{2}(p-q-r+1)\bigr)}{\Gamma\bigl(\frac{d}{
 2} (p-r+1) \bigr)}
\end{align}
and
\begin{align}
&I_j(p):= \frac{1}{\kappa_{pd/2}}\cdot\int_{B^q}  \|y_j\|_2^{2n}\cdot
 \prod_{r=1}^{q}(1-\|y_r\|_2^2)^{ d(p-q-r+1)/2 -1 -2n}\,d(y_1,\ldots, y_q)
=\notag\\
&= \frac{\Gamma\bigl(n +
\frac{dq}{2}\bigr)}{\Gamma\bigl(\frac{dq}{2}\bigr)}\cdot\frac{\prod_{r=1}^q
\Gamma\bigl(\frac{d}{2}(p-q-r+1)-2n)}{\Gamma\bigl(\frac{d}{2}(p-j+1)
-n\bigr)\cdot\prod_{r\not=j}\Gamma\bigl(\frac{d}{2}(p-r+1)-2n\bigr)} \cdot
\prod_{r=1}^q
\frac{\Gamma\bigl(\frac{d}{2}(p-r+1)\bigr)}{\Gamma\bigl(\frac{d}{2}
(p-q-r+1)\bigr) } \, .
\notag
\end{align}
>From the asymptotics of the gamma function we
obtain for $p\to \infty$ the asymptotic equality
\[ I_j(p) \,\sim \, \frac{\Gamma\bigl(n +
\frac{dq}{2}\bigr)}{\Gamma\bigl(\frac{dq}{2}\bigr)}\cdot
\bigl(\frac{dp}{2})^{-n}\quad (p\to \infty). \]
This implies that $R(p)$ is of order
$O(p^{-n})$ for $p\to\infty$. 
\end{proof}

The proof of Theorem \ref{absch-q-p-infty} is now complete. 

\section{Convergence to Bessel functions of type B}

In this section we consider the Heckman-Opdam function $\phi_{\lambda}^p$ for fixed $p\in \mathbb R$ with $p\ge 2q-1$
in a scaling limit. More precisely, we use the integral repesentation of Theorem \ref{int-rep} in order to derive 
convergence of the rescaled
 functions $\phi_{n\lambda-i\rho}^p(t/n)$ for $n\to \infty$  to
Dunkl-type Bessel functions associated with root system $B_q$. While such asymptotics are 
well-known in a general context from the asymptotics of the hypergeometric
system, we here obtain a precise estimate
 for the rate of convergence.

To explain the result,
 let us first recall some facts on Bessel functions from \cite{FK},\cite{Ka} and
\cite{R1}.

\begin{Bessel}\label{bessel-def}
Let ${\bf m}=(m_1,\ldots,m_q)$  be a partition of length $q$ with integers
$m_1\ge
m_2\ge\ldots\ge m_q\ge0$ and let $|{\bf m}|:=m_1+\ldots +m_q$.
For $x\in\mathbb C$ and a parameter $\alpha>0$, 
the generalized Pochhammer symbol is given by
\begin{equation}
 (x)_{\bf m}^\alpha \,=\,\prod_{j=1}^q
\bigl(x-\frac{1}{\alpha}(j-1)\bigr)_{m_j}.
\end{equation}
\noindent
For $\mathbb F= \mathbb R, \mathbb C, \mathbb H$  with $d=\text{dim}_\mathbb R \mathbb F $ 
and  partitions ${\bf m}$,
 the  spherical polynomials $\Phi_{\bf m}$ are defined by 
$$ \Phi_{\bf m} (x) = \int_{U_q} \Delta_{\bf m}(uxu^{-1})du
\quad\text{for }\,  x\in M_{q}(\mathbb F)$$
where  $\Delta_{\bf m}$ is the power function of Eq.~(\ref{power-function}).
We also consider the renormalized polynomials
$Z_{\bf m} = c_{\bf m}\cdot \Phi_{\bf m}$
with certain normalization constants $c_{\bf m}>0$ which are 
 characterized by the formula
\begin{equation}\label{traceid}
(\text{tr} \,x)^k \,=\, \sum_{|{\bf m}|=k} Z_{\bf m}(x)
\quad\quad{\rm for}\>\> k\in \mathbb N_0, \, x\in M_{q}(\mathbb F).
\end{equation}

 By construction, the $ \Phi_{\bf m}$ and  $Z_{\bf m}$
 are invariant under conjugation by $U(q,\mathbb F)$ and thus depend only on
 the eigenvalues of their argument.
More precisely, for a Hermitian matrix $x\in M_{q}(\mathbb F)$ with
eigenvalues 
$\xi = (\xi_1, \ldots, \xi_q)\in \mathbb R^q$, we have $Z_{\bf m}(x) = 
C_{\bf m}^\alpha(\xi)$
where the $C_{\bf m}^\alpha$ are the Jack polynomials of index $\alpha:=2/d$; see
Section XI of  \cite {FK} and references cited there.
The Jack polynomials are homogeneous
 of degree $|{\bf m}|$ and symmetric in their arguments.

Following Kaneko \cite{Ka} (see also Section 2.2 of \cite{R1}) we
define Bessel functions in two arguments
\begin{equation}\label{def-bessel}
J_\mu(\xi,\eta):= \sum_{\bf m} \frac{(-1)^{|{\bf m}|}}{(\mu)_{\bf m}^\alpha
 |{\bf m}|!} \cdot
\frac{C_{\bf m}^\alpha(\xi)C_{\bf m}^\alpha(\eta)}{C_{\bf m}^\alpha(1,\ldots,1)}
\end{equation}
for $\mu\in\mathbb C$ with $(\mu)_{\bf m}^\alpha\ne0$ for all partitions 
${\bf  m}$ and
with fixed parameter $\alpha:=2/d$.
A comparison of (\ref{def-bessel}) with the explicit form of the Dunkl-type Bessel
 functions $J_k^B$ associated with root system $B_q$ in \cite{BF} 
shows that 
the Bessel function $J_\mu$ can be expressed in terms of $J_k^B$ as
$$J_\mu\bigl(\frac{\xi^2}{2},\frac{\eta^2}{2}\bigr)=J_k^B(\xi, i\eta),$$
with the  multiplicity parameter $k:=k(\mu,d):=(\mu-(q-1)d/2 -1/2, d/2)$.
 For the details see Section 4.3 of \cite{R1} and \cite{O} for the general
context.
\end{Bessel}

For certain indices $\mu$, the Bessel functions $J_\mu$ appear as the spherical
functions of the Euclidean-type symmetric spaces $G_0/K$ where $K=U(p,\mathbb F)\times U(q,\mathbb F)$
and $G_0= K\ltimes M_{p,q}(\mathbb F)$ is the Cartan  motion group associated with the
Grassmannian  $\mathcal G_{p,q}(\mathbb F)$. 
The double coset space $G_0//K$ is naturally identified with the Weyl chamber 
$C_q$, with $t\in C_q$ corresponding to the double coset of $(I_p, I_q, \underline t\,) \in G_0$. So we may consider biinvariant functions on 
 $G_0$ as functions on $C_q$. It is  well known  (see Section 4 of \cite{R1})
that the spherical functions of $(G_0,K)$ are given in terms of the
 Bessel function $J_\mu$ as follows:

\begin{proposition}\label{class-spher-2} The spherical functions of $(G_0,K)$
are  given by
the Dunkl-type Bessel functions
 \[\widetilde\phi_\lambda^{\,p}(t):=J_k^B(t, i\lambda)=
J_\mu\bigl(\frac{\lambda^2}{2}, \frac{t^2}{2}\bigr), \quad \lambda\in \mathbb C^q \]
with  $\mu:=pd/2$ and $k$ as in Section 
\ref{bessel-def}. 
Moreover, $\widetilde\phi_\lambda^{\,p}$ is bounded precisely for
 $\lambda\in \mathbb R^q$. 
\end{proposition}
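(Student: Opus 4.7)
The plan is to deduce both assertions from the general theory of Gelfand pairs attached to Cartan motion groups, following Section 4 of \cite{R1}. Since $V:=M_{p,q}(\mathbb F)$ is abelian and $K$ acts on it by $(k_1,k_2)\cdot X=k_1Xk_2^*$, the pair $(G_0,K)$ is a Gelfand pair. Its bounded spherical functions arise as $K$-averages of the unitary characters $X\mapsto e^{\,i\,\mathrm{Re}\,\mathrm{tr}(\eta^*X)}$ of $V$. By the singular value decomposition, every $K$-orbit on $V$ has a unique representative $\sigma_0\underline t$ with $t\in C_q$, which matches the parametrization $G_0//K\simeq C_q$ used in the statement; the same applies on the dual side, so $\eta$ may be taken of the form $\sigma_0\underline\lambda$ with $\lambda\in C_q$, and the extension to $\lambda\in\mathbb C^q$ is by Weyl-covariance and analytic continuation.

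The resulting $K$-average
\[
\widetilde\phi_\lambda^{\,p}(t)\,=\,\int_{U(p,\mathbb F)}\!\int_{U(q,\mathbb F)} e^{\,i\,\mathrm{Re}\,\mathrm{tr}(\underline\lambda\,\sigma_0^*k_1\sigma_0\,\underline t\,k_2^*)}\,dk_1\,dk_2
\]
must then be identified with the Bessel series \eqref{def-bessel}. Expanding $e^z=\sum_n z^n/n!$ and using the trace identity \eqref{traceid} to regroup homogeneous components by partitions, the integral over $U(q,\mathbb F)$ collapses, by the very definition of the spherical polynomials $\Phi_{\bf m}$ in \ref{bessel-def}, to the Jack polynomials $C_{\bf m}^\alpha$ with $\alpha=2/d$ evaluated at the singular values of the relevant matrix. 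The remaining integrals over $U(p,\mathbb F)$ of $\Phi_{\bf m}(\sigma_0^*k_1\sigma_0\cdot x)$, computed via the moment structure of Haar measure on the unitary group (Section XVI of \cite{FK}), produce exactly the generalized Pochhammer factors $1/(\mu)_{\bf m}^\alpha$ with $\mu=pd/2$; the sign $(-1)^{|{\bf m}|}$ and the rescaling of the arguments to $\lambda^2/2$ and $t^2/2$ arise from the factors of $i$ and the halved real part in the exponential. Reassembling the series yields the defining expansion of $J_\mu(\lambda^2/2, t^2/2)$, and the alternative form $J_k^B(t,i\lambda)$ is then the conversion formula already recalled in \ref{bessel-def}.

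For the boundedness claim, sufficiency is immediate since for $\lambda\in\mathbb R^q$ the integrand above has modulus one, whence $\bigl|\widetilde\phi_\lambda^{\,p}\bigr|\le 1$. Necessity follows from the general range characterization of bounded Dunkl-type Bessel functions associated with nonnegative multiplicities: $J_k^B(\cdot,\xi)$ is bounded on $\mathbb R^q$ if and only if $\xi\in i\mathbb R^q$, which forces $\lambda\in\mathbb R^q$; see \cite{R1} for the precise statement in the present setting. The main technical obstacle throughout is the careful bookkeeping of normalization constants in the series identification, so as to match the Haar-integral coefficients precisely with the $(\mu)_{\bf m}^\alpha$ appearing in \eqref{def-bessel}. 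This computation is carried out in detail in Section 4 of \cite{R1}, so in practice the proof reduces to invoking that reference together with the conversion to the Dunkl-Bessel form recalled in \ref{bessel-def}.
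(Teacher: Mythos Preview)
Your proposal is correct and aligns with the paper's own treatment: the paper does not give an independent proof of this proposition but simply records it as well known, citing Section~4 of \cite{R1}. You do the same, with the added service of sketching the argument behind that reference (the $K$-average of characters, expansion via \eqref{traceid}, and identification of the Pochhammer factors), so in substance your approach and the paper's coincide.
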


The spherical functions of  $(G_0,K)$ with  dimension 
parameters
$p\ge 2q$  admit a Harish-Chandra integral
 representation which can be extended by Carlson's theorem to all
 real parameters $p> 2q-1$ and thus to the corresponding indices $\mu$. This leads to the following

\begin{proposition}\label{Bessel-inte}
For all real parameters $p>2q-1$
 and all  $\,t\in C_q$ and $\lambda\in
\mathbb C^q$,
\begin{equation}\label{int-rep-Bessel}
\widetilde\phi_\lambda^{\,p}(t)=\int_{B_q}\int_{U_0(q,\mathbb F)} e^{-i\,\text{Re}\,\text{tr}(w
  \underline t u \underline\lambda\,)}dm_p(w)du
\end{equation}
with the probability measure $m_p\in M^1(B_q)$ of Eq.~(\ref{measure-mp}).
Moreover, for  $p=2q-1$ and with the notations of Remark
 \ref{degen-case-2q-1},
\begin{equation}\label{int-rep-Bessel-sing}
\widetilde\phi_\lambda^{\,p}(t)=
\frac{1}{\kappa_{(2q-1)d/2}}\int_{B^{q-1}\times S}\int_{U_0(q,\mathbb F)}
e^{-i\,\text{Re}\,\text{tr}( P(y)
  \underline t u\underline \lambda\,)}
\cdot\prod_{j=1}^{q-1}(1-\|y_j\|_2^2)^{q-1-j}dy_1\ldots
dy_{q-1}\> d\sigma(y_q)\> du.
\end{equation}
 \end{proposition}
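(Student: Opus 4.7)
The plan is to first establish \eqref{int-rep-Bessel} for integer $p\ge 2q$ directly from the Harish-Chandra integral for the spherical functions of the Cartan motion group $(G_0,K)$, then to extend the identity to all real $p>2q-1$ by analytic continuation via Carleson's Theorem \ref{continuation}, in complete parallel with the proof of Theorem \ref{int-rep}. The degenerate formula \eqref{int-rep-Bessel-sing} will then follow from the limit $p\downarrow 2q-1$ combined with the change of variables of Remark \ref{degen-case-2q-1}.

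For integer $p\ge 2q$, the standard Harish-Chandra integral for the spherical functions of the Cartan motion group $G_0=K\ltimes\mathfrak p$ with $\mathfrak p\cong M_{q,p}(\mathbb F)$ and $K=U(p,\mathbb F)\times U(q,\mathbb F)$ acting by $(v,u)\cdot X=uXv^{-1}$ yields
\[\widetilde\phi_\lambda^{\,p}(t)\,=\,\int_{U(q,\mathbb F)\times U(p,\mathbb F)} e^{-i\,\text{Re}\,\text{tr}(u\,\underline t\,\sigma_0^*v^{-1}\sigma_0\,\underline\lambda^*)}\,du\,dv,\]
after identifying $t\in C_q$ with the Cartan element $\underline t\sigma_0^*\in\mathfrak a\subset\mathfrak p$ and $\lambda$ with $\underline\lambda\sigma_0^*$. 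The key observation is that the block $\sigma_0^*v^{-1}\sigma_0\in M_q(\mathbb F)$ is the upper-left $q\times q$-block of $v^{-1}$, which lies in $\overline{B_q}$; consequently the integrand depends on $v\in U(p,\mathbb F)$ only through this block. By Lemma 2.1 of \cite{R2}, for integer $p\ge 2q$ the pushforward of Haar measure on $U(p,\mathbb F)$ under this block projection is precisely $dm_p$. A further reduction of the remaining $U(q,\mathbb F)$-integration to $U_0(q,\mathbb F)$ proceeds as in the second part of the proof of Corollary \ref{int-rep-spherical}, exploiting the joint invariance of integrand and measure under simultaneous transformations $u\mapsto du$, $w\mapsto wd$ with $d$ in a suitable discrete subgroup of $U(q,\mathbb F)$, together with measure-preserving changes of variable $w\mapsto w^*$ and $u\mapsto u^{-1}$ to bring the exponent into the required form $-i\,\text{Re}\,\text{tr}(w\underline t u\underline\lambda)$. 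This proves \eqref{int-rep-Bessel} for integer $p\ge 2q$.

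Both sides of \eqref{int-rep-Bessel} are analytic in $p$ on the half-plane $\{\text{Re}\,p>2q-1\}$: on the right this is clear from the explicit form \eqref{measure-mp} of $dm_p$ and from \eqref{def-kappa}, while on the left it follows from Proposition \ref{class-spher-2} together with the series \eqref{def-bessel}, which depends holomorphically on $\mu=pd/2$. Moreover, the exponential $e^{-i\,\text{Re}\,\text{tr}(w\underline t u\underline\lambda)}$ is uniformly bounded by $e^{q\cdot\max_j|t_j|\cdot\max_j|\lambda_j|}$ on $B_q\times U_0(q,\mathbb F)$, so the right-hand side grows at most polynomially in $p$ in any vertical strip (through the Gamma-function ratios in $1/\kappa_{pd/2}$, cf.~\eqref{kappa-explicit}); the left-hand side enjoys a comparable bound from the standard asymptotics of $J_\mu$ in $\mu$. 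Carleson's Theorem \ref{continuation} therefore extends \eqref{int-rep-Bessel} to all real $p>2q-1$.

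For \eqref{int-rep-Bessel-sing}, rewrite the $B_q$-integral in \eqref{int-rep-Bessel} using the diffeomorphism $P:B^q\to B_q$ of \eqref{trafo-P1} and the pushforward \eqref{image_meas}, obtaining an integral over $B^q$ whose density is proportional to $\prod_{j=1}^{q}(1-\|y_j\|_2^2)^{d(p-q-j+1)/2-1}$. As $p\downarrow 2q-1$ the exponents for $j<q$ tend to $d(q-j)/2-1$, while the $j=q$ exponent tends to $-1$; after the corresponding normalization the $y_q$-marginal concentrates on the sphere $S$ and converges weakly to the uniform distribution $d\sigma$, by exactly the same argument used to pass from \eqref{intrep-main-mod} to \eqref{intrep-main-degen} in Remark \ref{degen-case-2q-1}. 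Continuity of both sides of \eqref{int-rep-Bessel} in $p$ then yields \eqref{int-rep-Bessel-sing}. The main technical obstacle will be verifying the growth hypothesis in Carleson's theorem — in particular, tracking the Gamma-function ratios in $1/\kappa_{pd/2}$ from \eqref{kappa-explicit} to ensure polynomial, not exponential, dependence on $p$; this is simpler than the corresponding step in the proof of Theorem \ref{int-rep}, because the oscillatory integrand here is bounded uniformly in $p$ and the only $p$-dependence enters through the normalization of $dm_p$.
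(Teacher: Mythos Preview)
Your approach is correct but differs from the paper's. The paper's proof is essentially a two-line citation: for all real $p>2q-1$ and $\lambda\in C_q$, formula \eqref{int-rep-Bessel} is obtained directly by combining the integral representations (3.12) and (4.4) of \cite{R1}, which are already established there for the full continuous range of $p$; the extension to general $\lambda\in\mathbb C^q$ is then by analytic continuation in $\lambda$ alone. The degenerate case $p=2q-1$ is likewise referred back to \cite{R1}. In particular, no Carleson argument in $p$ is needed, because the hard work of interpolating in $p$ was already done in \cite{R1}.

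Your route instead re-derives the integer case $p\ge 2q$ from the Harish-Chandra integral for the motion group and then applies Carleson's theorem in $p$, mirroring the strategy of Theorem \ref{int-rep}. This is a legitimate, more self-contained argument, and your handling of the degenerate limit $p\downarrow 2q-1$ via the coordinates of Remark \ref{degen-case-2q-1} matches the paper's. The trade-off is clear: the paper's proof is much shorter because it leverages \cite{R1}, while yours avoids that dependence at the cost of redoing the analytic continuation in $p$ (where, as you correctly note, the growth estimate is easier than in Theorem \ref{int-rep} since the integrand is bounded independently of $p$).
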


\begin{proof} 
For $p> 2q-1$ and $\lambda\in C_q$, the first formula
is immediate by a combination of the integral representations 
(3.12) and (4.4) in \cite{R1} (in the latter, integration over $U(q, \mathbb F)$ my be replaced 
by integration over $U_0(q, \mathbb F)$.)
 The general case $\lambda\in \mathbb C^q$ then
follows by analytic continuation. 

 The singular limit case $p= 2q-1$ can be
derived in the same way as in  \cite{R1}; see also Remark
\ref{degen-case-2q-1}.
We omit the details.
\end{proof}

A comparison of these integral representations for 
the Bessel functions $\widetilde\phi_\lambda^{\,p}$ with the integral
representation 
for the Heckman-Opdam functions $\phi_\lambda^p$ of Section 2 leads to the
following theorem, which is the main result of this section.

\begin{theorem}\label{vgl-Bessel}
For each compact subset $K\subset \mathbb R^q$ 
 there exists a constant $C=C(K)>0$ such that for all
 $p\in\mathbb R$ with $p\geq 2q-1$,
all $\lambda\in \mathbb R^q,  \, t\in K$, and  all $n\in \mathbb N$, 
$$|\phi_{n\lambda-i\rho}^p(t/n)-\widetilde\phi_\lambda^{\,p}(t)|\leq\, C\cdot\frac{ \|\lambda\|_1}{n}\,.$$
Here again, $\|\lambda\|_1=|\lambda_1|+\ldots+|\lambda_q|$.
\end{theorem}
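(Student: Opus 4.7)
The plan is to derive the claim from the integral representations of Theorem \ref{int-rep} (and Remark \ref{degen-case-2q-1} for $p=2q-1$) together with Proposition \ref{Bessel-inte}: both $\phi^p_{n\lambda-i\rho}(t/n)$ and $\widetilde\phi^p_\lambda(t)$ are then expressed as integrals against the same probability measure $dm_p(w)\,du$ on $B_q\times U_0(q,\mathbb F)$. For brevity I treat the generic case $p>2q-1$; the case $p=2q-1$ follows either by continuity in $p$ (the constant obtained is uniform) or by running the identical argument through the degenerate integral representations. For bounded $n$ the estimate is trivial from $\|\phi^p_{n\lambda-i\rho}\|_\infty\le 1$ (Lemma \ref{bounded-est-BC}(2), since $-\rho\in W.\rho$) and $\|\widetilde\phi^p_\lambda\|_\infty\le 1$ (from Proposition \ref{Bessel-inte}), so I concentrate on the asymptotics for large $n$.

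I first Taylor-expand the argument of the power function. From $\cosh\underline{t/n}=I+O(1/n^2)$ and $\sinh\underline{t/n}=\underline{t/n}+O(1/n^3)$ I obtain, uniformly in $(u,w)\in U_0(q,\mathbb F)\times B_q$ and $t\in K$,
\[\widetilde g_{t/n}(u,w)=I+\tfrac{1}{n}\,A(u,w,t)+O(1/n^2),\qquad A(u,w,t):=u^{-1}(\underline t\,w+w^*\underline t)u.\]
The expansion $\ln\Delta_r(I+X)=\sum_{i=1}^{r}\text{Re}(X_{ii})+O(\|X\|^2)$ is standard for $\mathbb F=\mathbb R,\mathbb C$; for $\mathbb F=\mathbb H$ it follows from the Dieudonn\'e determinant, whose linearization at $I$ is precisely $\text{Re}\,\text{tr}$. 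Summing with weights $(\lambda_r-\lambda_{r+1})/2$ and telescoping via the identity $\sum_{r=1}^{q}(\lambda_r-\lambda_{r+1})\sum_{i=1}^{r}\text{Re}(A_{ii})=\text{Re}\,\text{tr}(\underline\lambda A)$, I obtain the uniform estimate
\[\Delta_{in\lambda/2}(\widetilde g_{t/n}(u,w))=\exp\!\bigl(\tfrac{i}{2}\text{Re}\,\text{tr}(\underline\lambda A)\bigr)+O(\|\lambda\|_1/n).\]
Cyclicity of $\text{Re}\,\text{tr}$ and $\text{Re}\,\text{tr}(B^*)=\text{Re}\,\text{tr}(B)$ then give $\tfrac{1}{2}\text{Re}\,\text{tr}(\underline\lambda A)=\text{Re}\,\text{tr}(\underline\lambda\,u^{-1}\underline t\,w\,u)$.

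Integration against $dm_p\,du$ thus reduces the claim to the identity
\[\int_{B_q\times U_0(q,\mathbb F)}e^{\,i\,\text{Re}\,\text{tr}(\underline\lambda u^{-1}\underline t\,w\,u)}\,dm_p(w)\,du\;=\;\widetilde\phi^p_\lambda(t).\]
To establish this I first change variables $w\mapsto uwu^{-1}$, which preserves $dm_p$ because $\Delta(I-w^*w)$ and Lebesgue measure on $M_q(\mathbb F)$ are both invariant under unitary conjugation; together with cyclicity of $\text{Re}\,\text{tr}$ the left-hand side becomes $\int e^{\,i\,\text{Re}\,\text{tr}(w\,\underline\lambda\,u^{-1}\underline t\,u)}dm_p\,du$. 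The symmetry $w\mapsto -w$ preserves $dm_p$ and converts Proposition \ref{Bessel-inte} into $\widetilde\phi^p_\lambda(t)=\int e^{\,i\,\text{Re}\,\text{tr}(w\,\underline t\,u\,\underline\lambda)}dm_p\,du$. For fixed $u$ both inner integrals have the form $\int_{B_q}e^{\,i\,\text{Re}\,\text{tr}(wB)}dm_p(w)$, which by the biinvariance of $dm_p$ under $w\mapsto u_1wu_2$ depends on $B\in M_q(\mathbb F)$ only through its singular values. A direct computation shows that the squared singular values of $\underline\lambda\,u^{-1}\underline t\,u$ and of $\underline t\,u\,\underline\lambda$ both coincide with the eigenvalues of $\underline\lambda^{2}\,u^{-1}\underline t^{2}u$, by the standard fact that $AB$ and $BA$ have the same eigenvalues. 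Hence the inner integrals agree for each $u$, and the identity above follows after integration over $U_0(q,\mathbb F)$.

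The main obstacle is that the BC leading-order integrand and the Bessel integrand do not match pointwise in $(u,w)$ — the unitary variable $u$ sits in genuinely different positions relative to $\underline\lambda$ — so equality can be forced only after the substitution $w\mapsto uwu^{-1}$ and the singular-value identification above. Additional care in the quaternionic case stems from the failure of cyclicity of the ordinary trace over $\mathbb H$; working consistently with $\text{Re}\,\text{tr}$ and the Dieudonn\'e determinant throughout handles this uniformly across $\mathbb F=\mathbb R,\mathbb C,\mathbb H$.
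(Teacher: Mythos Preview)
Your approach matches the paper's: compare the integral representations of Theorem~\ref{int-rep} and Proposition~\ref{Bessel-inte}, Taylor-expand $\widetilde g_{t/n}=I+A/n+O(1/n^2)$ with $A=u^{-1}(\underline t\,w+w^*\underline t)u$, and linearize $\ln\Delta_r$. One minor correction: the bounded-$n$ case is \emph{not} covered by $\|\phi\|_\infty,\|\widetilde\phi\|_\infty\le 1$, since that yields only $|\phi-\widetilde\phi|\le 2$ and misses the factor $\|\lambda\|_1$ when $\lambda$ is small. This case split is in fact unnecessary: $\widetilde g_{t/n}(u,w)$ stays positive definite and in a fixed compact set for all $n\ge 1$ (uniformly in $u,w$ and $t\in K$), so the second-order Taylor estimate $\ln\Delta_r(\widetilde g_{t/n})=\mathrm{tr}_r(A)/n+O(1/n^2)$ is uniform in $n\ge 1$, and your main argument already handles every $n$.

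Your matching of the leading-order integral with $\widetilde\phi^{\,p}_\lambda$ via the conjugation $w\mapsto uwu^{-1}$ and the singular-value identification is correct but more circuitous than the paper's route. The paper instead substitutes $w\mapsto -u^*w^*$ directly in the Bessel integral of Proposition~\ref{Bessel-inte} (this preserves $dm_p$, which is invariant under $w\mapsto w^*$ as well as under one-sided unitary multiplication and sign change); after rewriting the resulting exponent via partial traces $\mathrm{tr}_r$, the Bessel integrand becomes $\prod_r\exp\bigl(\tfrac{i}{2}(\lambda_r-\lambda_{r+1})\,\mathrm{tr}_r(A)\bigr)$, which matches the linearized BC integrand \emph{pointwise} in $(u,w)$. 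This avoids the singular-value step entirely and the difference of the two integrals is then bounded termwise via $\bigl|\prod a_r-\prod b_r\bigr|\le\sum|a_r-b_r|$ and $|e^{ix}-e^{iy}|\le\sqrt 2\,|x-y|$.
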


\begin{proof} We only give a proof for the non-degenerate case $p> 2q-1$.
The case $p= 2q-1$ follows in the same way from
(\ref{int-rep-Bessel-sing}) and Remark  \ref{degen-case-2q-1}.

We substitute  $w\mapsto -u^*w^*$ in the integral
  (\ref{int-rep-Bessel})
and obtain
$$\widetilde\phi_\lambda^{\,p}(t)=\,\int_{B_q}\int_{U_0(q, \mathbb F)} e^{i\cdot Re\, tr(u^*
  w^*\underline t\, u\underline \lambda\,)}dm_p(w)du\,.$$
Moreover, denoting the trace of the upper left $r\times r$-block of a
$q\times q$-matrix by $tr_r\,,$ we have
\begin{align*}
Re\, tr(u^*  w^*\underline t u\underline\lambda\,)
&= \,\frac{1}{2}\cdot\sum_{r=1}^q (u^*((\underline tw)^*+\underline tw)u)_{rr}\cdot \lambda_r\\
&=\,\sum_{r=1}^q \bigl[ \, tr_r(u^*((\underline tw)^* + \underline tw)u) -
 tr_{r-1}(u^*((\underline tw)^* + \underline tw)u)\bigr] \cdot \lambda_r/2\\
&=\,\sum_{r=1}^q  tr_r(u^*((\underline tw)^*+ \underline tw)u) \cdot(\lambda_r-\lambda_{r+1})/2
\end{align*}
with $\lambda_{q+1}:=0$. Thus,
$$\widetilde\phi_\lambda^{\,p}(t)=\,\int_{U_0(q,\mathbb F)\times B_q}\, \prod_{r=1}^q exp\left( i\cdot
 tr_r(u^*((tw)^*+tw)u) \cdot( \lambda_r-\lambda_{r+1})/2\right) dm_p(w)du.
$$
Further, according to  Theorem \ref{int-rep}, 
$$\phi_{n\lambda-i\rho}^p(t/n)=\int_{U_0(q,\mathbb F)\times B_q}\, \prod_{r=1}^q
 \Delta_r(g_{t/n}(u,w))^{in( \lambda_r-\lambda_{r+1})/2}\,dm_p(w)du$$
with the positive definite matrix
$$g_{t/n}(u,w) = u^*(\cosh(t/n)+ 
\sinh(t/n)\cdot w)^*(\cosh(t/n)+ \sinh(t/n)\cdot w)
u.$$
Using the well-known estimate 
$$\left|\prod_{r=1}^q a_r-\prod_{r=1}^q b_r\right|\le\sum_{r=1}^q|a_r-b_r| 
\quad\quad\text{for }\, a_r,b_r\in \{z\in\mathbb C:\> |z|=1\},$$
we obtain
\begin{align*}
C:=\,&\,|\phi_{n\lambda-i\rho}^p(t/n)-\widetilde\phi_\lambda^{\,p}(t)|\\
\leq\,&\,\sum_{r=1}^q \int_{U_0(q,\mathbb F)\times B_q} \Bigl| \Delta_r(g_{t/n}(u,w))^{in(
  \lambda_r-\lambda_{r+1})/2}\\
 &\qquad\qquad\qquad\qquad -
exp\left( i\cdot
 tr_r(u^*((\underline tw)^*+ \underline tw)u) \cdot( \lambda_r-\lambda_{r+1})/2\right)\Bigr|\,
dm_p(w)du.
\end{align*}
Further, by the inequality
$$|e^{ix}-e^{iy}|\le\sqrt 2 \cdot |x-y|
\quad\quad\text{for }\, x,y\in\mathbb R,$$
we obtain
$$C\leq\, \frac{1}{\sqrt 2}\sum_{r=1}^q| \lambda_r-\lambda_{r+1}|\cdot C_r$$
with
$$C_r:= \int_ {U_0(q,\mathbb F)\times B_q}\bigl| n \ln \Delta_r(g_{t/n}(u,w)) -
tr_r(u^*((\underline tw)^*+ \underline tw)u)\bigr|\, dm_p(w)du.$$
We now write
$g_{t/n}(u,w)=I+ A/n + H/n^2\,$
with $A:=u^*((tw)^*+tw)u\,$ and some Hermitian matrix $H=H(u,w,t,n)$ which
stays in a compact subset of $M_q$ for $(u,w,t,n)\in U_0(q, \mathbb F)\times B_q\times K\times \mathbb N.$
Therefore,
$$
 n \ln \Delta_r(g_{t/n}(u,w))=  n \ln \Delta_r(I+ A/n
 +H/n^2)=n\ln \bigl(1+tr_r(A)/n +h/n^2\bigr)$$
with some constant $\,h=h(u,w,t,n)\in \mathbb C\,$ which remains bounded for the arguments under consideration.
 Using the power series for
$\ln(1+z)$, we get
$$ n \ln \Delta_r(g_{t/n}(u,w)) - tr_r(A) =O(1/n)
\quad\quad\text{for }\,
n\to\infty,$$
uniformly in $u, w$ and $t\in K$. This yields the assertion.
\end{proof}

\begin{remarks}
\begin{enumerate}
\item[\rm{(1)}] Similar to the results in Section 4, Theorem \ref{vgl-Bessel}
  can be extended from $\lambda\in\mathbb R^q$ to $\lambda\in\mathbb C^q$ with
  suitable exponential bounds on the right side of the estimate.
\item[\rm{(2)}] We point out that one may also compare the integral representation for the
spherical
  functions of the symmetric spaces 
$GL(q, \mathbb F)/U(q, \mathbb F)$ in Section 3 with the 
integral representation for the spherical functions $\widetilde\psi_\lambda$
  of $(U(q,\mathbb F)\ltimes H_q(\mathbb F), U(q,\mathbb F))$, where  
 $U(q,\mathbb F)$ acts by conjugation on the space $H_q(\mathbb F)$ of
  all Hermitian $q\times q$-matrices. In this case, the methods of the preceding proof 
  lead to a
  result analogous to that of Theorem \ref{vgl-Bessel}. Moreover, for real spectral variables $\lambda$ it is possible to
 combine
  this result with  Theorems \ref{vgl-Bessel} and \ref{main-p-infty}(2), in order to
  obtain a convergence result for the Dunkl-type Bessel functions
  $\widetilde\phi_\lambda^{\,p}$ to  the functions $\widetilde\psi_\lambda$ for $p\to\infty$ with explicit
  error bounds, similar to Theorem \ref{main-p-infty}(2).
However, these results will be weaker than those which 
 were derived directly in  \cite{RV2}.
\end{enumerate}
\end{remarks}

\section{Appendix: On convex hulls of Weyl group orbits}

In this appendix we present a proof of Lemma \ref{conv-hull}.
We start with some general facts, where we assume that $R$
is a crystallographic root system of rank $q$ in a Euclidean vector space
$(V, \langle \,.\,\rangle)$ with Weyl group $W$. We fix a closed Weyl chamber
$C_q$ for $R$ and denote by $\alpha_1, \ldots, \alpha_q\subset
R$ the simple roots associated with $C_q\,.$ We further introduce the dual cone
\[ C_q^+ := \{ x\in V: \langle x, y\rangle \geq 0\}.\]
It is well-known (see e.g. Lemma IV.8.3. of \cite{Hel}) that for
each $x\in C_q^+,$
\begin{equation}\label{cone_section}
 co(W.x) \cap C_q = C_q \cap (x-C_q^+).
\end{equation}

\begin{lemma}\label{ball_inside} Suppose that $R$ is irreducible.\parskip=-1pt
\begin{enumerate}\itemsep=-1pt
 \item[\rm{(1)}] Let  $x,y\in C_q\setminus \{0\}.$ Then $\,\langle x,
y\rangle >0.$
\item[\rm{(2)}] There exists a constant $\epsilon_0 >0$ such that
 the ball $B_{\epsilon_0}(0) = \{ x\in V: \|x\|<\epsilon_0\}$ is contained in
$co(W.x)$ for each $x\in C_q$ with $\|x\|=1.$
\end{enumerate}
\end{lemma}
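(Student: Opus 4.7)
The plan is to prove (1) by direct computation in the fundamental-weight basis, using the classical positivity of the entries of the inverse Gram matrix of the simple roots for irreducible root systems, and then to derive (2) from (1) via the cone-duality identity \eqref{cone_section}, $W$-invariance, and a compactness argument.

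For (1), I would introduce the fundamental weights $\omega_1,\ldots,\omega_q \in V$ dual to the simple coroots $\alpha_j^\vee = 2\alpha_j/\langle\alpha_j,\alpha_j\rangle$, so that $C_q = \{\sum_i a_i\omega_i : a_i \ge 0\}$. Writing $x = \sum_i a_i\omega_i$ and $y = \sum_j b_j\omega_j$ with $a_i, b_j \ge 0$, the claim reduces to showing that $\langle\omega_i,\omega_j\rangle > 0$ for all $i,j$. A short computation, using $\langle\omega_i,\alpha_k\rangle = \delta_{ik}\|\alpha_k\|^2/2$, gives $\langle\omega_i,\omega_j\rangle = (\|\alpha_i\|^2\|\alpha_j\|^2/4)\cdot(G^{-1})_{ij}$, where $G = (\langle\alpha_i,\alpha_j\rangle)$ is the Gram matrix of the simple roots. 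Since $G$ is positive definite with non-positive off-diagonal entries (a Stieltjes matrix) and, by irreducibility of $R$, its support graph coincides with the connected Dynkin diagram, a standard Perron--Frobenius argument yields $(G^{-1})_{ij} > 0$ for all $i,j$. Since $x,y \neq 0$ forces at least one $a_i > 0$ and one $b_j > 0$, I conclude $\langle x,y\rangle = \sum_{i,j} a_i b_j \langle\omega_i,\omega_j\rangle > 0$.

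For (2), the key observation I would exploit is that both $B_{\epsilon_0}(0)$ and $co(W.x)$ are $W$-invariant, so the desired inclusion is equivalent to $B_{\epsilon_0}(0) \cap C_q \subset co(W.x) \cap C_q$. By \eqref{cone_section} (noting $x \in C_q \subset C_q^+$), the right-hand side equals $C_q \cap (x - C_q^+)$, so the task is to ensure $x - y \in C_q^+$ for every unit $x \in C_q$ and every $y \in B_{\epsilon_0}(0) \cap C_q$, i.e.\ $\langle x, z\rangle \ge \langle y, z\rangle$ for all $z \in C_q$. By Cauchy--Schwarz, $\langle y, z\rangle \le \epsilon_0\|z\|$, so it suffices to have $\langle x, z\rangle \ge \epsilon_0\|z\|$ uniformly in unit $x \in C_q$ and in $z \in C_q \setminus \{0\}$. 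By part (1), the continuous function $(u,v) \mapsto \langle u, v\rangle$ is strictly positive on the compact set $\{(u,v) \in C_q \times C_q : \|u\| = \|v\| = 1\}$, hence attains some positive minimum $m > 0$, and any $\epsilon_0 \in (0, m]$ finishes the proof.

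The only nontrivial ingredient is the strict positivity of the entries of $G^{-1}$ in part (1); once that is in hand, the remaining steps are elementary linear algebra in the fundamental-weight basis, the cone-duality identity \eqref{cone_section}, and a standard compactness argument on the product of $C_q$ with itself.
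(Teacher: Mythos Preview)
Your proposal is correct and follows essentially the same approach as the paper: part~(1) via positivity of $\langle\omega_i,\omega_j\rangle$ for fundamental weights (the paper simply cites Humphreys, Section~13, whereas you supply the Stieltjes/Perron--Frobenius justification), and part~(2) via the compactness argument on the unit sphere in $C_q$ combined with \eqref{cone_section} and $W$-invariance. The arguments coincide in all substantive respects; your version is just slightly more detailed.
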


\begin{proof} (1)
 Let $\lambda_1, \ldots, \lambda_q\in V$ denote the fundamental weights
associated with $\alpha_1, \ldots, \alpha_q$, defined by $\langle\lambda_j,
\alpha_i^\vee \rangle = \delta_{ij}$ with $\,\alpha_i^\vee = 2\alpha_i/\langle
\alpha_i, \alpha_i \rangle.$ Then both $x$ and $y$  can be written as linear
combinations of the $\lambda_i$ with non-negative coefficients (see \cite{Hu1},
Section 13.1). By our assumption on $R$ and Section 13 of \cite{Hu1}, 
the weights $\lambda_i$ satisfy
$\langle \lambda_i, \lambda_j \rangle >0$ for all $i,j$. We therefore obtain
that $\langle x, y\rangle >0.$

(2) Let $C_q^1 := \{ x\in C_q:\|x\|=1\}$ and consider the continuous mapping
$\,(x,y)\mapsto \langle x,y \rangle$  on the compact set
$C_q^1\times C_q^1$. By part (1), there exists some $\epsilon_0 >0$ such that
\[\langle x,y\rangle >\epsilon_0 \quad \text{for all }\,x, y\in C_q^1.\]
Now fix $x\in C_q^1\,.$
We claim that $\, B_{\epsilon_0}(0) \subseteq co(W.x).$
For this, let $\,z\in B_{\epsilon_0}(0) \cap C_q\,.$ Then for each $y\in C_q^1,$
we have
\[ \langle z,y\rangle < \epsilon < \langle x,y \rangle .\]
This shows that $\, x-z\in C_q^+$ and  $z\in x-C_q^+.$ In view of
\eqref{cone_section}, we thus obtain
\[ B_{\epsilon_0}(0) \cap C_q \subseteq \,co(W.x)\cap C_q.\]
The claim is now immediate.
\end{proof}

We now fix some $\rho\in C_q $ and  consider the compact convex set
\[ K := co(W.\rho)\cap C_q\,.\]
We collect some simple facts on the extreme points of
$K$.

\begin{lemma}\label{basic-convexity} 
\begin{enumerate}\itemsep=-1pt

\item[\rm{(1)}] The topological boundary $\partial C_q$ of $C_q$ is
  contained in the union of the reflecting hyperplanes
  $H_{\alpha_1},\ldots,H_{\alpha_q}  $ associated with  the simple reflections
 $\sigma_{\alpha_1},\ldots, \sigma_{\alpha_2}$, and  $C_q$
 is the intersection of  $q$ closed half-spaces.
\item[\rm{(2)}]  The closed cone $\rho-C^+_q$ is also the intersection of
  $q$ closed  half-spaces corresponding to  hyperplanes 
$H^+_{1},\ldots,H^+_{q}$.
\item[\rm{(3)}] $K$ is a compact convex polytope which is obtained as the
intersection
  of $2q$ closed  half-spaces. Moreover, if $x$ is an extreme point of
  $K\,,$ then $x=0$, $x=\rho$, or $x\in  \partial C_q\cap
  \partial(co(W.\rho))$.
 \item[\rm{(4)}] If $x\in K$ is an extreme point different from $0$ and
$\rho$, then
   there exists $\,k\in\{1,\ldots,q-1\}$ such that $x$ is contained in the
   $q$-fold intersection of $k$   hyperplanes $H_{\alpha_j}$ and $q-k$
 hyperplanes $H_l^+$.
\end{enumerate}
\end{lemma}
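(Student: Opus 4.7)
The plan is to derive all four parts from the polyhedral identity $K = C_q \cap (\rho - C_q^+)$ provided by (6.1). For (1), one writes
$$C_q = \bigcap_{i=1}^q \{x \in V : \langle x, \alpha_i\rangle \geq 0\},$$
an intersection of $q$ closed half-spaces whose bounding hyperplanes are precisely the reflecting hyperplanes $H_{\alpha_i}$ of the simple reflections; this is a standard consequence of the theory of simple systems (cf.~\cite{Hu1}, \S1.12). For (2), the cone $C_q$ is the nonnegative span of the fundamental weights $\lambda_1,\ldots,\lambda_q$, so its dual cone equals $C_q^+ = \sum_i \mathbb{R}_{\geq 0}\alpha_i$, which in turn admits the dual description $C_q^+ = \{x : \langle x,\lambda_i\rangle \geq 0,\ i=1,\ldots,q\}$. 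Translating by $\rho$ then presents $\rho - C_q^+$ as the intersection of the $q$ half-spaces bounded by the hyperplanes $H_l^+ := \{x : \langle x, \lambda_l\rangle = \langle \rho, \lambda_l\rangle\}$.

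Combining (1), (2) and (6.1), the set $K$ is cut out by these $2q$ half-spaces and is therefore a compact convex polytope in $V$. For (3) I would invoke Minkowski's theorem: every extreme point of $K$ is a vertex, and since $\dim V = q$, such a vertex lies on at least $q$ of the $2q$ bounding hyperplanes. If all $q$ of these are walls $H_{\alpha_i}$, then $x \in \bigcap_i H_{\alpha_i} = \{0\}$ (as the $\alpha_i$ form a basis). If all $q$ are of type $H_l^+$, then $\langle x, \lambda_l\rangle = \langle \rho, \lambda_l\rangle$ for every $l$, and since the $\lambda_l$ also form a basis this forces $x = \rho$. In every remaining case the vertex lies on at least one $H_{\alpha_i}$, hence in $\partial C_q$, and on at least one $H_l^+$; the latter condition places $x$ on $\partial(\rho - C_q^+)$, and since $co(W.\rho) \subseteq \rho - C_q^+$ for $\rho \in C_q$ (a consequence of the classical fact that $\rho - w\rho$ is a nonnegative combination of positive roots for every $w \in W$), this in turn gives $x \in \partial(co(W.\rho))$.

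Part (4) is then immediate from the same count: if the vertex is distinct from $0$ and $\rho$, the subfamily of $q$ bounding hyperplanes through $x$ contains at least one $H_{\alpha_j}$ and at least one $H_l^+$, so consists of exactly $k$ walls $H_{\alpha_j}$ and $q-k$ walls $H_l^+$ with $1 \leq k \leq q-1$. There is no serious mathematical obstacle here; everything is routine polyhedral combinatorics once the dual descriptions of (1) and (2) are in place. The only point requiring a modest verification is that $K$ is genuinely $q$-dimensional, so that vertices really do correspond to $q$-fold intersections of bounding hyperplanes; this is automatic in the case $\rho$ in the interior of $C_q$ (which is the only case needed in the application to Lemma~\ref{conv-hull}), since then $K$ contains the segment $[0,\rho]$ together with a small $C_q$-neighborhood of any interior point.
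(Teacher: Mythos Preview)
Your argument is correct and follows essentially the same route as the paper, reducing everything to the polyhedral identity $K = C_q \cap (\rho - C_q^+)$ from \eqref{cone_section}. The paper's treatment of (3) is phrased slightly differently (arguing that an extreme point in the interior of $C_q$ must be the apex $\rho$ of $\rho - C_q^+$, and symmetrically for $0$), but your vertex--hyperplane count achieves the same end and in fact supplies more detail, including the passage from $\partial(\rho - C_q^+)$ to $\partial(co(W.\rho))$, which the paper leaves implicit.
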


\begin{proof} \begin{enumerate}\itemsep=-1pt
\item[\rm{(1)}] See Section 10.1 of \cite{Hu1}.
\item[\rm{(2)}] This follows from (1) and the definition of the dual cone.
\item[\rm{(3)}] The first statement is clear by (1), (2) and
  (\ref{cone_section}).
For the second statement, consider some extreme point  $x$ of
 $K=C_q\cap (\rho-C^+_q)$. If
  $x$ is contained in the interior of $C_q$, then it is easily checked that
   $x$ has to be an extreme point of the cone $\rho-C^+_q\,$ which implies
  $x=\rho$. Moreover, if $x$ is contained in the interior of $\rho-C^+_q$ then
  by the same reasons,  $x$ has to be an extreme point of $C_q$ and hence
  $x=0$.This yields the assertion.
\item[\rm{(4)}] This follows from (3).
\end{enumerate}\end{proof}

\begin{lemma}\label{combine}
 Let $W_1,W_2$ be reflection groups acting on $V_1$ and $V_2$
respectively. Let $\rho_i\in V_i$ and
$a_i\in co(W_i.\rho_i)$ for $ i= 1,2.$ Then
$(a_1,a_2)\in V_1\times V_2\,$ satisfies
$(a_1,a_2)\in co((W_1\times W_2)(\rho_1,\rho_2))$.
 \end{lemma}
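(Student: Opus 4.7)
The plan is straightforward: reduce to the elementary fact that in a product of vector spaces, $co(A) \times co(B) \subseteq co(A \times B)$ for any subsets $A \subseteq V_1$, $B \subseteq V_2$. Applied with $A = W_1.\rho_1$ and $B = W_2.\rho_2$, this gives the assertion, since the orbit of $(\rho_1,\rho_2)$ under the product action of $W_1 \times W_2$ on $V_1 \oplus V_2$ is precisely $W_1.\rho_1 \times W_2.\rho_2$.

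Concretely, I would argue as follows. By assumption there exist finite convex representations
\[
a_1 = \sum_{i} \lambda_i\, w_i^{(1)}\rho_1, \qquad a_2 = \sum_{j} \mu_j\, w_j^{(2)}\rho_2,
\]
with $w_i^{(1)} \in W_1$, $w_j^{(2)} \in W_2$, coefficients $\lambda_i,\mu_j \geq 0$, and $\sum_i \lambda_i = \sum_j \mu_j = 1$. Then the products $\lambda_i\mu_j$ are nonnegative and sum to $1$, and
\[
\sum_{i,j} \lambda_i\mu_j\bigl(w_i^{(1)}\rho_1,\, w_j^{(2)}\rho_2\bigr) \,=\, \Bigl(\sum_i \lambda_i w_i^{(1)}\rho_1,\ \sum_j \mu_j w_j^{(2)}\rho_2\Bigr) \,=\, (a_1,a_2),
\]
where we used that the first component sums $\mu_j$ over $j$ to $1$ and symmetrically for the second. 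Since $(w_i^{(1)}\rho_1, w_j^{(2)}\rho_2) = (w_i^{(1)},w_j^{(2)}).(\rho_1,\rho_2)$ lies in the orbit $(W_1 \times W_2).(\rho_1,\rho_2)$, this displays $(a_1,a_2)$ as a convex combination of orbit points, which is exactly what we needed.

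There is really no obstacle here: the entire content is the observation that the orbit of the product point is the product of the orbits, together with the tensor-product trick for convex combinations. No structural properties of reflection groups are used, so the statement would in fact hold for arbitrary group actions on vector spaces.
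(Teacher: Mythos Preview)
Your proof is correct and follows essentially the same approach as the paper: both write each $a_i$ as a convex combination over the orbit $W_i.\rho_i$ and then use the product coefficients $\lambda_i\mu_j$ to express $(a_1,a_2)$ as a convex combination over the product orbit. Your additional remark that no reflection-group structure is needed is correct and worth noting.
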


\begin{proof}  For $i=1,2$, we have $\,a_i=\sum_{w_i\in W_i} \lambda_{w_i}^i
  w_i\rho_i\,$ with $\,\lambda_{w_i}^i\ge0$ and $\sum_{w_i\in W_i}
\lambda_{w_i}^i =1$.
Therefore,
$$(a_1,a_2)=\sum_{w_1\in W_1}\sum_{w_2\in
  W_2}\lambda_{w_1}^1\lambda_{w_2}^2 \cdot(w_1\rho_1, w_2\rho_2)$$
as claimed.
\end{proof}

We finally turn to the proof of Lemma \ref{conv-hull}.
As for Weyl groups of type $B$ the mapping $x\mapsto -x$ on $\mathbb R^q$
corresponds
to the action of some Weyl group element, Lemma \ref{conv-hull} is a
 consequence of part (1) of the following  result.

\begin{proposition}\label{contained}
Consider a root system $R$ of rank $q$ in a Euclidean space $V$  with
reflection group $W\subset O(V)$ and
a fixed closed Weyl chamber $C_q$ in one of the  following cases:
\begin{enumerate}\itemsep=-1pt
\item[\rm{(1)}]  $R=B_{q}$ and $V=\mathbb R^q$, or
\item[\rm{(2)}] 
$R=A_{q}$ and the symmetric group  $W=S_{q+1}$ acts either on 
  $V=\mathbb R^{q+1}$ or  $V=(1, \ldots, 1)^\perp\subset \mathbb R^{q+1}\,$ in
  a non-effective or effective  way.
\end{enumerate}
Then there exists some $\epsilon_0>0$ (depending on $R$) 
such that for all $\,0\le \epsilon\le \epsilon_0$, $\rho\in C_q$, and  $y\in
co(W.\rho)\cap C_q$,
$$(1+\epsilon)y-\epsilon\rho\in co(W.\rho).$$
\end{proposition}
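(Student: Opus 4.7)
The plan is to verify the conclusion at every extreme point of the compact convex polytope $K := co(W.\rho) \cap C_q$ from Lemma~\ref{basic-convexity}(3), and then extend by affine linearity: the map $\phi_\epsilon : y \mapsto (1+\epsilon)y - \epsilon\rho$ is affine, every point of $K$ is a convex combination of its extreme points, and $co(W.\rho)$ is convex, so once $\phi_\epsilon(y_{\mathrm{ext}}) \in co(W.\rho)$ holds at each extreme point it holds on all of $K$. The case $y = \rho$ is trivial. For any other extreme point $y$ I would identify $y$ as the average of $\rho$ under a parabolic subgroup of $W$ and then exhibit $\phi_\epsilon(y)$ as an explicit convex combination of points of $W.\rho$.

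For such a $y$, introduce the active chamber indices $A := \{i : \langle y, \alpha_i\rangle = 0\}$ and the active cone indices $B := \{l : \langle y, \lambda_l\rangle = \langle \rho, \lambda_l\rangle\}$, where $\lambda_l$ are the fundamental weights defining the facets of $\rho - C_q^+$. Writing $\rho - y = \sum_j c_j \alpha_j$ with $c_j \ge 0$, the condition $l \in B$ translates into $c_l = 0$. The key combinatorial step is $A \cap B = \emptyset$: if $i \in A \cap B$, then $c_i = 0$ and
$$0 = \langle y, \alpha_i\rangle = \langle \rho, \alpha_i\rangle - \sum_{j \ne i} c_j \langle \alpha_j, \alpha_i\rangle$$
gives a contradiction, since $\langle \rho, \alpha_i\rangle > 0$ (as $\rho$ lies in the interior of $C_q$) while each $\langle \alpha_j, \alpha_i\rangle \le 0$ for $j \ne i$ by the crystallographic condition on simple roots. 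At an extreme point of the $q$-dimensional polytope $K$ there must be at least $q$ linearly independent active constraints, so the disjointness forces $|A| + |B| = q$ and $A = \{1,\ldots,q\} \setminus B$.

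Consequently $y - \rho \in \mathrm{span}\{\alpha_i : i \in A\}$ while $y \in \mathrm{fix}(W_A)$, where $W_A := \langle \sigma_{\alpha_i} : i \in A\rangle$; these two subspaces are orthogonal complements, so $y$ is the orthogonal projection of $\rho$ onto $\mathrm{fix}(W_A)$, which by group-averaging equals $y = |W_A|^{-1}\sum_{w \in W_A} w\rho$. Substituting and separating out the term $w = e$ then yields
$$(1+\epsilon)y - \epsilon\rho = \frac{1 - \epsilon(|W_A|-1)}{|W_A|}\,\rho + \frac{1+\epsilon}{|W_A|}\sum_{w \in W_A \setminus \{e\}} w\rho,$$
which is a convex combination of points of $W.\rho$ for every $0 \le \epsilon \le (|W_A|-1)^{-1}$. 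The same formula also handles the extreme point $y = 0$ (corresponding to $A = \{1,\ldots,q\}$, $W_A = W$): for $R = B_q$ the required identity $|W|^{-1}\sum_w w\rho = 0$ holds because $-\mathrm{id} \in W$, and for effective $A_q$ it holds because irreducibility makes the only $W$-fixed vector $0$; the non-effective $A_q$ action on $\mathbb R^{q+1}$ reduces to the effective case by splitting off the $W$-fixed diagonal line $\mathbb R(1,\ldots,1)$ and invoking Lemma~\ref{combine}. Taking $\epsilon_0 := (|W|-1)^{-1}$ works uniformly.

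The main obstacle is the disjointness $A \cap B = \emptyset$ and the resulting identification of $y$ with a Weyl-subgroup average of $\rho$: this uses crucially both the crystallographic inequality $\langle \alpha_i, \alpha_j\rangle \le 0$ for distinct simple roots and the interiority of $\rho$. For $\rho$ on a wall of $C_q$ one would need to first project to the effective subspace for the stabilizer of $\rho$, but the intended application via Lemma~\ref{conv-hull} only requires the interior case.
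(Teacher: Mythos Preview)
Your argument is correct and takes a genuinely different route from the paper's proof. Both proofs begin by reducing to the extreme points of $K = co(W.\rho)\cap C_q$ via affine linearity of $\phi_\epsilon$, and both restrict (as the paper also does, by a continuity argument) to $\rho$ in the interior of $C_q$. From there the approaches diverge. The paper proceeds by induction on the rank: for each extreme point $x_0$ of $K$ other than $0$ and $\rho$ it identifies an affine subspace $H$ through $\rho$ containing $x_0$ on which a proper parabolic subgroup acts, and reduces the statement for $x_0$ to the proposition for a root system of lower rank (using Lemma~\ref{combine} to recombine factors); the case $x_0=0$ is handled separately via Lemma~\ref{ball_inside}. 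Your argument instead pins down each extreme point directly: the disjointness $A\cap B=\emptyset$ together with the vertex condition forces $B=\{1,\dots,q\}\setminus A$, whence $\rho - y\in\mathrm{span}\{\alpha_i:i\in A\}$ and $y\in\mathrm{fix}(W_A)$, so $y$ is the orthogonal projection of $\rho$ onto $\mathrm{fix}(W_A)$ and hence the $W_A$-average of $\rho$. The resulting explicit convex combination
\[
(1+\epsilon)y-\epsilon\rho \,=\, \frac{1-\epsilon(|W_A|-1)}{|W_A|}\,\rho \,+\, \frac{1+\epsilon}{|W_A|}\sum_{w\in W_A\setminus\{e\}} w\rho
\]
dispenses with the induction entirely and yields the explicit uniform bound $\epsilon_0=(|W|-1)^{-1}$, whereas the paper's $\epsilon_0$ is only implicitly determined by the recursion. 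Your approach is shorter and more transparent; it also makes clear that the proposition holds for any irreducible crystallographic root system acting effectively (since then the $W$-average of $\rho$ is $0$), not only for types $A$ and $B$. The paper's inductive scheme, by contrast, reveals more of the facial structure of $K$ and its relation to lower-rank subsystems, which might be useful if one wanted sharper type-dependent estimates of $\epsilon_0$ (for instance, its stated optimal values $\epsilon_0=1$ for $B_2$ and $\epsilon_0=1/2$ for $A_2$ exceed the uniform $(|W|-1)^{-1}$).
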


Notice that for fixed $y$, the point $\,(1+\epsilon)y-\epsilon\rho = y+\epsilon(y-\rho)$
is opposite to $\rho$ with respect to $y$ on the line through $y$ and $\rho$,
with distance $\,\epsilon \|y-\rho\|$ from $y$. 
In case $\epsilon=1$, it is obtained from $\rho$ by 
reflection in $y$. 

\smallskip

For the root systems $A_1, B_1$ and $B_2$ the
maximal parameter  is $\epsilon_0 = 1$ while in  the reduced $A_2$-case
the maximal parameter is  $\epsilon_0= 1/2.$  In fact, the cases  $A_1, B_1$ are trivial, while the cases  $A_2$,
$B_2$
follow easily from the following diagrams:

\noindent
\begin{minipage}[c]{1.00\textwidth}
\centering
	\includegraphics[width=0.80\textwidth]{./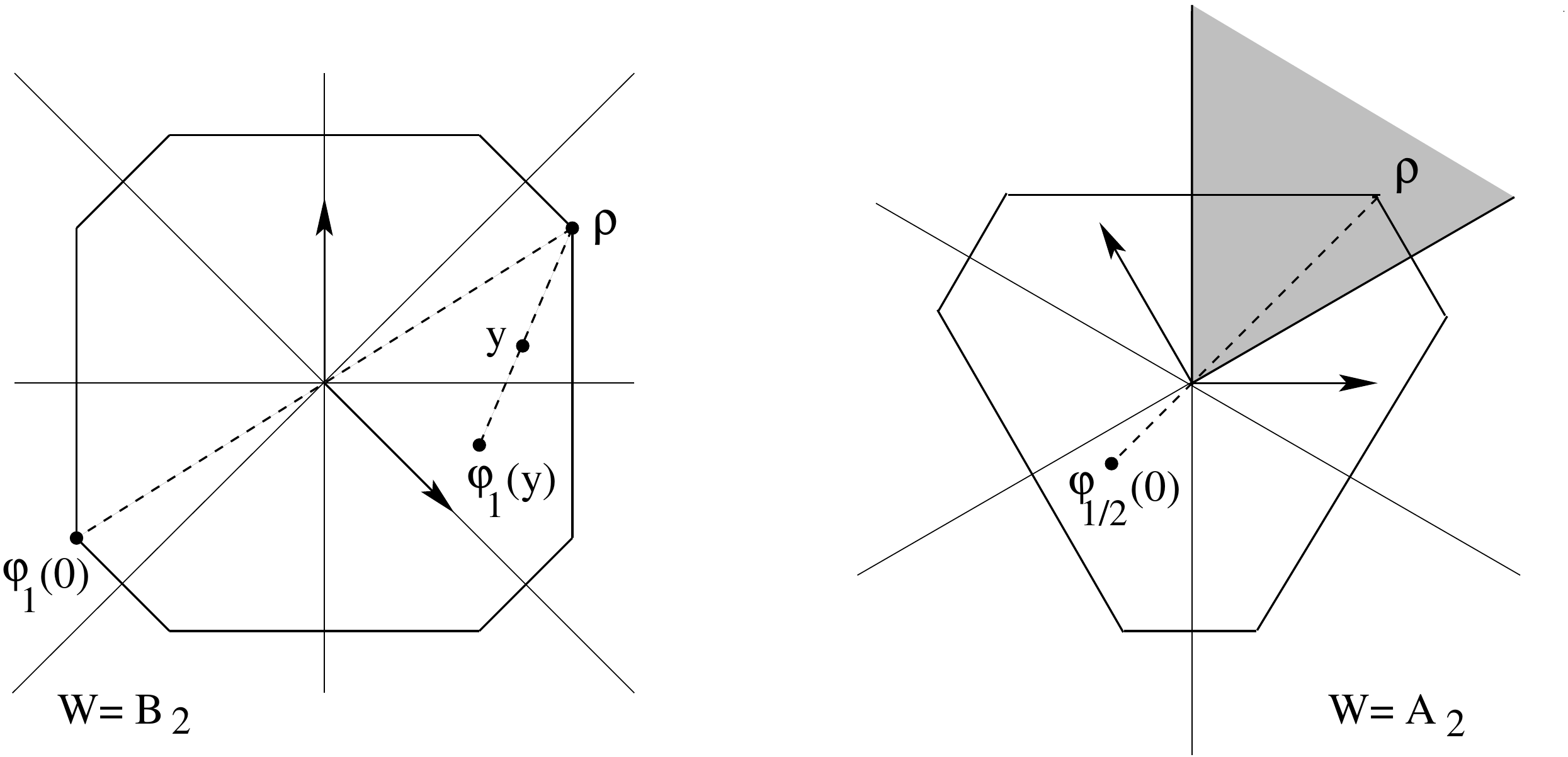}

 \end{minipage}

\vspace{13pt}

\begin{proof}[Proof of Proposition \ref{contained}]
For the proof of the general case, we fix $\rho\in C_q$ 
and
consider 
\[K:=co(W.\rho)\cap C_q\]
 as well as for $\epsilon>0$, its image
$K_\epsilon:=\phi_\epsilon(K)\,$ under the affine mapping
\[\phi_\epsilon:y\mapsto
(1+\epsilon)y-\epsilon \rho\,.\]
Clearly, $K_\epsilon$ is again compact and convex, and  $\phi_\epsilon$
maps
extreme points of $K$ onto extreme point of  $K_\epsilon$. For the
proof of
Proposition \ref{contained} it suffices to prove that extreme
points of $K$ are mapped to points in $co(W.\rho)$ for
 $\epsilon\in[0,\epsilon_0]$ with $\epsilon_0>0 $ sufficiently small.
 For the proof of this statement, we  may assume 
that in addition $\|\rho\|_2=1$ holds, and that, by a continuity argument, 
 $\rho$ is contained in the interior of $C_q$.

We  prove Proposition \ref{contained} by induction on $q$
first for the $A_q$-cases  and then for  $B_q$,
 where  the $A$-cases are used. The  proposition is clear for
 $A_1$ and $B_1$.
Let  $y\in K$ be an  extreme point.
By Lemma \ref{basic-convexity}(4), we have 3 cases of extreme points:
\smallskip

If $y=\rho$, then $\phi_{\epsilon}(\rho)=\rho$, and the claimed statement
 is trivial.

Moreover, if $y=0$, then $\phi_{\epsilon}(0)=-\epsilon\rho$, and the statement
follows in all cases with $\epsilon_0>0$ as in Lemma
\ref{ball_inside}(2).

\medskip
We now turn to the third case. Assume first that  
$S_{q+1}$ acts on the vector space $V_q:=(1,\ldots,1)^\perp\subset
\mathbb R^{q+1}$ where
 $C_q$ is the closed Weyl chamber associated with the simple roots
$$\alpha_1:=e_1-e_2,\alpha_2:=e_2-e_3,\ldots,  \alpha_q:=e_{q}-e_{q+1},$$
and $e_1,\ldots,e_{q+1}$ is the standard basis of  $\mathbb R^{q+1}$.
We first study the extreme point $x_0\in C_q\cap  co(W.\rho)$
 contained
 in the intersection of the hyperplanes
$H_{\alpha_1},\ldots, H_{\alpha_{q-1}}\subset V_q$ and the  hyperplane 
$$H:=\{x\in V_q:\> \langle x,e_{q+1}\rangle=\langle
\rho,e_{q+1}\rangle\}$$
which contains the $q$ affinely independent points 
$\rho, \sigma_{\alpha_1}(\rho),\ldots,  \sigma_{\alpha_{q-1}}(\rho)$ (notice
that $\rho$ is in the interior of $C_q$). 
We observe that $S_q$ as a subgroup of $S_{q+1}$ acts on  $H$ by
permutations of the first $q$ components.
We now identify $H$ with the vector space 
$V_{q-1}\subset \mathbb R^q$ via the affine mapping
$$(x_1,\ldots, x_q,\rho_{q+1})\mapsto 
(x_1-\rho_{q+1}/q,\ldots,x_q- \rho_{q+1}/q ).$$
In terms of this identification, the action of  $S_q$ on $H$ is just
 the usual action of 
 $S_q$ on $V_{q-1}$ with the simple reflections 
$ \sigma_{\alpha_1},\ldots,  \sigma_{\alpha_{q-1}}$.  
We now  regard the points
$\rho, x_0, \phi_\epsilon(x_0), \sigma_{\alpha_1}(\rho),$ $\ldots,
 \sigma_{\alpha_{q-1}}(\rho)\in H$
 as points of  $V_{q-1}$ and may 
apply the assumption in the induction for $A_{q-1}$.  This shows that
 $\phi_{\epsilon_0}(x_0)$ is contained in $co(S_q.\rho)\subset
co(S_{q+1}.\rho)$
 for $\epsilon_0>0$
sufficiently small. 
This proves the claim for this extreme point $x_0$.

The case of the  extreme point  in the intersection of 
$H_{\alpha_2},\ldots, H_{\alpha_{q}}$ and the corresponding hyperplane 
 $H$ containing the $q$ points 
$\rho, \sigma_{\alpha_2}(\rho),\ldots,  \sigma_{\alpha_{q}}(\rho)$ can be
handled in the same way.

For the next  type of an extreme point, we fix $k=2,\ldots, q-1$ and define
$$S:=\rho_1+\ldots+\rho_k=-(\rho_{k+1}+\ldots+\rho_{q+1}).$$
We now consider the 
   extreme point
$x_0$ which is contained  in the intersection of the hyperplanes
$H_{\alpha_1},\ldots,H_{\alpha_{k-1}},H_{\alpha_{k+1}},\ldots,  H_{\alpha_{q}}$
and the
  hyperplane 
$$ H:=\{(x_1,\ldots,x_{q+1})\in \mathbb R^{q+1}: \> x_1+\ldots+x_k=S, 
\> x_{k+1}+\ldots+x_{q+1}=-S\}\subset V_{q}.$$
$H$ contains the affinely independent $q$ points 
$\rho, \sigma_{\alpha_1}(\rho),\ldots,  \sigma_{\alpha_{k-1}}(\rho), 
\sigma_{\alpha_{k+1}}(\rho), \ldots, \sigma_{\alpha_{q}}(\rho)$.
We write $H$ as $H:= H_1\times H_2$ with
$ H_1:=\{(x_1,\ldots,x_{k})\in \mathbb R^{k}: \> x_1+\ldots+x_k=S\}$ and
$ H_2:=\{(x_{k+1},\ldots,x_{q+1})\in \mathbb R^{q+1-k}: \> 
x_{k+1}+\ldots+x_{q+1}=-S\}$
 where the group $S_k\times S_{q+1-k}$ as a subgroup of $ S_{q+1}$ acts on $H$.
 We now identify $H_1$ with $V_{k-1}\subset\mathbb R^k$ 
 via the affine mapping
$$p_1:(x_1,\ldots, x_k)\mapsto 
(x_1-S/k,\ldots,x_k- S/k ),$$
and $H_2$ with $V_{q-k}\subset\mathbb R^{q+1-k}$ via
$$p_2:(x_{k+1},\ldots,x_{q+1})\mapsto 
(x_{k+1}+S/(q+1-k),\ldots,x_{q+1}+ S/(q+1-k) ).$$
In terms of this identification of $H$ with $V_{k-1}\times V_{q-k}$,
 the action of  $S_k\times S_{q+1-k}$ above  on $H$ is just
 the usual action of   $S_k\times S_{q+1-k}$ on  $V_{k-1}\times V_{q-k}$.
 We now consider the  Weyl chamber $C_{k-1}\subset V_{k-1} $
associated with the reflections $\sigma_{\alpha_1},\ldots,\sigma_{\alpha_{k-1}}$.
We see that $p_1(\rho)\in C_{k-1}$,
and that the points $$p_1(\rho), p_1(x_0), p_1(\phi_\epsilon(x_0)),
 \sigma_{\alpha_1}(p_1(\rho)),\ldots,
 \sigma_{\alpha_{k-1}}(p_1(\rho))\in V_{k-1} $$ are related in a way such
 that we may apply  the  induction assumption  for $A_{k-1}$. We conclude that
  $p_1(\phi_{\epsilon}(x_0))$ is contained in $co(S_{k}.p_1(\rho))$ for
 sufficiently small $\epsilon>0$.
In the same way, 
 $p_2(\phi_{\epsilon_0}(x_0))\in co(S_{q+1-k}.p_2(\rho))$ for
 sufficiently small $\epsilon>0$.
In view of Lemma \ref{combine} we conclude that there exists some
$\epsilon_0>0$ such that $\phi_{\epsilon}(x_0)\in
 co\bigl((S_k\times S_{q+1-k}).\rho\bigr)\subset
 co(S_{q+1}.\rho)$ for $0\leq \epsilon \leq\epsilon_0$ as
claimed.

\smallskip
We next study the  extreme points $x_0$ with the property that for some
 $k\in\{1,\ldots,q-1\} $, the point $x_0$ is contained in 
the $k$ reflecting hyperplanes
$H_{\alpha_{j_1}},\ldots,H_{\alpha_{j_k}}$ with $1\le j_1<\ldots <j_k\le q+1$
as well as in the $k$-dimensional affine subspace $H\subset V_q$ 
which
is spanned by the $k+1$ affinely independent points
 $\rho,\sigma_{\alpha_{j_1}}(\rho),\ldots,\sigma_{\alpha_{j_k}}(\rho)$. As in
the preceding case, we split the problem into several 
lower dimensional problems which can be handled 
separately by induction.
Again, by Lemma \ref{combine} we obtain some $\epsilon_0>0$ such that 
 $\phi_{\epsilon_0}(x_0)\in co(S_{q+1}.\rho)$ for $\epsilon \leq \epsilon_0.$
This completes the proof for the effective $A_q$-case, where $S_{q+1}$ acts on
$V_q$. This result implies also immediately the non-effective $A_q$-case,
 where $S_{q+1}$ acts on
$\mathbb R^{q+1}$.

\smallskip
We finally consider the case $B_q$ for $q>1$. We assume that $C_q$ is the
Weyl chamber associated with the simple roots
$$\alpha_1:=e_1-e_2,\alpha_2:=e_2-e_3,\ldots,  \alpha_{q-1}:=e_{q-1}-e_{q},
\alpha_q=e_q.$$
We here immediately study the general case where 
 for some
 $k\in\{1,\ldots,q-1\} $, the extreme point $x_0$ is contained in 
the $k$ reflecting hyperplanes
$H_{\alpha_{j_1}},\ldots,H_{\alpha_{j_k}}$ with $1\le j_1<\ldots <j_k\le q+1$
as well as in the affine subspace $H\subset \mathbb R^{q+1}$ of dimension $k$
which
is spanned by the $k+1$ points
 $\rho,\sigma_{\alpha_{j_1}}(\rho),\ldots,\sigma_{\alpha_{j_k}}(\rho)$.
As in
the  preceding case, we split the problem into several 
lower dimensional problems which can be handled either as a lower-dimensional
$B$-case  or as a known $A$-case. The proof is
again completed by induction and by use of
 Lemma \ref{combine}.
\end{proof}

\end{document}